\documentclass[11pt,a4paper]{report}
\usepackage{warwickthesis}
\usepackage{setspace} %! setspace is used to control linepacing
\usepackage[square]{natbib} %! needed for Harvard style of references
\usepackage{
amsmath,
amsthm,
amssymb,
amsfonts,
datetime,
enumerate,
graphicx,
mathtools,
listings,
verbatim
}

\usepackage{tikz-cd}
\usepackage{hyperref}

\newtheorem{theorem}{Theorem}[chapter]
\newtheorem{lemma}[theorem]{Lemma}
\newtheorem{corollary}[theorem]{Corollary}
\newtheorem{proposition}[theorem]{Proposition}
\theoremstyle{definition}
\newtheorem{definition}[theorem]{Definition}

 \leftchapter                       %% Uncomment one of these if you want
\newcommand{\mydegree}{MSc}

\renewcommand{\thesistitle}{Standard Representations of Crystallographic Coxeter Groups}       %% The title of your thesis; use mixed-case.

\renewcommand{\thesisauthor} {Emma Booth}    %% Your name
\renewcommand{\thesisauthorpreviousdegrees}{....}  %% Your previous degrees, abbreviated; separate multiple degrees by commas; comment out if not required

\renewcommand{\thesisdegree}{%
  \ifthenelse{\equal{\mydegree}{MSc}}{Master of Science in Mathematics}{%
  \ifthenelse{\equal{\mydegree}{MScY1}}{Summer project for 2y MSc}{%
  {Doctor of Philosophy in Mathematics}
}}}

\renewcommand{\thesissubmission}{Submitted to the University of Warwick\\for the degree of}

\renewcommand{\thesisdepartmentname}{Warwick Mathematics Institute}

\renewcommand{\thesismonth}{\monthname}
\renewcommand{\thesisyear}{\the\year}

\begin{document}

%% Generate the title page
\thesistitlepage

% Roman page numbering for contents, etc
\pagenumbering{roman}

% Doctoral College Guidance July 2024, Sec 2.5 and 2.6:
% Table of contents:
% The table of contents shall immediately follow the title page. It shall list in sequence, with page numbers, all relevant subdivisions of the thesis, including the titles of chapters, sections and subsections, as appropriate; the list of references; the bibliography; the list of abbreviations and other functional parts of the whole thesis together with any appendices. The table of contents should be followed by the list of illustrations and tables.
% If a thesis comprises more than one volume, the contents of the whole thesis shall be shown in the first volume and the contents of each subsequent volume in a separate contents list within that volume.
% Tables and Illustrated Material:
% The lists of tables and illustrations shall follow the table of contents but be placed before the acknowledgements and should include all tables, photographs, diagrams, etc., in the order in which they occur in the text.

\tableofcontents

\pagenumbering{arabic}

\chapter{Introduction}

In the existing literature on reflection groups, there are two distinct definitions of what it means for a Coxeter group $W$ to be crystallographic. These are:
\begin{itemize}
    \item $W$ arises as the Weyl group of a Kac-Moody algebra.
    \item The reflection representation of $W$ preserves a lattice. 
\end{itemize}

These definitions are not in fact equivalent. It is known that $W$ arises as the Weyl group of a Kac-Moody algebra if and only if the edge labels of its Coxeter graph are contained in $\left\{2,3,4,6,\infty\right\}$, and that this is a necessary condition for its reflection representation to preserve a lattice. 

However, there exist cases in which $W$ arises as the Weyl group of a Kac-Moody algebra but its reflection representation fails to preserve a lattice (see Chapter \ref{chap:3}). The Kac-Moody algebra gives rise to a Cartan representation of $W$, and this will always preserve a lattice, so in cases where the reflection representation does not these representations cannot be isomorphic. 

Further, there exist non-isomorphic Kac-Moody algebras with the same Weyl group $W$. The natural question is then whether these give rise to isomorphic Cartan representations. And given a Cartan representation, we can also investigate the question of whether its dual is also isomorphic to some Cartan representation, or when a Cartan representation is isomorphic to its own dual. 

Chapter \ref{chap:2} sets out the results and definitions from the literature that will be used throughout. Chapter \ref{chap:3} explores various small examples which illustrate key points. Chapter \ref{chap:4} sets out necessary and sufficient conditions on a Kac-Moody algebra for the reflection representation and Cartan representations of $W$ to be isomorphic. Chapter \ref{chap:5} classifies the Cartan representations of a given Coxeter group. Chapter \ref{chap:6} considers when a Cartan representation is isomorphic to its dual, or equivalently when it admits a non-degenerate $W$-invariant bilinear form. In Chapter \ref{chap:7} we consider the case when the representations are taken over fields of finite characteristic. And finally in Chapter \ref{chap:8} we summarise the key results proved. 

The primary source used is Carter's \emph{Lie Algebras of Finite and Affine Type} (\cite{Carter}). This sets out clearly how the Cartan representations and their Dynkin diagrams are defined and establishes useful results involving symmetrisability. This is supplemented in places by Kac's \emph{Infinite dimensional Lie algebras} (\cite{Kac}). 

We also make use of Humphreys' \emph{Reflection Groups and Coxeter Groups} (\cite{Humphreys}) for results on crystallographic Coxeter groups. And Vinberg's paper \emph{Discrete Linear Groups Generated By Reflections} (\cite{Vinberg}) serves as inspiration for key results as well as providing two useful definitions and their connection to each other. 

Chapter \ref{chap:2} consists almost entirely of definitions and theorems reproduced from the literature, as cited there. The only original element is the second proof of \ref{lem:symmcond}. Chapter \ref{chap:3} consists entirely of examples. 

The remaining chapters are largely original; we here note the exceptions to that. The main results of Chapters \ref{chap:4} and \ref{chap:5} are special cases of Vinberg's statement (\cite{Vinberg}) that "the Cartan matrix of [a representation of a Coxeter group generated by reflections] is determined up to transformation by a diagonal matrix". However the proofs given here are different and original.

In Chapter \ref{chap:6}, the definition of the dual representation and the following proposition (\ref{prop:isomorphiffform}) are commonly known. It also relies on theory of Kac-Moody algebras found in the literature (\cite{Carter}). And Chapter \ref{chap:7} is entirely original.

\chapter{Preliminaries}

\label{chap:2}

This dissertation relies on knowledge of the theory of Coxeter groups and of Kac-Moody algebras. We summarise the definitions and results needed, and reproduce proofs where appropriate, in this chapter. 
\section{Crystallographic Coxeter Groups}
\begin{definition} (\cite{Humphreys}, p.135) Let $W$ be a Coxeter group and $(V, \rho)$ its reflection representation. Then we say that $W$ is crystallographic if there exists a lattice $L$ in $V$ (i.e. a $\mathbb{Z}$-linear span of a basis of $V$) which is preserved by the action of $W$. \end{definition}

\begin{lemma}(\cite{Humphreys}, p.38) Let $W$ be a crystallographic Coxeter group and $g \in W$. Then $\mathrm{tr}(\rho(g))$ is an integer.\end{lemma}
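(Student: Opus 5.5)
Since $W$ is crystallographic, there is a lattice $L \subset V$, the $\mathbb{Z}$-span of some basis $\{v_1,\dots,v_n\}$ of $V$, with $\rho(g)L = L$ for every $g \in W$. The plan is to compute the trace of $\rho(g)$ in this basis rather than in the standard basis of simple roots, because in the lattice basis the matrix is forced to have integer entries.

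First, since $\rho(g)v_j \in L$ for each $j$, we can write $\rho(g)v_j = \sum_i a_{ij} v_i$ with all $a_{ij} \in \mathbb{Z}$. Hence the matrix $A = (a_{ij})$ of $\rho(g)$ with respect to $\{v_i\}$ lies in $M_n(\mathbb{Z})$. In fact $A \in GL_n(\mathbb{Z})$, because $\rho(g^{-1})$ also preserves $L$, but we do not need this.

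Second, the trace is a similarity invariant. If $P$ is the change-of-basis matrix from the standard basis of $V$ to $\{v_i\}$, then the standard matrix of $\rho(g)$ is $PAP^{-1}$, and
\[
\mathrm{tr}(\rho(g)) = \mathrm{tr}(PAP^{-1}) = \mathrm{tr}(A) = \sum_i a_{ii} \in \mathbb{Z}.
\]

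There is no real obstacle here. The only point to be careful about is that the lattice is the $\mathbb{Z}$-span of a basis of the whole of $V$, so that $\{v_i\}$ is genuinely a vector-space basis and the trace of $\rho(g)$ can be computed in it. This is exactly what the definition of crystallographic provides.
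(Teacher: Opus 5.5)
Your proposal is correct and follows essentially the same argument as the paper. You take a basis of $V$ generating a $W$-invariant lattice, note that $\rho(g)$ has an integer matrix in that basis, and use basis-independence of the trace. Your explicit similarity-invariance step is something the paper leaves implicit.
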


\begin{proof}Choose a lattice $L$ in $V$ which is preserved by the action of $W$. This lattice contains a basis of $V$, and with respect to this basis the matrix of $\rho(g)$ must have integer entries. So computing the trace in this basis, it must be an integer.\end{proof}

\begin{proposition} (\cite{Humphreys}, p.38, 135) Let $W$ be a crystallographic Coxeter group. Then the edge labels of the Coxeter graph of $W$ are all in $\left\{2,3,4,6,\infty\right\}$.\end{proposition}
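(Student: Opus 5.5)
The plan is to apply the preceding lemma (traces of elements of a crystallographic Coxeter group are integers) to the product of two simple reflections. Take two generators $s, t$ whose Coxeter graph edge carries a finite label $m = m(s,t)$; the label $\infty$ is already allowed, so only finite labels need treatment. Set $g = st$ and compute $\mathrm{tr}(\rho(g))$ explicitly in the reflection representation.

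Recall that $V$ has basis $\{\alpha_r\}$ indexed by the generators, with the bilinear form $B(\alpha_r,\alpha_{r'}) = -\cos(\pi/m(r,r'))$, and $\rho(r)(v) = v - 2B(\alpha_r,v)\alpha_r$. Split $V = V_{s,t} \oplus V_{s,t}'$, where $V_{s,t} = \mathrm{span}(\alpha_s,\alpha_t)$. When $m<\infty$, the form $B$ restricted to $V_{s,t}$ is positive definite, so $V'_{s,t} = V_{s,t}^{\perp}$ is a complement. This complement has dimension $n-2$, where $n = \dim V$.

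Both $\rho(s)$ and $\rho(t)$ fix $V_{s,t}^{\perp}$ pointwise. Therefore $\rho(st)$ acts as the identity there, contributing $n-2$ to the trace. On $V_{s,t}$, which is a Euclidean plane under $B$, the element $\rho(s)\rho(t)$ is a composition of two reflections in lines at angle $\pi/m$. It is therefore a rotation by $2\pi/m$ and has trace $2\cos(2\pi/m)$. Alternatively, one can compute directly in the basis $\alpha_s,\alpha_t$: the matrix of $\rho(st)$ has trace $4\cos^2(\pi/m) - 2 = 2\cos(2\pi/m)$.

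This gives
\[
\mathrm{tr}(\rho(st)) = n - 2 + 2\cos(2\pi/m).
\]
By the preceding lemma this is an integer, so $2\cos(2\pi/m)\in\mathbb{Z}$. Since $2\cos(2\pi/m)$ lies in $[-2,2]$, it must be one of $-2,-1,0,1,2$.

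The corresponding values of $2\pi/m$ in $(0,2\pi]$ are $\pi, 2\pi/3, \pi/2, \pi/3, 2\pi$, giving $m = 2,3,4,6,1$. The case $m=1$ does not occur for distinct generators, so $m\in\{2,3,4,6\}$, as required.

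The only real obstacle is justifying the trace decomposition. One needs that $V = V_{s,t}\oplus V_{s,t}^\perp$, which uses the nondegeneracy (indeed positive definiteness) of $B$ on $V_{s,t}$ for finite $m$. If one prefers to avoid this, the trace can instead be computed directly from the matrix of $\rho(s)\rho(t)$ in the basis $\{\alpha_r\}$. The diagonal entry at $\alpha_r$ for $r\neq s,t$ equals $1$ plus multiples of products involving $B(\alpha_s,\alpha_r)$ and $B(\alpha_t,\alpha_r)$ that appear only off the diagonal. The diagonal entries at $\alpha_s$ and $\alpha_t$ are $-1$ and $-1+4\cos^2(\pi/m)$. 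Summing gives the same formula without any complement argument, so I would present that direct computation as the cleanest route.
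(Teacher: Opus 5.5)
Your proposal is correct and follows the paper's argument: both view $\rho(st)$ as a rotation by $2\pi/m$ in the plane spanned by the two simple roots, compute $\mathrm{tr}(\rho(st)) = n-2+2\cos(2\pi/m)$, and apply integrality of traces; your version additionally justifies that the complementary $n-2$ directions contribute exactly $n-2$, which the paper leaves implicit. One cosmetic slip: for $\rho(s)\rho(t)$ the diagonal entry $-1+4\cos^2(\pi/m)$ sits at $\alpha_s$ and $-1$ sits at $\alpha_t$, not the other way round, but the sum is unaffected.
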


\begin{proof}Let $i, j$ be vertices in the Coxeter graph, $m_{ij}$ the label of the edge between them, $s_i, s_j$ the corresponding generators of $W$ and $e_i, e_j$ the corresponding vectors in the usual basis of $V$. Suppose $\dim V = n$. If $m_{ij} = \infty$ the result holds, so suppose otherwise.

Then $\rho(s_i s_j)$ is a rotation by an angle of $\frac{2 \pi}{m_{ij}}$ in the plane spanned by $e_i, e_j$. So its trace is $n - 2 + 2 \cos(\frac{2 \pi}{m_{ij}})$. For this to be an integer (and since $\frac{2 \pi}{m_{ij}} \in (0, \pi]$) we need $m_{ij} = 2,3,4$ or $6$ as required.\end{proof}

We note for future reference the values of $m_{ij}$ and of $b_{ij} = -2(e_i, e_j)$, where $(e_i, e_j) = -1$ if $m_{ij} = \infty$ and $-\cos \frac{\pi}{m_{ij}}$ otherwise is the standard inner product on $V$, in the below table:

\begin{center}
\begin{tabular}{|c|c| }
 \hline
 $m_{ij}$ & $b_{ij}$\\
 \hline
 $2$ & $0$\\
 \hline
 $3$ & $1$\\
 \hline
 $4$ & $\sqrt{2}$\\
 \hline
 $6$ & $\sqrt{3}$\\
 \hline
 $\infty$ & $2$\\
 \hline
\end{tabular}
\end{center}

This condition is not sufficient for $W$ to be crystallographic (see Chapter \ref{chap:3}). We do however have a necessary and sufficient criterion for a Coxeter group to be crystallographic:

\begin{theorem} \label{thm:crystalcond} (\cite{Humphreys}, p.136) Let $W$ be a Coxeter group. Then $W$ is crystallographic if and only if i) all edge labels of the Coxeter graph of $W$ are in $\left\{2,3,4,6,\infty\right\}$, and ii) for every circuit in the Coxeter graph of $W$, the number of edges labelled $4$ (respectively $6$) is even.\end{theorem}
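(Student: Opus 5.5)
Condition i) is already necessary by the preceding proposition, so the work is to show that, given i), a $W$-invariant lattice exists if and only if ii) holds. The plan is to look for a rescaling of the standard basis. Put $f_i = c_i e_i$ with $c_i > 0$ real. Then
\[
s_i(f_j) = f_j - 2(e_i,e_j)\tfrac{c_j}{c_i} f_i = f_j + b_{ij}\tfrac{c_j}{c_i} f_i ,
\]
so the $\mathbb{Z}$-span of the $f_i$ is $W$-stable exactly when every $b_{ij} c_j / c_i$ is an integer.

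For sufficiency, assume ii). We can treat each connected component separately, so assume the graph is connected, and work multiplicatively. On an edge labelled $3$ or $\infty$ we require $c_i = c_j$; then $b_{ij}c_j/c_i \in \{1,2\}$. On an edge labelled $4$ we require $c_j/c_i \in \{\sqrt2, 1/\sqrt2\}$; then $b_{ij}c_j/c_i \in \{2, 1\}$. On an edge labelled $6$ we require $c_j/c_i \in \{\sqrt3, 1/\sqrt3\}$; then $b_{ij}c_j/c_i \in \{3,1\}$. Since the product is symmetric in $i,j$, $b_{ij}c_j/c_i$ and $b_{ij}c_i/c_j$ multiply to $b_{ij}^2 \in \{2,3\}$, so both are then integers.

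To choose the $c_i$, write $c_i = 2^{x_i/2}3^{y_i/2}$ with $x_i,y_i \in \mathbb{Z}$. The edge conditions become $x_j - x_i = \pm1$ on $4$-edges and $0$ elsewhere, and $y_j - y_i = \pm1$ on $6$-edges and $0$ elsewhere. Fix a spanning tree, assign values along it, and then check the non-tree edges. Going around the fundamental cycle of a non-tree edge, we need a choice of signs making the sum of $\pm1$'s over its $4$-edges equal to zero. This is possible exactly when the cycle has an even number of $4$-edges, and the same holds for $6$-edges.

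This is the main obstacle: the sign choices on different cycles interact, so evenness per cycle is not obviously enough when cycles share edges. The clean fix is a parity choice. Set $x_i \in \{0,1\}$ equal to the parity of the number of $4$-edges on the tree path from the root to $i$. A $4$-edge then joins vertices of opposite parity, so $x_j - x_i = \pm1$. Every other edge, tree or non-tree, joins vertices of equal parity; for non-tree edges this is exactly where ii) on the fundamental cycle is used. Hence $x_j - x_i = 0$ there. Do the same for the $y_i$ with $6$-edges. This single assignment works for all edges at once, so the lattice $\bigoplus_i \mathbb{Z} f_i$ is $W$-invariant.

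For necessity, suppose some cycle $i_1,\dots,i_k,i_1$ has an odd number of $4$-edges (the $6$ case is analogous). Consider the trace argument applied to $w = s_{i_1}s_{i_2}\cdots s_{i_k}$. A cleaner route is to use the invariant lattice $L$ directly. For $v \in L$ the coefficient of $e_i$ in $s_i(v) - v$ is $2(e_i,v)$. Consider the cyclic product $b_{i_1i_2}b_{i_2i_3}\cdots b_{i_ki_1}$, which is a conjugation-invariant quantity. Rationality of the $W$-action forces this product to be rational, but an odd number of $\sqrt2$ factors makes it irrational.

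To make this rigorous, view $L\otimes\mathbb{Q}$ as a $\mathbb{Q}$-form $V_0$ stable under $W$. Then $\ker(s_i-1)\cap V_0$ is a hyperplane, and $\operatorname{im}(s_i-1)$ meets $V_0$ in a line $\mathbb{Q}\,d_ie_i$. Writing $s_i(v) = v - \phi_i(v)\,d_ie_i$ with $\phi_i$ rational on $V_0$, the numbers $a_{ij} = \phi_i(d_je_j) = 2(e_i,e_j)\,d_j/d_i$ are rational. The cyclic product of the $a$'s equals the cyclic product of the $-b$'s, which gives the contradiction. I expect verifying this rational-form step carefully to be the delicate point of the necessity direction.
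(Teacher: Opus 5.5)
Your proof is correct. For sufficiency you follow the paper's (Humphreys') construction: rescale the $e_i$ by $\sqrt2^{\pm1}$ or $\sqrt3^{\pm1}$ across $4$- and $6$-edges. Your spanning-tree parity labelling gives well-definedness more cleanly than the paper's remark that two paths ``together give a circuit'', since in general they only give a closed walk.

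For necessity you take a genuinely different route. The paper reduces to cycles and computes $\mathrm{tr}\,\rho(s_1\cdots s_k)$ entry by entry. That computation needs the only edges among $1,\dots,k$ to be the cycle edges. It then isolates $b_{12}b_{23}\cdots b_{k1}$ as the only possibly irrational contribution and invokes integrality of traces. You instead pass to the $W$-stable $\mathbb{Q}$-form $V_0=\mathbb{Q}L$. You show that $\prod_l b_{i_li_{l+1}}$ is, up to sign, a product of the rational numbers $a_{ij}$, because the $d_i$ telescope around the cycle. This is exactly the diagonal-rescaling invariance of cyclic products that the paper itself exploits later (Proposition \ref{prop:equivcond}, Lemma \ref{lem:isomorphcond}), with your $d_i$ playing the role of the $\lambda_i$.

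The step you flagged as delicate does go through, for two reasons:
\begin{itemize}
\item $(s_i-1)(V_0)\neq 0$, because $V_0$ spans $V$ over $\mathbb{R}$ and $(s_i-1)(V)=\mathbb{R}e_i$.
\item $V_0\cap\mathbb{R}e_i$ is at most one-dimensional over $\mathbb{Q}$. A $\mathbb{Z}$-basis of $L$ is simultaneously a $\mathbb{Q}$-basis of $V_0$ and an $\mathbb{R}$-basis of $V$, so $\mathbb{Q}$-independent vectors of $V_0$ remain $\mathbb{R}$-independent.
\end{itemize}

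Each route buys something. Yours handles every circuit directly, chords or not, with no matrix computation and no reduction step. It also fits the cyclic-product viewpoint of Chapters \ref{chap:4} and \ref{chap:5}. The paper's route uses only the already-established integrality of traces, and it exhibits an explicit element whose trace is irrational.

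Two minor slips occur in your heuristic paragraph, and neither affects the rigorous version. The $e_i$-coefficient of $s_i(v)-v$ is $-2(e_i,v)$, not $2(e_i,v)$. The relevant invariance is under rescaling the basis vectors, not under arbitrary conjugation.
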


\begin{proof} (\cite{Humphreys}, pp.135-6)

We have already shown that i) is a necessary condition for a Coxeter group to be crystallographic. For ii), note that it is sufficient to prove the statement for cycles. Suppose we have a cycle in the Coxeter graph with vertices labelled $1, \ldots, k$. Let $m_{ij}$ be the edge labels, $s_1, \ldots, s_k$ be the corresponding reflections and $e_1, \ldots, e_k$ the corresponding basis vectors for $V$. 

Let $g = s_1\ldots s_k$. Note that the $e_i$-component of $g \cdot e_i$ is just $1$ for $i \notin \left\{1, \ldots, k\right\}$ so to determine the trace of $\rho(g)$ it is sufficient to consider the action of $g$ on $e_1, \ldots, e_k$. Note that \begin{equation*}s_i(e_j) = e_j - b_{ij} e_i\end{equation*}

The coefficient of $e_1$ in $g \cdot e_1$ is, by direct computation, \begin{equation*}c = b_{12}^2 + b_{k1}^2 -1 + b_{12} b_{23} \ldots b_{k-1, k} b_{k1}\end{equation*} For $i \in \left\{2, \ldots, k-1\right\}$, the coefficient of $e_i$ in $g \cdot e_i$ is \begin{equation*}b_{i,i+1}^2 - 1\end{equation*} And the coefficient of $e_k$ in $g \cdot e_k$ is $-1$. 

The squares are all integers, so for the trace of $\rho(g)$ to be an integer we need $b_{12} b_{23} \ldots b_{k-1, k} b_{k1}$ to be an integer. And since the $b_{ij}$ take values in $\left\{0,1, \sqrt{2}, \sqrt{3}, 2\right\}$ this occurs if and only if there are an even number of $\sqrt{2}$s and $\sqrt{3}$s if and only if there are an even number of edges labelled $4$ and $6$ in the cycle. 

Suppose conversely that the hypotheses i) and ii) hold. Let $m_{ij}$ be the edge labels. It is sufficient to choose some real $\lambda_1, \ldots \lambda_n$ such that $\lambda_i = \lambda_j$ if $m_{ij}$ is $3$ or $\infty$, $\lambda_i = (\sqrt{2})^{\pm 1} \lambda_j$ if $m_{ij}$ is $4$ and $\lambda_i = (\sqrt{3})^{\pm 1} \lambda_j$ if $m_{ij}$ is $6$. If we can do this, then $\lambda_i e_i$ span a lattice which is invariant under the action of $W$.

So it suffices to prove that we can make a well-defined choice of the $\lambda_i$, which we do as follows: without loss of generality the Coxter graph $\Gamma$ of $W$ is connected. Fix a vertex $i$ and define $\lambda_i = 1$. For each $j$, choose a path from $i$ to $j$ and define values of $\lambda_{k_l}$ along this path. If $m_{k_l k_{l+1}}$ is $3$ or $\infty$ then let $\lambda_{k_{l+1}} = \lambda_{k_l}$. Where we encounter edges of label $4$ (respectively $6$), alternate multiplying and dividing by $\sqrt{2}$ (respectively $\sqrt{3}$). This will eventually define $\lambda_j$.

The only possible ambiguity is if we have two paths from $i$ to $j$, which together give a circuit in $\Gamma$. The assumption that the numbers of edges labelled $4$ and $6$ in this cycle are even then means that $\lambda_j$ is well-defined. It follows that the required conditions are satisfied for all edges. 
\end{proof}
\section{Weyl Groups, Dynkin Diagrams and Cartan Representations}

\begin{definition} (\cite{Carter}, p.319) Let $A$ be an $n \times n$ matrix with entries in $\mathbb{Z}$. We say that $A$ is a generalised Cartan matrix if it satisfies the conditions: 
\begin{itemize}
 \item $A_{ii} = 2$ for every $i$,
 \item $A_{ij} \leq 0$ for $i \neq j$, and,
 \item $A_{ij} = 0$ if and only if $A_{ji} = 0$.
 \end{itemize}
\end{definition}
In this case, we can associate a Lie algebra (in general infinite-dimensional, and over an arbitrary field $k$) to $A$, which we call the Kac-Moody algebra $L(A)$ given by $A$. These generalise the theory of finite-dimensional semi-simple Lie algebras, and as in that theory we obtain a root system for $L(A)$. 

\begin{definition} (\cite{Carter}, pp.319-320) Let $A$ be an $n \times n$ generalised Cartan matrix of rank $l$. Then a minimal realisation of $A$ is a triple $(\mathfrak{h}, \Pi, \Pi^\wedge)$ such that $\mathfrak{h}$ is a complex vector space of dimension $2n - l$, $\Pi = \left\{\alpha_1, \ldots, \alpha_n\right\}$ and $\Pi^\wedge = \left\{h_1, \ldots, h_n\right\}$ are linearly independent subsets of $\mathfrak{h}^*, \mathfrak{h}$ respectively, and $\alpha_j(h_i) = A_{ij}$ for all $i, j$.\end{definition}

It can be shown that a minimal realisation of $A$ exists, and that while it is not necessarily unique the choices made in its definition are arbitrary (see e.g. Chapter Sixteen of \cite{Carter} and Chapter One of \cite{Kac}). $\mathfrak{h}$ is called a Cartan subalgebra of the Kac-Moody algebra $L(A)$, and in the case that $L(A)$ is a finite-dimensional semisimple Lie algebra $\mathfrak{h}$ is indeed a Cartan subalgebra in that sense. 

 \begin{definition} \label{def:simpleroots} $\alpha_1, \ldots, \alpha_n$ are called the simple roots of $L(A)$, and form a base for its root system. Let $H$ be the free $\mathbb{Z}$-module spanned by $\alpha_1, \ldots, \alpha_n$. \end{definition}

 \begin{definition}(\cite{Kac}, p.35) The $i$th fundamental reflection is the $\mathbb{Z}$-linear map $s_i: H \to H$ defined by $s_i(\alpha_j) = \alpha_j - A_{ij} \alpha_i$.\end{definition}

\begin{definition} (\cite{Carter}, p.373) The subgroup of $\mathrm{GL}(H)$ (i.e. the group of invertible $\mathbb{Z}$-linear maps $H \to H$) generated by the fundamental reflections is called the Weyl group $W$ of $L(A)$. \end{definition}

\begin{theorem} \label{thm:weyliscoxeter} (Theorem 16.17 of \cite{Carter}, p.376) The Weyl group $W$ of the Kac-Moody algebra $L(A)$ is the Coxeter group generated by $s_1, \ldots, s_n$ with relations:
\begin{itemize}
\item$s_i^2 = 1$,
\item$(s_i s_j)^2 = 1$ if $A_{ij} A_{ji} = 0$,
\item $(s_i s_j)^3 = 1$ if $A_{ij} A_{ji} = 1$,
\item $(s_i s_j)^4 = 1$ if $A_{ij} A_{ji} = 2$, and,
\item $(s_i s_j)^6 = 1$ if $A_{ij} A_{ji} = 3$.\end{itemize}\end{theorem}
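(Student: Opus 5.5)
The statement has two parts. First, the listed relations hold in $W$. Second, they define $W$: no further relations are needed, and the order of $s_is_j$ is exactly the stated $m_{ij}$, with $m_{ij}=\infty$ when $A_{ij}A_{ji}\geq 4$. I would prove these in turn, the first by computation in the rank-two subspaces, the second by a comparison argument.

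\textbf{Relations hold.} Each $s_i$ fixes $\alpha_j$ modulo $\mathbb{Z}\alpha_i$ and sends $\alpha_i\mapsto -\alpha_i$, so $s_i^2=1$. For $i\neq j$, consider the rank-two submodule $H_{ij}=\mathbb{Z}\alpha_i\oplus\mathbb{Z}\alpha_j$. It is preserved by $s_i$ and $s_j$, and on it $s_is_j$ has matrix
\begin{equation*}\begin{pmatrix} -1+A_{ij}A_{ji} & -A_{ij}\\ A_{ji} & -1\end{pmatrix}\end{equation*}
up to the choice of convention. This has determinant $1$ and trace $A_{ij}A_{ji}-2$. For $A_{ij}A_{ji}=0,1,2,3$ the trace is $-2,-1,0,1$, so the eigenvalues are primitive roots of unity of order $2,3,4,6$. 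When $A_{ij}A_{ji}=0$ the matrix is $-I$ outright. In the other cases the eigenvalues are distinct, so the matrix is diagonalisable over $\mathbb{C}$ and has exactly that order on $H_{ij}$.

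To pass to all of $H$, note that for any $\alpha_k$ both $s_i$ and $s_j$ change it only by elements of $H_{ij}$. Hence $s_is_j$ acts trivially on $H/H_{ij}$. Write $T=s_is_j$, acting on $H$ with $T-1$ mapping $H$ into $H_{ij}$. Since $m=m_{ij}$ is the order of $T$ on $H_{ij}$, we have
\begin{equation*}T^m-1=(1+T+\cdots+T^{m-1})(T-1).\end{equation*}
Here $T-1$ lands in $H_{ij}$, and on $H_{ij}$ the operator $1+T+\cdots+T^{m-1}$ vanishes because $T$ has no eigenvalue $1$ there. So $T^m=1$ on $H$. The case $A_{ij}A_{ji}\geq 4$ gives trace $\geq 2$, hence infinite order, consistent with $m_{ij}=\infty$.

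\textbf{No further relations.} Let $\tilde W$ be the abstract Coxeter group with these $m_{ij}$. We have a surjection $\tilde W\to W$, and must show it is injective. The standard route is Tits' argument on the geometric representation. Define a real representation $V$ of $\tilde W$ using the symmetric bilinear form $B(e_i,e_j)=-\cos(\pi/m_{ij})$, or $-1$ for $m_{ij}=\infty$. The Tits cone argument shows that this representation is faithful, so $\tilde W\to GL(V)$ is injective. The difficulty is then to transfer faithfulness to $H$.

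My plan for the transfer is to use Tits' argument directly on $H\otimes\mathbb{R}$, avoiding any comparison of forms. The key input is the dichotomy that for $w\in\tilde W$ and each $i$, either $\ell(ws_i)>\ell(w)$ and $w\alpha_i$ is a nonnegative combination of simple roots, or $\ell(ws_i)<\ell(w)$ and it is nonpositive. This is proved by induction on $\ell(w)$, reducing to the rank-two subgroup $\langle s_i,s_j\rangle$, where it is checked from the explicit matrix above; the entries $-A_{ij}$, $A_{ji}$ and their products behave like the Chebyshev-type sequences of the symmetric case. Granting the dichotomy, suppose $w\neq 1$ lies in the kernel. Pick $i$ with $\ell(ws_i)<\ell(w)$. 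Then $w\alpha_i$ is nonpositive, yet $w\alpha_i=\alpha_i$, a contradiction.

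The main obstacle is the rank-two verification of the dichotomy for non-symmetric entries. For example, with $A_{ij}=-1$ and $A_{ji}=-3$, I would list the roots $(s_is_j)^k\alpha_i$ and check sign-coherence explicitly for the four finite types. For $A_{ij}A_{ji}\geq 4$, I would show that the coefficients grow by the linear recurrence with trace $\geq 2$, which stays positive.
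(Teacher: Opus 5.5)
Your proposal is correct, and it takes a different route from the paper's source. The paper gives no proof of its own here; it imports the theorem from \cite{Carter}. The argument there, as in \cite{Kac}, \S3, gets the key positivity fact from the Lie algebra. That fact says that $\ell(ws_i)<\ell(w)$ forces $w\alpha_i$ to be a nonpositive combination of simple roots. It follows because every root of $L(A)$ lies in $\pm\sum_k\mathbb{Z}_{\geq 0}\alpha_k$ and $W$ permutes the roots, so $s_i$ permutes the positive roots other than $\alpha_i$. The exchange condition and the Coxeter presentation then follow combinatorially.

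You instead prove the same positivity statement for the abstract group $\tilde W$ by Tits' induction, as in Theorem 5.4 of \cite{Humphreys}. Its only input is rank-two sign-coherence, and injectivity of $\tilde W\to W$ is then immediate. This buys you three things:
\begin{enumerate}
\item The argument is Lie-algebra-free.
\item It gives faithfulness directly on the root lattice $H$, which is how this paper defines $W$. A Lie-theoretic argument naturally lives on $\mathfrak{h}^*$, which is bigger than $H\otimes\mathbb{C}$ when $\det A=0$.
\item It applies verbatim to real matrices of the same shape, such as $(-b_{ij})$, so it also reproves faithfulness of the reflection representation.
\end{enumerate}

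The price is the rank-two analysis, which you can shorten. When $A_{ij}\neq 0$, replace $\alpha_j$ by $\sqrt{A_{ji}/A_{ij}}\,\alpha_j$; both off-diagonal coefficients become $\sqrt{A_{ij}A_{ji}}$. For $A_{ij}A_{ji}\leq 4$ this equals $2\cos(\pi/m_{ij})$, which is the positive diagonal rescaling of Chapter \ref{chap:4}, so Humphreys' dihedral computations apply unchanged. For $A_{ij}A_{ji}>4$, write $\sqrt{A_{ij}A_{ji}}=2\cosh\theta$; the coefficients become $\sinh(k\theta)/\sinh\theta>0$. Positive rescalings preserve sign-coherence, so you need no separate listing for the $B_2$ or $G_2$ cases.

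Two minor points. Your detour through the geometric representation $V$ is never actually used. For $A_{ij}A_{ji}=4$, trace $2$ alone does not give infinite order: add that $T|_{H_{ij}}\neq 1$ because its diagonal contains $-1$, so it is a nontrivial unipotent.
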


\begin{definition}If $k$ is a field (of any characteristic) we can regard $k$ as a $\mathbb{Z}$-module in the natural way. Define $H_k = H \bigotimes k$ as a tensor product of $\mathbb{Z}$-modules. This is a $k$-vector space of dimension $n$.\end{definition}

\begin{definition} Let $W$ be a Coxeter group which emerges as the Weyl group of a Kac-Moody algebra $L(A)$. Then $H_k$ together with the natural action of the Weyl group is called a Cartan representation of $W$ over $k$.\end{definition}

If we suppose that $L(A)$ is taken to be over the field $k$, then $H_k$ is a subspace of the dual $\mathfrak{h}^*$ of the Cartan subalgebra of $L(A)$. In fact, since $H_k$ has dimension $n$ and $\mathfrak{h}^*$ has dimension $2n - \text{rank}(A)$, $H_k = \mathfrak{h}^*$ if and only if $\det A \neq 0$.

Note that in the case $k = \mathbb{R}$, the action of $W$ preserves the lattice spanned by $\alpha_1, \ldots, \alpha_n$. In particular, if $H_\mathbb{R}$ is isomorphic to the reflection representation of $W$, $W$ must be crystallographic. 

As in the finite-dimensional case, we can classify the root systems of Kac-Moody algebras according to their Dynkin diagrams. It is convenient to associate a Dynkin diagram directly with the generalised Cartan matrix. 

\begin{definition} \label{def:dynkin} (\cite{Carter}, p.353 and \cite{Kac}, p.51; the exact notation varies slightly between texts) Let $A$ be an $n \times n$ generalised Cartan matrix. Then the Dynkin diagram $\Delta(A)$ of $A$ is the diagram with vertices given by $\left\{1,\ldots,n\right\}$ and edges between $i$ and $j$ determined by the values of $A_{ij}$ and $A_{ji}$ as follows:

\begin{itemize}
\item If $A_{ij} A_{ji} = 0$ there is no edge between $i$ and $j$.

\item If $A_{ij} A_{ji} = 1$ there is a single undirected edge between $i$ and $j$.

\item If $A_{ij} = -1, A_{ji} = -2$ then $i$ and $j$ are joined by a double edge directed towards $j$; similarly if $A_{ji} = -1, A_{ij} = -2$ then $i$ and $j$ are joined by a double edge directed towards $i$.

\item If $A_{ij} = -1, A_{ji} = -3$ then $i$ and $j$ are joined by a triple edge directed towards $j$; similarly if $A_{ji} = -1, A_{ij} = -3$ then $i$ and $j$ are joined by a triple edge directed towards $i$.

\item If $A_{ij} = A_{ji} = -2$ then $i$ and $j$ are joined by an undirected quadruple edge. 

\item If $A_{ij} = -1, A_{ji} = -4$ then $i$ and $j$ are joined by a quadruple edge directed towards $j$. Similarly if $A_{ij} = -4, A_{ji} = -1$.

\item If $A_{ij} A_{ji} \geq 5$, $i$ and $j$ are joined by a single bold-faced edge labelled by the  pair $(|A_{ij}|, |A_{ji}|)$ and directed towards $i$ if $A_{ij} > A_{ji}$ and $j$ if $A_{ji} > A_{ij}$. (The edge is not directed if $A_{ij} = A_{ji}$.)
\end{itemize}
\end{definition}
Several examples of Dynkin diagrams will follow in Chapter \ref{chap:3}.

\begin{theorem} Let $W$ be a Coxeter group with Coxeter graph $\Gamma$, which arises as the Weyl group of a Kac-Moody algebra $L(A)$, and $\Delta$ the Dynkin diagram of $A$. Associate the vertices of $\Gamma$ to the vertices of $\Delta$ via the natural bijection. Then:
\begin{itemize}
\item There is no edge between $i$ and $j$ in $\Gamma$ if and only if there is no edge between $i$ and $j$ in $\Delta$.

\item There is an edge labelled $3$ between $i$ and $j$ in $\Gamma$ if and only if there is a single unlabelled edge between $i$ and $j$ in $\Delta$.

\item There is an edge labelled $4$ between $i$ and $j$ in $\Gamma$ if and only if there is a double edge between $i$ and $j$ in $\Delta$.

\item There is an edge labelled $6$ between $i$ and $j$ in $\Gamma$ if and only if there is a triple edge between $i$ and $j$ in $\Delta$.

\item There is an edge labelled $\infty$ between $i$ and $j$ in $\Gamma$ if and only if there is either a quadruple edge or a labelled edge between $i$ and $j$ in $\Delta$.
\end{itemize} 
\end{theorem}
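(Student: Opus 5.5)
The plan is to reduce everything to the single integer $A_{ij}A_{ji}$ and compare two case lists that are both indexed by it. On the Coxeter side, Theorem \ref{thm:weyliscoxeter} tells us the order $m_{ij}$ of $s_is_j$ in terms of $A_{ij}A_{ji}$. On the Dynkin side, Definition \ref{def:dynkin} determines the edge type between $i$ and $j$ in terms of $A_{ij}$ and $A_{ji}$. Since the generalised Cartan matrix conditions force $A_{ij},A_{ji}\le 0$, with $A_{ij}=0$ if and only if $A_{ji}=0$, the product $A_{ij}A_{ji}$ is a non-negative integer. So I will split into the cases $A_{ij}A_{ji}=0,1,2,3$ and $A_{ij}A_{ji}\ge 4$, and in each case read off both the Coxeter label and the Dynkin edge.

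First I must make sure Theorem \ref{thm:weyliscoxeter} gives the exact order $m_{ij}$, not merely a relation satisfied by $s_is_j$. The cleanest way is to compute $s_is_j$ directly on $H_\mathbb{R}$ restricted to the plane spanned by $\alpha_i,\alpha_j$. With respect to the basis $\alpha_i,\alpha_j$ its matrix is
\begin{equation*}
s_is_j=\begin{pmatrix}-1 & -A_{ij}\\ 0 & 1\end{pmatrix}\begin{pmatrix}1 & 0\\ -A_{ji} & -1\end{pmatrix}
=\begin{pmatrix}A_{ij}A_{ji}-1 & A_{ij}\\ -A_{ji} & -1\end{pmatrix}.
\end{equation*}
This has determinant $1$ and trace $A_{ij}A_{ji}-2$. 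Write $t=A_{ij}A_{ji}-2$.
\begin{itemize}
\item If $A_{ij}A_{ji}=0,1,2,3$, then $t=2\cos(2\pi/m)$ with $m=2,3,4,6$ respectively. The eigenvalues are primitive $m$-th roots of unity, so the order is exactly $m$. (For $m=2$ the reflections commute, and $s_is_j\neq 1$ because the $s_i$ are distinct.)
\item If $A_{ij}A_{ji}\ge 4$, then $t\ge 2$. Either the matrix is unipotent and not the identity (when $t=2$), or it has a real eigenvalue greater than $1$. In both cases $s_is_j$ has infinite order, so $m_{ij}=\infty$.
\end{itemize}
Together with Theorem \ref{thm:weyliscoxeter}, this identifies the Coxeter label as $2,3,4,6,\infty$ in these cases, with label $2$ meaning no edge in $\Gamma$.

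Next I go through Definition \ref{def:dynkin}, using the integrality and non-positivity of the entries.
\begin{itemize}
\item If $A_{ij}A_{ji}=0$, there is no edge.
\item If $A_{ij}A_{ji}=1$, both entries are $-1$, giving a single edge.
\item If $A_{ij}A_{ji}=2$, the entries are $\{-1,-2\}$, giving a double edge.
\item If $A_{ij}A_{ji}=3$, the entries are $\{-1,-3\}$, giving a triple edge.
\item If $A_{ij}A_{ji}=4$, the entries are either both $-2$ or $\{-1,-4\}$, giving a quadruple edge.
\item If $A_{ij}A_{ji}\ge 5$, we get a labelled bold edge.
\end{itemize}
Conversely, each Dynkin edge type occurs only in the corresponding case. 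Since the cases on each side are mutually exclusive and exhaustive, the correspondence gives each biconditional in the statement.

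The only step needing real care is the exact-order claim, especially the $A_{ij}A_{ji}=4$ case. There $s_is_j$ has trace $2$ but is not the identity, because $A_{ij}\neq 0$. Hence it is a nontrivial unipotent matrix and has infinite order. Everything else is a routine enumeration of factorisations of small integers.
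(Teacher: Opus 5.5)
Your proposal is correct and follows the paper's route. The paper's proof just says the statement follows from Theorem \ref{thm:weyliscoxeter}, Definition \ref{def:dynkin} and the definition of the Coxeter graph, which is exactly your case split on $A_{ij}A_{ji}\in\{0,1,2,3\}$ versus $A_{ij}A_{ji}\ge 4$. Your explicit $2\times 2$ computation showing that $s_is_j$ has order exactly $m_{ij}$ (in particular infinite order when $A_{ij}A_{ji}\ge 4$) is a sound extra check of something the paper leaves implicit in the Coxeter presentation.
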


\begin{proof}This follows directly from Theorem \ref{thm:weyliscoxeter}, Definition \ref{def:dynkin} and the definition of a Coxeter graph.\end{proof}

In particular, we observe that the Dynkin diagram uniquely determines the Coxeter graph, but we may in general associate several distinct Dynkin diagrams to a Coxeter graph: we have a choice of direction in the cases of edges labelled $4$ and $6$, and there are infinitely many possibilities where an edge is labelled $\infty$.

From the perspective of groups, this means that every Kac-Moody algebra has a unique Weyl group, but given a Coxeter group $W$ there may be many distinct Kac-Moody algebras with $W$ for a Weyl group and thus many Cartan representations. Any $W$ with edge labels in $\left\{2,3,4,6,\infty\right\}$ will arise as the Weyl group of at least one Kac-Moody algebra and so have at least one Cartan representation. 

Natural questions then arise: when are two Cartan representations of a given Coxeter group $W$ isomorphic? When are they isomorphic to the reflection representation of $W$? We will aim to answer these questions in subsequent chapters, but first need some more definitions. 

\begin{definition} (\cite{Kac}, p.2, \cite{Carter}, p.336) Let $A$ be an $n \times n$ generalised Cartan matrix. Then $A$ is decomposable if there exists a partition of $\left\{1, \ldots, n\right\}$ into two non-empty sets $I, J$ such that $A_{ij} = A_{ji} = 0$ for all $i \in I, j \in J$. If $A$ is not decomposable, we say that $A$ is indecomposable.\end{definition}

\begin{proposition} \label{prop:indecompcond} Let $A$ be a generalised Cartan matrix. Then the following are equivalent:

\begin{itemize}
    
\item $A$ is indecomposable.

\item The Dynkin diagram of $A$ is connected.

\item The Cartan representation of the Weyl group of the associated Kac-Moody algebra (over any field of characteristic $0$) is indecomposable. 
\end{itemize}
\end{proposition}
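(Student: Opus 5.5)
The plan is to prove the equivalence of the first two conditions directly from Definition \ref{def:dynkin}, and then to show that decomposability of $A$ matches decomposability of the Cartan representation $H_k$, with $k$ of characteristic $0$. Only the direction "connected Dynkin diagram $\Rightarrow$ indecomposable representation" needs real work.

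\emph{First and second conditions.} By Definition \ref{def:dynkin}, vertices $i \neq j$ of $\Delta(A)$ are joined by an edge exactly when $A_{ij}A_{ji} \neq 0$. Since $A_{ij}=0$ if and only if $A_{ji}=0$, this happens exactly when $A_{ij}\neq 0$. So a partition $\{1,\ldots,n\} = I \sqcup J$ into non-empty sets with $A_{ij}=A_{ji}=0$ for all $i\in I$, $j\in J$ is the same thing as a splitting of the vertex set of $\Delta(A)$ into two non-empty sets with no edges between them. Such a splitting exists exactly when $\Delta(A)$ is disconnected.

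\emph{Decomposable $A$ gives a decomposable representation.} Let $I,J$ be a partition as above. Put $U=\mathrm{span}_k\{\alpha_i : i\in I\}$ and $U'=\mathrm{span}_k\{\alpha_j : j\in J\}$, so that $H_k=U\oplus U'$ with both summands non-zero. Take $i\in I$. Then $s_i(\alpha_l)=\alpha_l-A_{il}\alpha_i\in U$ for every $l\in I$. For $j\in J$ we have $A_{ij}=0$, so $s_i(\alpha_j)=\alpha_j$. Hence $s_i$ preserves both $U$ and $U'$, and symmetrically so does $s_j$ for $j \in J$. The generators of $W$ therefore preserve both summands, so $H_k$ is decomposable.

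\emph{Indecomposable representation gives connected $\Delta(A)$, via the contrapositive.} This is the main step. Suppose $H_k=U\oplus U'$ with $U,U'$ both $W$-invariant. Fix $i$. The image of $s_i-1$ is contained in $k\alpha_i$. It is non-zero, because $(s_i-1)\alpha_i=-2\alpha_i\neq 0$; this uses characteristic $\neq 2$. So $s_i-1$ has rank one. Since $s_i$ preserves both summands,
\[
(s_i-1)H_k=(s_i-1)U\oplus(s_i-1)U',
\]
so exactly one of the two pieces is non-zero. Hence $\alpha_i$ lies in $U$ or in $U'$, and $s_i$ acts trivially on the other summand. Let $I=\{i:\alpha_i\in U\}$ and $J=\{j:\alpha_j\in U'\}$; these partition $\{1,\ldots,n\}$. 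For $i\in I$ and $j\in J$, $s_i$ fixes $\alpha_j\in U'$, so $A_{ij}\alpha_i=0$ in $H_k$. Because $k$ has characteristic $0$, the integer $A_{ij}$ is non-zero in $k$ whenever it is non-zero in $\mathbb{Z}$, so $A_{ij}=0$, and likewise $A_{ji}=0$. If $\Delta(A)$ is connected, the first part of the proof forces $I$ or $J$ to be empty. Say $J=\emptyset$; then all $\alpha_i$ lie in $U$, so $U=H_k$ and $U'=0$. Thus the representation is indecomposable.

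The obstacle to watch is exactly the use of the characteristic. It enters twice: we need $2\neq 0$ so that $s_i$ is a genuine reflection with rank-one $s_i-1$, and we need $A_{ij}\not\equiv 0$ in $k$ to pass from the relation in $H_k$ back to the integer matrix $A$. This is why the statement restricts to characteristic $0$.
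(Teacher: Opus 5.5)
Your proof is correct and complete. The paper gives no argument of its own, calling the result straightforward verification. Your observation that $s_i-1$ has rank one (using $2\neq 0$) is exactly the non-trivial step such a verification needs: it forces each $\alpha_i$ into one of the two invariant summands, and then $A_{ij}=A_{ji}=0$ across the resulting partition (using characteristic $0$).

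One cosmetic slip: the heading of your third paragraph names the wrong direction. That paragraph actually proves that a connected $\Delta(A)$ gives an indecomposable representation, which is the direction you correctly identified at the start as the one needing work.
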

The proof is straightforward verification, and is omitted. This means that we can without loss of generality only study indecomposable Cartan matrices (at least in characteristic $0$), and will do so in this text. 

\begin{definition} (\cite{Vinberg}) Let $A$ be an $n \times n$ generalised Cartan matrix. Then the cyclic products of $A$ are the quantities $A_{i_1 i_2} \ldots A_{i_k i_1}$ for $i_1, \ldots, i_k \in \left\{1,\ldots,n\right\}$ all distinct.\end{definition}

\begin{definition} (\cite{Vinberg}) Two generalised Cartan matrices $A, B$ are called equivalent if there exists a diagonal matrix $D$ with positive diagonal entries such that $A = DBD^{-1}$.\end{definition}

Note that this definition is not universal. Often, a different notion of equivalence is used: $A$ and $B$ are equivalent if there exists a permutation $\sigma$ of $1, \ldots, n$ such that $B_{ij} = A_{\sigma(i) \sigma(j)}$ for all $i, j$ (\cite{Carter}, p.336). But this permuting of the entries is less useful for these purposes as it does not preserve the Cartan representation.

\begin{proposition} \label{prop:equivcond} Generalised Cartan matrices $A, B$ are equivalent if and only if they have the same cyclic products. \end{proposition}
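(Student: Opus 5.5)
The forward direction is a direct computation. Suppose $A = DBD^{-1}$ with $D = \mathrm{diag}(d_1, \ldots, d_n)$ and all $d_i > 0$. Then $A_{ij} = d_i B_{ij} d_j^{-1}$. In any cyclic product $A_{i_1 i_2} A_{i_2 i_3} \cdots A_{i_k i_1}$, the factors telescope:
\begin{equation*}
A_{i_1 i_2} \cdots A_{i_k i_1} = (d_{i_1} d_{i_2}^{-1})(d_{i_2} d_{i_3}^{-1}) \cdots (d_{i_k} d_{i_1}^{-1}) \, B_{i_1 i_2} \cdots B_{i_k i_1} = B_{i_1 i_2} \cdots B_{i_k i_1}.
\end{equation*}
So equivalent matrices have the same cyclic products.

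For the converse, assume $A$ and $B$ have the same cyclic products. The length-one products give $A_{ii} = B_{ii}$, and both equal $2$ anyway. The length-two products give $A_{ij}A_{ji} = B_{ij}B_{ji}$. Combined with the condition that $A_{ij} = 0$ if and only if $A_{ji} = 0$ (and likewise for $B$), this shows that $A_{ij} = 0$ exactly when $B_{ij} = 0$. Hence $A$ and $B$ have the same underlying graph $G$, with an edge $\{i,j\}$ whenever $A_{ij} \neq 0$. By the sign conventions, $A_{ij}$ and $B_{ij}$ are both negative on edges, so $r_{ij} = A_{ij}/B_{ij} > 0$ there. We want positive $d_i$ with $r_{ij} = d_i/d_j$ on every edge; off-edge entries are then automatically correct, since both are zero.

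We construct the $d_i$ one connected component of $G$ at a time, exactly as the $\lambda_i$ were built in the proof of Theorem \ref{thm:crystalcond}. Fix a root vertex $i_0$ in the component and set $d_{i_0} = 1$. For any other vertex $j$, pick a path $i_0 = k_0, k_1, \ldots, k_m = j$ and define
\begin{equation*}
d_j = \prod_{l} r_{k_{l+1} k_l}, \quad \text{so that } d_{k_{l+1}} = d_{k_l} \, r_{k_{l+1} k_l}.
\end{equation*}
This gives $d_{k_{l+1}}/d_{k_l} = A_{k_{l+1}k_l}/B_{k_{l+1}k_l}$ at each step along the path, and all the $d_j$ are positive.

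The main step is showing that $d_j$ does not depend on the path. It suffices to show that the product of the ratios around any closed walk in $G$ equals $1$. There are two kinds of closed walk to handle.

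1. Backtracking along a single edge contributes $r_{ij}r_{ji}$, which equals $1$ because $A_{ij}A_{ji} = B_{ij}B_{ji}$.
2. Any other closed walk decomposes into simple cycles $i_1, \ldots, i_k$ together with backtracks. For a simple cycle the product of ratios is
\begin{equation*}
\frac{A_{i_1 i_2} \cdots A_{i_k i_1}}{B_{i_1 i_2} \cdots B_{i_k i_1}} = 1,
\end{equation*}
by the hypothesis on cyclic products with distinct indices. The denominator is nonzero because every factor corresponds to an edge of $G$.

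Cleanly organising this reduction to simple cycles is the one real obstacle. The simplest route is to argue that two paths from $i_0$ to $j$ can be compared by repeatedly splitting off simple cycles, as in Theorem \ref{thm:crystalcond}.

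With well-definedness established, each edge $\{i,j\}$ satisfies $d_i/d_j = A_{ij}/B_{ij}$. To see this, extend a path from $i_0$ to $j$ by the edge to $i$; by path-independence this computes $d_i$. It follows that $A_{ij} = d_i B_{ij} d_j^{-1}$ for all $i,j$, that is, $A = DBD^{-1}$.
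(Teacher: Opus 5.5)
Your proof is correct and follows the same route as the paper: telescoping for the forward direction, and for the converse building positive $d_i$ along paths in the common Dynkin graph from the positive ratios $A_{ij}/B_{ij}$, with path-independence supplied by the cyclic product hypothesis. You are somewhat more careful than the paper, which assumes connectedness implicitly and does not spell out the reduction of closed walks to simple cycles and backtracks. You handle the components separately and reduce closed walks explicitly, with the backtracks covered by the length-two cyclic products.
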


\begin{proof}(\cite{Vinberg}) Suppose $A, B$ are equivalent and write $A = DBD^{-1}$ as above. Then \begin{equation*}A_{ij} = \Sigma_{k, l} D_{ik} B_{kl} D^{-1}_{lj} = d_i d_j^{-1} B_{ij}\end{equation*} Thus \begin{equation*}A_{i_1i_2} \ldots A_{i_k i_1} = d_{i_1} d_{i_2}^{-1} \ldots d_{i_k} d_{i_1}^{-1} B_{i_1 i_2} \ldots B_{i_k i_1} = B_{i_1 i_2} \ldots B_{i_k i_1}\end{equation*} So the cyclic products are equal.

Suppose conversely all cyclic products are equal. If $A_{ij} = 0$ then $A_{ij} A_{ji} = 0$, which means $B_{ij} B_{ji} = 0$ also and so $B_{ij} = 0$. Similarly if $B_{ij} = 0$ then $A_{ij} = 0$. If $A_{ij} \neq 0$, let $c_{ij} = A_{ij} B_{ij}^{-1}$. Then $c_{ij} > 0$ and \begin{equation*}c_{i_1i_2} \ldots c_{i_k i_1} = 1\end{equation*} whenever all factors on the left-hand side are defined. 

This means we can make a well-defined choice of $d_1, \ldots, d_n > 0$ such that $c_{ij} = d_i d_j^{-1}$ for each $i, j$: let $d_1 = 1$ (or indeed any positive rational number). For each $i$, choose a path $1 = j_1, \ldots, j_k = i$ from $1$ to $i$ in the Dynkin diagram $\Delta$ of $A$ (or equivalently of $B$, as there is an edge from $i$ to $j$ in $\Delta(A)$ if and only if there is such a path in $\Delta(B)$). Define $d_{j_2} = c_{j_2j_1} d_{j_1}$, $d_{j_3} = c_{j_3j_2} d_{j_2}\ldots$, and repeat until $d_i$ is defined. The given condition on the products of the $c_{ij}$ means that this is well-defined, and since the $c_{ij}$ are all positive the $d_i$ also are. 

Now \begin{equation*}A_{ij} = c_{ij} B_{ij} = d_i B_{ij} d_j^{-1}\end{equation*} for all $i, j$. So if $D$ is the matrix $\text{diag}(d_1, \ldots, d_n)$ then $A = DBD^{-1}$ as required. \end{proof}

\begin{definition} (\cite{Carter}, p.346) A generalised Cartan matrix $A$ is symmetrisable if there exists a non-singular diagonal matrix $D$ and a symmetric matrix $B$ such that $A = DB$. \end{definition}

It is convenient to work with the following equivalent condition:  

\begin{lemma} \label{lem:symmcond} (Lemma 15.15 of \cite{Carter}, p.346) Let $A$ be an $n \times n$ generalised Cartan matrix. Then $A$ is symmetrisable if and only if \begin{equation*}A_{i_1i_2} A_{i_2 i_3} \ldots A_{i_k i_1} = A_{i_2 i_1} A_{i_3 i_2} \ldots A_{i_1 i_k}\end{equation*} for all $i_1, \ldots, i_k \in \left\{1, \ldots,n\right\}$. That is, the cyclic products of $A$ are the same as the cyclic products of $A^T$, and so $A$ and $A^T$ are equivalent.\end{lemma}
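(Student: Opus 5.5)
The plan is to prove both directions directly, and to get the final clause from Proposition \ref{prop:equivcond}. The cyclic products of $A^T$ are exactly the reversed products $A_{i_2 i_1} A_{i_3 i_2} \ldots A_{i_1 i_k}$. So once the displayed identity is established, that proposition immediately gives that $A$ and $A^T$ are equivalent.

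\emph{Symmetrisable implies the identity.} Suppose $A = DB$ with $D = \mathrm{diag}(d_1, \ldots, d_n)$ non-singular and $B$ symmetric. Then $A_{ij} = d_i B_{ij}$, so
\begin{equation*}A_{i_1 i_2} \ldots A_{i_k i_1} = d_{i_1} \ldots d_{i_k} B_{i_1 i_2} \ldots B_{i_k i_1}.\end{equation*}
Similarly $A_{i_2 i_1} \ldots A_{i_1 i_k} = d_{i_2} \ldots d_{i_k} d_{i_1} B_{i_2 i_1} \ldots B_{i_1 i_k}$. These agree because $B_{ij} = B_{ji}$ and the product of the $d$'s is the same set of factors in both.

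\emph{The identity implies symmetrisable.} It suffices to find nonzero $d_1, \ldots, d_n$ with $d_j A_{ij} = d_i A_{ji}$ for all $i, j$. Given such $d_i$, set $D = \mathrm{diag}(d_1, \ldots, d_n)$ and $B = D^{-1}A$. Then $B_{ij} = A_{ij}/d_i$, and symmetry of $B$ is exactly the condition $d_j A_{ij} = d_i A_{ji}$.

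For $A_{ij} = 0$ the condition holds automatically, since then $A_{ji} = 0$ as well. So we only need $d_i/d_j = A_{ij}/A_{ji}$, a positive rational, whenever $i$ and $j$ are adjacent in $\Delta(A)$. We may assume $A$ is indecomposable, treating each connected component of $\Delta(A)$ separately. Then we mimic the construction in the proof of Proposition \ref{prop:equivcond}:
\begin{itemize}
\item Set $d_1 = 1$.
\item For each $i$, choose a path $1 = j_1, \ldots, j_m = i$ in $\Delta(A)$.
\item Define $d_{j_{l+1}} = d_{j_l} A_{j_{l+1} j_l}/A_{j_l j_{l+1}}$ along the path.
\end{itemize}

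\emph{Main step: well-definedness.} This is where the hypothesis is used. Two paths from $1$ to $i$ combine into a closed walk, and the two candidate values of $d_i$ agree exactly when the product of the ratios $A_{ab}/A_{ba}$ around that closed walk equals $1$. That product equals $1$ precisely when the cyclic product of $A$ equals the reversed one.

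The obstacle is that the hypothesis is stated only for distinct indices, whereas a closed walk may repeat vertices. I would handle this in two ways. First, backtracking along an edge contributes the factor $(A_{ab}/A_{ba})(A_{ba}/A_{ab}) = 1$. Second, any closed walk splits at a repeated vertex into shorter closed walks, and the ratio-product is multiplicative under this splitting. By induction on length, the product around every closed walk therefore reduces to products around simple cycles, each of which is $1$ by hypothesis.

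Finally, all the $d_i$ are positive, hence nonzero. So $D$ is non-singular and $B = D^{-1}A$ is symmetric, as required.
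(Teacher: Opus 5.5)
Your proof is correct and follows essentially the same route as the paper. The forward direction is the same computation with $A_{ij} = d_i B_{ij}$. For the converse you use the same construction of $d_i$ as a product of ratios $A_{ba}/A_{ab}$ along a path from vertex $1$, read off well-definedness from the closed walk formed by two paths, and obtain $B = D^{-1}A$ symmetric from the adjacent-pair condition. Your one addition is the reduction of closed walks with repeated vertices to simple cycles, which the paper skips by applying the hypothesis (stated for arbitrary, not necessarily distinct, $i_1, \ldots, i_k$) directly to the combined walk. That reduction is exactly what is needed if the hypothesis is read as a statement about cyclic products with distinct indices, and invoking Proposition \ref{prop:equivcond} for the final clause is fine.
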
 

\begin{proof} (\cite{Carter}, pp.346-347) Suppose $A$ is symmetrisable and write $A = DB$ where $D = \text{diag}(d_1, \ldots, d_n)$ and $B$ is symmetric. Thus $A_{ij} = d_i B_{ij}$ for all $i, j$. Therefore \begin{equation*}A_{i_1 i_2} \ldots A_{i_k i_1} = d_{i_1} \ldots d_{i_k} B_{i_1 i_2}  \ldots B_{i_k i_1}\end{equation*} and \begin{equation*}A_{i_2 i_1} \ldots A_{i_1 i_k} = d_{i_1} \ldots d_{i_k} B_{i_2 i_1} \ldots B_{i_1 i_k}\end{equation*} And these expressions are equal since $B$ is symmetric.

Suppose conversely the given condition holds. Without loss of generality $A$ is indecomposable. For each $i \in \left\{1, \ldots, n\right\}$, we can thus choose a path $1 = j_1, \ldots, j_r = i$ from $1$ to $i$ in the Dynkin diagram of $A$. That is, $j_2, \ldots, j_{r-1}$ with $A_{j_k j_{k+1}} \neq 0$ for each $k$. 

Set $d_1$ to be any non-zero real number. We want to define \begin{equation*}d_i = \dfrac{A_{j_r j_{r-1}}\ldots A_{j_2 j_1}}{A_{j_11j_2} \ldots A_{j_{r-1} j_r}} d_1\end{equation*} for each $i$. For this to be well-defined, we need it to be independent of the choice of path from $1$ to $i$. 

Let $1 = k_1, \ldots, k_s = i$ be a second such path. Then we want \begin{equation*}\dfrac{A_{j_r j_{r-1}}\ldots A_{j_2 j_1}}{A_{j_11j_2} \ldots A_{j_{r-1} j_r}} = \dfrac{A_{k_s k_{s-1}}\ldots A_{k_2 k_1}}{A_{k_11k_2} \ldots A_{k_{r-1} k_r}}\end{equation*} But if we multiply out the denominators we obtain precisely the given condition in the case $i_1, \ldots, i_k = 1= j_1, j_2, \ldots, j_r = i = k_s, k_{s-1}, \ldots, k_2$. So the $d_i$ are well-defined.

Now let $D = \text{diag}(d_1, \ldots, d_n)$ and define $B_{ij} = \frac{1}{d_i} A_{ij}$. Then $A = DB$, with $D$ diagonal and non-singular. So it's sufficient to prove that $B$ is symmetric. That is, \begin{equation*}\dfrac{1}{d_j} A_{ji} = \dfrac{1}{d_i} A_{ij}\end{equation*} If $i = j$ the condition is satisfied trivially, and if $A_{ij} = 0$ then $A_{ji} = 0$ also so the condition is still satisfied. 

Suppose hence $i \neq j, A_{ij} \neq 0$. Take a path $1 = j_1, \ldots, j_r = i$ from $1$ to $i$. Then $j_1, \ldots, j_r, j$ is a path from $1$ to $j$. We can use these paths to obtain $d_i, d_j$ respectively, and doing so gives \begin{equation*}d_j = \frac{A_{ji}}{A_{ij}} d_i\end{equation*} Rearranging this gives the condition for $B$ to be symmetric. So $A$ is symmetrisable. \end{proof}

\begin{proof}[Alternative Proof] Suppose $A$ is symmetrisable and write $A = DB$ where $D = \text{diag}(d_1, \ldots, d_n)$ is diagonal and $B$ is symmetric. Then $A_{ij} = d_i B_{ij}$ and so $d_j = \frac{A_{ji}}{A_{ji}} d_i$. Hence all $d_i$ have the same sign (as $A_{ij}$ and $A_{ji}$ always have the same sign and again assuming indecomposability), and without loss of generality by multiplying through by $-1$ they are all positive. Now $A^T = (DB)^T = BD$. Thus $B = D^{-1} A$ and $B = A^T D^{-1}$, and hence $A = D A^T D^{-1}$. So $A, A^T$ are equivalent.

Suppose conversely $A, A^T$ are equivalent and write $A = D A^T D^{-1}$ where $D$ is diagonal (and non-singular). Let $B = A^T D^{-1}$ so that $A = DB$. Then $B^T = D^{-1} A = D^{-1} D A^T D^{-1} = B$. So $B$ is symmetric, and $A$ is symmetrisable. \end{proof}

We note also that when using the cyclic product definition it is sufficient to only consider the case in which $i_1, \ldots, i_k, i_1$ is a circuit in the Dynkin diagram of $A$, since otherwise there exists some $j$ such that $A_{i_j i_{j+1}} = A_{i_{j+1} i_j}$ and so both sides of the condition are equal to zero.

\begin{definition} (\cite{Kac}, p.30) Let $A$ be a generalised Cartan matrix and $\mathfrak{g}(A)$ its associated Kac-Moody algebra. Then the dual of $\mathfrak{g}(A)$ is the Kac-Moody algebra associated to $A^T$.\end{definition}

Note that $\mathfrak{g}(A)$ and its dual have the same Weyl group (up to isomorphism) as the Weyl group is determined only by the values of $A_{ij} A_{ji} = (A^T)_{ij} (A^T)_{ji}$. This is not necessarily the same as the dual representation of the Cartan  representation. 

Now we have a classification theorem for indecomposable GCMs. Where $u \in \mathbb{R}^n$ is a vector, we will write $u > 0$ to mean that each component of $u$ is positive, and similarly $u \geq 0, u < 0$ or $u \leq 0$.

\begin{theorem} \label{thm:gcmclassification} (\cite{Carter}, pp.336-337) Let $A$ be an indecomposable $n \times n$ generalised Cartan matrix. Then exactly one of the following three cases holds for $A$: 

\begin{itemize}
    \item  $\det A \neq 0$, there exists $u > 0$ with $Au > 0$, and $Au \geq 0$ implies $u > 0$ or $u = 0$.

 \item  $A$ has rank $n-1$, there exists $u > 0$ such that $Au = 0$, and $Au \geq 0$ implies $Au = 0$.

 \item There exists $u > 0$ such that $Au < 0$, and $Au \geq 0, u \geq 0$ imply $u = 0$.
\end{itemize}
\end{theorem}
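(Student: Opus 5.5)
The plan is the standard argument via the theory of linear inequalities, as in Kac's Chapter 4. The inputs are the structure of $A$ and one convex-duality result. The structure: the off-diagonal entries of $A$ are $\le 0$, and $A$ is indecomposable. The duality result is the theorem of the alternative (a form of Gordan/Farkas): for a real $n\times n$ matrix $M$, either there is $u>0$ with $Mu<0$, or there is $v\geq 0$, $v\neq 0$ with $M^Tv \geq 0$, and not both.

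\textbf{Step 1 (positivity lemma).} If $u \ge 0$ and $Au \ge 0$, then either $u=0$ or $u>0$. Suppose $u\ge0$, $u\ne0$, and let $I=\{i: u_i=0\}$. For $i\in I$,
\[
(Au)_i=\sum_{j\notin I}A_{ij}u_j\le 0 .
\]
Since $Au \ge 0$, each such term vanishes, so $A_{ij}=0$ for all $i\in I$, $j\notin I$. By the definition of a generalised Cartan matrix this gives $A_{ji}=0$ as well. Indecomposability then forces $I=\emptyset$.

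\textbf{Step 2 (trichotomy).} Consider the cone $K=\{u: Au\ge 0\}$. If $K$ contains a vector $u\ge0$ with $u \ne 0$, then by Step 1 $u>0$, and we split on the rank of $A$.

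In the first sub-case, $K$ contains some $u$ with $u>0$ and $Au\neq0$ (hence $Au \ge 0$, $Au\ne 0$). I would show that $A$ is of finite type. The plan is to perturb $u$ to get $Au>0$ and $\det A\ne0$, using that $\ker A$ cannot meet the interior of $K$ badly. The third condition is shown as follows. Suppose $Aw\ge0$ but $w\not\geq 0$. Take $t$ maximal with $u+tw \ge 0$ after a suitable sign choice. Then $u+tw\ge 0$ has a zero entry, and $A(u+tw) \ge 0$, so Step 1 gives $u+tw=0$. This contradicts $Au\ne0$ versus $Aw$, or it shows directly that $w$ is a negative multiple of $u$; either way we obtain the desired implication. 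For $\det A\ne0$: a kernel vector $w$ satisfies $Aw=0\ge0$ and $A(-w)\ge0$, so both $w$ and $-w$ are $>0$ or $0$, hence $w=0$.

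In the second sub-case every such $u$ has $Au=0$, which gives affine type. The same maximal-$t$ argument shows that $\{u : Au\ge0\}$ is the line $\mathbb{R}u$. Hence $\ker A=\mathbb{R}u$ and the rank is $n-1$.

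If $K$ contains no nonzero $u\ge0$, apply the alternative to $M=A^T$. This yields $u>0$ with $A u<0$ (transposing appropriately), and the implication ``$Au\ge0$, $u\ge0 \Rightarrow u=0$'' holds by assumption. This is indefinite type.

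\textbf{Step 3 (exclusivity and transpose compatibility).} The three cases are mutually exclusive. Indeed, the $u>0$ with $Au<0$ in case three contradicts the existence of $u>0$ with $Au \ge 0$ in the other two, and the first two are separated by $\det A$. One subtlety is that the alternative naturally produces a statement about $A^T$. To handle it, I would run Step 2 for $A^T$ as well and note that $A^T$ is also indecomposable and a generalised Cartan matrix, so the classifications of $A$ and $A^T$ must match.

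\textbf{Main obstacle.} I expect the main obstacle to be the finite case: obtaining a strictly positive $Au$ rather than merely $Au\ge0$, $Au\neq 0$. My first attempt would be to apply the alternative to $-A$. If no $u>0$ gives $Au>0$, then there is $v\ge0$, $v \ne 0$ with $A^Tv\le 0$. Pairing, $v^TAu\le 0$, while $v^TAu \ge 0$, so equality holds. Combined with Step 1 for $A^T$, this should force $Au=0$, a contradiction.
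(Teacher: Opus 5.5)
The paper gives no proof of this theorem; it is quoted from Carter. So your plan is measured against the standard Kac/Carter argument, which it follows. Step 1 and the finite and affine sub-cases are essentially sound. In the finite case, once you have the implication and $\ker A=0$, you can simply take $u=A^{-1}(1,\dots,1)^T$, so your ``main obstacle'' is not one. In the affine case, for $w\in K$ with $w\ge 0$, run the maximal-$t$ argument on $w-su$ with $s$ large.

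\textbf{The gap is in the indefinite case.} Under the hypothesis $K\cap\{u\ge 0\}=\{0\}$, the alternative with $M=A^T$ gives $w>0$ with $A^Tw<0$, not $Au<0$. No transposition converts one into the other. To get $Au<0$ you must apply the alternative to $M=A$. That requires knowing there is no $v\ge 0$, $v\ne 0$ with $A^Tv\ge 0$, which is a statement about the cone of $A^T$, not of $A$. Your Step 3 says ``the classifications of $A$ and $A^T$ must match'' because $A^T$ is also an indecomposable generalised Cartan matrix, but this does not follow. That fact only lets you run Step 2 for $A^T$ separately. Nothing you have said stops $A$ from landing in the third branch while $A^T$ lands in the first or second. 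In Kac, the matching of types of $A$ and $A^T$ is a consequence of this theorem, so it cannot be used as an input.

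\textbf{The missing argument is short and uses only your own tools.} It is exactly the claim you state without proof in Step 3: a vector $w>0$ with $Mw<0$ is incompatible with a vector $v>0$ with $Mv\ge 0$. Apply it to $M=A^T$.
\begin{itemize}
\item Suppose $v\ge 0$, $v\ne 0$ and $A^Tv\ge 0$. Step 1 for $A^T$ gives $v>0$.
\item Let $t^*=\min_i v_i/w_i$. Then $v-t^*w\ge 0$ has a zero entry, and $A^T(v-t^*w)=A^Tv-t^*A^Tw>0$.
\item Step 1 therefore forces $v=t^*w$, which contradicts $A^T(v-t^*w)>0$.
\end{itemize}
Hence the cone of $A^T$ meets $\{u\ge 0\}$ only in $0$, and the alternative for $M=A$ now gives $u>0$ with $Au<0$.

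Separately, for exclusivity you should use the implication clause of case three (``$Au\ge0$, $u\ge0\Rightarrow u=0$''), which is immediate. Do not rely on the incompatibility claim you asserted without proof. With these two changes the proof is complete.
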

We say that $A$ has finite type if the first case holds, affine type if the second holds, and indefinite type if the third holds. 

If $A$ has finite type, then the Kac-Moody algebra $\mathfrak{g}(A)$ is in fact a finite-dimensional complex semisimple Lie algebra, but if $A$ has affine or indefinite type then $\mathfrak{g}(A)$ is infinite-dimensional. 

Now it can also be shown (\cite{Carter}, p.350) that a generalised Cartan matrix of finite type has positive determinant. Generalised Cartan matrices of affine type have determinant $0$ by definition. So any generalised Cartan matrix with negative determinant must be of indefinite type. 

However, the converse is false. For instance, the GCM 

\begin{equation*}\begin{pmatrix}
    2 & -3 & 0 & 0\\
    -3 & 2 & -1 & 0\\
    0 & -1 & 2 &-3\\
    0 & 0 & -3 & 2\\
\end{pmatrix}\end{equation*}

has determinant $21$ and the GCM 
\begin{equation*}\begin{pmatrix}
    2 & 0 & -1 & -3\\
    0 & 2 & -3 & -1\\
    -1 & -3 & 2 & 0\\
    -3 & -1 & 0 &2\\
\end{pmatrix}\end{equation*}

has determinant $0$. Further, both of these have indefinite type (as the classification of finite and affine indecomposable GCMs is known).

\chapter{Some Illustrative Examples}

\label{chap:3}

Let's consider some small examples of Cartan subalgebra representations. The smallest non-trivial case is that of a $2 \times 2$ generalised Cartan matrix, $A = \begin{pmatrix}2 & -m\\-n & 2\\\end{pmatrix}$ where $m, n$ are non-negative integers, and $m = 0$ if and only if $n = 0$. In fact in the $m = n = 0$ case the GCM decomposes as two copies of the unique $1 \times 1$ GCM, so we can suppose $m, n$ are positive. 

There are four possible Weyl groups of $\mathfrak{g}(A)$, which are the Coxeter groups with the following graphs:

 \begin{figure}[h]
\centering
\begin{tikzcd}
\bullet  \arrow[r, dash] & \bullet
\end{tikzcd}
\end{figure}
\begin{figure}[h]
\centering
\begin{tikzcd}
\bullet  \arrow[r, dash, "4"] & \bullet
\end{tikzcd}
\end{figure}
\begin{figure}[h]
\centering
\begin{tikzcd}
\bullet  \arrow[r, dash, "6"] & \bullet
\end{tikzcd}
\end{figure}
\begin{figure}[h!]
\centering
\begin{tikzcd}
\bullet  \arrow[r, dash, "\infty"] & \bullet
\end{tikzcd}
\end{figure}

The first three of these are finite, and are isomorphic to the dihedral groups $\text{Dih}(6), \text{Dih}(8)$ and $\text{Dih}(12)$ respectively. The fourth is the "infinite dihedral group" $\text{Dih}(\infty) = <s_1, s_2|s_1^2, s_2^2>$. All four of these groups are crystallographic, by Theorem \ref{thm:crystalcond}.

Let's also list the possible Dynkin diagrams. There's only a single one with $\text{Dih}(6)$ for a Weyl group:

\begin{figure}[h!]
\centering
\begin{tikzcd}
\bullet  \arrow[r, dash] & \bullet
\end{tikzcd}
\end{figure}

corresponding to the case $m = n = 1$. We obtain two cases with $\text{Dih}(8)$ for a Weyl group, 

\begin{figure}[h]
\centering
\begin{tikzcd}
\bullet \arrow[r, Rightarrow] & \bullet
\end{tikzcd}
\end{figure}

\begin{figure}[h]
\centering
\begin{tikzcd}
\bullet  & \bullet \arrow[l, Rightarrow]
\end{tikzcd}
\end{figure}

which correspond to $\begin{pmatrix}2 & -1\\-1 & 2\\\end{pmatrix}$ and its transpose. Similarly for $\text{Dih}(12)$ we have two cases given by a triple edge in each direction, $m = -3, n = -1$ and $m = -1, n = -3$.

All GCMs with $mn \geq 4$ have $\text{Dih}(\infty)$ for a Weyl group. There are infinitely many of these. We'll show the Dynkin diagrams for the case $mn = 4$:

\begin{figure}[h]
\centering
\begin{tikzcd}
\bullet  \arrow[r, Rightarrow, shift right] \arrow[r, Rightarrow, shift left]& \bullet
\end{tikzcd}
\end{figure}
\begin{figure}[h!]
\centering
\begin{tikzcd}
\bullet & \bullet \arrow[l, Rightarrow, shift right] \arrow[l, Rightarrow, shift left] 
\end{tikzcd}
\end{figure}

\begin{figure}[h!]
\centering
\begin{tikzcd}
\bullet  \arrow[r, dash] \arrow[r, dash, shift left] \arrow[r, dash, shift right] \arrow[r, dash, shift left=2] & \bullet
\end{tikzcd}
\end{figure}

Note that these GCMs are all symmetrisable, as none of the Dynkin diagrams contain cycles. So in particular the two GCMs with $mn = 2$ are equivalent by Lemma \ref{lem:symmcond}, as are those with $mn = 3$ and the two directed cases with $mn = 4$. We can check that the non-directed $mn = 4$ case is also equivalent to these, using the diagonal matrix $D = \begin{pmatrix}1 & 0\\0 & 2\\\end{pmatrix}$. 

In fact, more generally, any two $2 \times 2$ generalised Cartan matrices $A, B$ with $A_{12} A_{21} = B_{12} B_{21}$ are equivalent. This is because the only possible cyclic product is $A_{12} A_{21}$. So the cyclic products for $A, B$ are all equal. And conversely if $A_{12} A_{21} \neq B_{12} B_{21}$ then the GCMs cannot be equivalent.

Let $\text{Dih}(6) = <s, t|s^2, t^2, (st)^3>$. Then its action on its reflection representation $R = <e_1, e_2>$ is given by $s \cdot e_1 = - e_1, t \cdot e_2 = - e_2, s \cdot e_2 = e_1 + e_2, t \cdot e_1 = e_1 + e_2$. This is the same action as the action on the Cartan representation, as determined by the GCM $\begin{pmatrix}2 & -1\\-1 & 2\\\end{pmatrix}$.

The case of $\text{Dih}(8) = <s,t|s^2, t^2, (st)^4>$ is more interesting: the action on the reflection representation $R$ is given by $s \cdot e_1 = - e_1, t \cdot e_2 = - e_2, s \cdot e_2 = \sqrt{2} e_1 + e_2, t \cdot e_1 = e_1 + \sqrt{2} e_2$. And there are two Cartan representations $H_1, H_2$ corresponding to the matrices $\begin{pmatrix}2 & -1\\-2 & 2\\\end{pmatrix}$ and $\begin{pmatrix}2 & -2\\-1 & 2\\\end{pmatrix}$.

It turns out that these three representations are all isomorphic: define \begin{equation*}\varphi_1: R \to H_1, \varphi_1(e_1) = \alpha_1, \varphi_1(e_2) = \sqrt{2} \alpha_2\end{equation*} and \begin{equation*}\varphi_2: R \to H_2, \varphi_2(e_1) = \sqrt{2} \alpha_1, \varphi_2(e_2) = e_2\end{equation*} And similarly, the representations of $\text{Dih}(12)$ are isomorphic both to each other and to the reflection representation of $\text{Dih}(12)$.

In fact, we claim that the Cartan representations corresponding to $\begin{pmatrix}2 & -a\\-b & 2\\\end{pmatrix}$ and $\begin{pmatrix}2 & -m\\-n & 2\\\end{pmatrix}$ are isomorphic if and only if $ab = mn$.

\begin{proof} It suffices to prove the case $mn \geq 4$ where the Weyl group is $\text{Dih}(\infty)$. We'll show that $A = \begin{pmatrix}2 & -m\\-n & 2\\\end{pmatrix}$ and $B = \begin{pmatrix}2 & -1\\-mn & 2\\\end{pmatrix}$ give isomorphic representations. Let $H_1, H_2$ be the two representations and have standard bases $\alpha_1, \alpha_2$ and $\beta_1, \beta_2$ respectively, and $s_1, s_2$ generate $\text{Dih}(\infty)$.

Define $\varphi: H_1 \to H_2$ by \begin{equation*}\varphi(\alpha_1) = \beta_1, \varphi(\alpha_2) = m \beta_2\end{equation*} Then \begin{equation*}s_i \cdot \varphi(\alpha_i) = \varphi (s_i \cdot \alpha_i)\end{equation*} for $i = 1, 2$. \begin{equation*}s_1 \cdot \varphi(\alpha_2) = s_1 \cdot m \beta_2 = m \beta_2 + m \beta_1\end{equation*} and \begin{equation*}\varphi(s_1 \cdot \alpha_2) = \varphi(\alpha_2 + m \alpha_1) = m \beta_2 + m \beta_1\end{equation*} 

Finally \begin{equation*} s_2 \cdot \varphi(\alpha_1) = s_2 \cdot \beta_1 = \beta_1 + mn \beta_2\end{equation*} and \begin{equation*} \varphi(s_2 \cdot \alpha_1) = \varphi(\alpha_1 + n\alpha_2) = \beta_1 + mn \beta_2\end{equation*}

So we have the required isomorphism. (Proof of the converse is omitted but will follow from the general results in Chapter \ref{chap:5}).
\end{proof}

Also, the reflection representation of $\text{Dih}(\infty)$ is isomorphic to a Cartan representation in precisely the case $mn = 4$: if it has basis $e_1, e_2$ then $s_i \cdot e_i = -e_i$, $s_1 \cdot e_2 = 2e_1 + e_2$,  $s_2 \cdot e_1 = e_1 + 2e_2$. This is the same action as that of the representation corresponding to $\begin{pmatrix}2 & -2\\-2 & 2\\\end{pmatrix}$. 

More interesting cases arise when we permit the Coxeter graph to contain cycles. For instance, let's consider the Coxeter graph $\Gamma$: 

 \begin{figure}[h]
\centering
\begin{tikzcd}
& \bullet \arrow[ddr, dash, "4"] \arrow[ddl, dash, "4"] & \\
& & \\
\bullet \arrow[rr, dash, "4"] & & \bullet\\
\end{tikzcd}
\end{figure}

This is the smallest (in terms of the order and size of the graph, and the edge labels) example of a Coxeter group $W$ which is not crystallographic (by Theorem 2.3, as the cycle in $\Gamma$ contains an odd number of edges of label $4$.) So none of the Cartan representations can be isomorphic to the reflection representation.

There are in fact eight such representations, corresponding to two choices of direction for each of the three double edges in the Dynkin diagram. We reproduce some of them below. Two of the representations have all edges in the same direction. One of them is the below:

\begin{figure}[h]
\centering

\begin{tikzcd}
    & \bullet \arrow[ddr, Rightarrow] & \\
    & & \\
    \bullet \arrow[uur, Rightarrow] & & \bullet \arrow[ll, Rightarrow]\\
\end{tikzcd}
\scalebox{1.25}{
$\begin{pmatrix}2 & -2 & -1\\-1 & 2 & -2\\-2 & -1 & 2\\\end{pmatrix}$
}
\end{figure}

shown together with its GCM. 

The remaining six representations have two edges in one direction and the last edge in the opposite direction, such as: 

\begin{figure}[h]
\centering
\begin{tikzcd}
    & \bullet \arrow[ddr, Rightarrow] & \\
    & & \\
    \bullet \arrow[uur,Rightarrow] \arrow[rr, Rightarrow] & & \bullet \\
\end{tikzcd}
\scalebox{1.25}{
$\begin{pmatrix}2 & -2 & -1\\-1 & 2 & -1\\-2 & -2 & 2\\\end{pmatrix}$
}
\end{figure}

Fix some labelling of the vertices as $1,2,3$. $A_{ij} A_{ji} = 2$ for all $i, j$ for each representation, so equivalence of the GCMs $A$ and $B$ with this Weyl group is determined only by whether $A_{12} A_{23} A_{31} = B_{12} B_{23} B_{31}$. 

In fact there are four possible values of the cyclic product $A_{12} A_{23} A_{31}$: 

\begin{itemize}
\item $-8$ (three clockwise edges, one representation),
\item $-4$ (two clockwise edges and one anticlockwise edge, three representations),
\item $-2$ (two anticlockwise edges and one clockwise edge, three representations), and,
\item $-1$ (three anticlockwise edges, one representation). 
\end{itemize}

And we can check that these also give the isomorphism classes of the representations, so we have four isomorphism classes corresponding to these values. 

We can transform between the Dynkin diagrams of isomorphic representations by "flipping" edges: where a vertex is a source, with two edges entering it, we can replace it by a sink, with two edges leaving it, and conversely. This corresponds to e.g. dividing $A_{12}$ by $2$ and multiplying $A_{21}$ by $2$, and dividing $A_{13}$ by $2$ and multiplying $A_{31}$ by $2$, which will preserve the cyclic products. 

Let's now consider the example of the Coxeter graph $\Gamma$

\begin{figure}[h]
\centering
\begin{tikzcd}
    & \bullet \arrow[ddr, dash, "\infty"] & \\
    & & \\
    \bullet \arrow[uur, dash, "\infty"] \arrow[rr, dash, "\infty"] & & \bullet \\
\end{tikzcd}
\end{figure}

This Coxeter group $W$ is crystallographic (as it contains no edges with label $4$ or $6$). It has infinitely many Cartan representations. As above, isomorphism of these representations requires $A_{ij} A_{ji}$ to be preserved for each $i, j$. So let's restrict to the case of $A_{ij} A_{ji} = 4$ for all $i, j$. 

There are now $27$ different possible Dynkin diagrams: each edge can be directed clockwise, directed anticlockwise or undirected. For ease of notation, let's write $a$ for an undirected edge, $b_+$ for a clockwise edge and $b_-$ for an anticlockwise edge. 

Again we can classify the representations according to the cyclic product $A_{12} A_{23} A_{31}$. This now takes possible values $-1,-2,-4,-8,-16,-32, -64$. As $A_{12} A_{23} A_{31} A_{21} A_{32} A_{13}$ is always $64$, the cases $-1, -64$, $-2, -32$ and $-4, -8$ correspond bijectively to each other by reversing the arrows in the diagram, or equivalently by transposing the GCM. 

In the case $A_{12} A_{23} A_{31} = -16$, also $A_{21} A_{32} A_{13} = -16$. So these GCMs are all symmetrisable. There are seven of these Dynkin diagrams: 
\pagebreak
\begin{figure}[h!]
\centering
\begin{tikzcd}
    & \bullet \arrow[ddr, dash, "a"] & \\
    & & \\
    \bullet \arrow[uur, dash, "a"] \arrow[rr, dash, "a"] & & \bullet \\
\end{tikzcd}
\end{figure}

and six with one $a$, one $b_+$ and one $b_-$ such as 

\begin{figure}[h!]
\centering
\begin{tikzcd}
    & \bullet \arrow[ddr, dash, "a"] & \\
    & & \\
    \bullet \arrow[uur, dash, "b_+"] \arrow[rr, dash, "b_-"] & & \bullet \\
\end{tikzcd}
\end{figure}

The corresponding Cartan representations are all isomorphic: we can show that the process as above of e.g. multiplying $A_{12}, A_{13}$ by $2$ and dividing $A_{21}, A_{31}$ by $2$ gives rise to an isomorphism. In this case it could transform neighbouring edges $a, a$ into $b_+, b_-$ and vice versa, or $a, b_+$ into $b_-, a$ and vice versa, or $a, b_-$ into $b_+, a$ and vice versa. 

Further, the diagram with all $a$s gives rise to the representation defined by $s_i \cdot \alpha_j = - \alpha_j$ if $i = j$ and $\alpha_j + 2 \alpha_i$ otherwise, which is isomorphic to the reflection representation of $W$. So in fact all seven of these are isomorphic to the reflection representation. 

Meanwhile, the other twenty representations have isomorphism types classified by the cyclic product, and so in particular none of them are isomorphic to the symmetrisable case or the reflection representation. 

The Dynkin diagrams are given as follows:
\begin{itemize} 
\item One each with $A_{12} A_{23} A_{31} = -1, -64$ (all $b_+$, all $b_-$ respectively).
\item Six with $A_{12} A_{23} A_{31} = -2$ (three with two $a$s and a $b_+$ and three with two $b_+$s and a $b-$).
\item Similarly by exchanging $b_+, b_-$ six with $A_{12} A_{23} A_{31} = -32$.
\item Three with $A_{12} A_{23} A_{31} = -4$ (two $b_+$s and an $a$).
\item Three with $A_{12} A_{23} A_{31} = -16$ (two $b_-$s and an $a$). 
\end{itemize}

So this means we have seven isomorphism classes of representations, corresponding to the values $-1,-2,-4,-8,-16,-32,-64$ of the cyclic product.

\chapter{The Reflection Representation}

\label{chap:4}
This chapter sets out and proves necessary and sufficient conditions for the reflection and Cartan representations of the Weyl group of a Kac-Moody algebra to be isomorphic, and also what these conditions mean from the perspective of Coxeter groups. 

We use the following notation throughout: $A$ is an $n \times n$ generalised Cartan matrix (without loss of generality indecomposable), $W$ is its Weyl group with generators $s_1, \ldots, s_n$, $\alpha_1, \ldots, \alpha_n$ are the simple roots which form a basis of the Cartan representation $H$ of $W$ (over $\mathbb{R}$) and $e_1, \ldots, e_n$ are the standard basis of the reflection representation $R$ of $W$.

The action of $W$ on $H$ is determined by $s_i \cdot \alpha_j = \alpha_j - A_{ij} \alpha_i$, and the action of $W$ on $R$ is determined by $s_i \cdot e_j = e_j + b_{ij} e_i$ where $b_{ij} = -2(e_i, e_j)$ as in Chapter \ref{chap:2}. We also have, following from the discussion in Chapter \ref{chap:2}, the following relationship between $A_{ij} A_{ji}$ and $b_{ij}$ (where $i \neq j$): 

\begin{center}
\begin{tabular}{|c|c|c|}
 \hline
 $A_{ij} A_{ji}$ & $m_{ij}$ & $b_{ij}$\\
 \hline
 $0$ & $2$ & $0$\\
 \hline
 $1$ & $3$ & $1$\\
 \hline
 $2$ & $4$ & $\sqrt{2}$\\
 \hline
 $3$ & $6$ & $\sqrt{3}$\\
 \hline
 $\geq 4$ & $\infty$ & $2$\\
 \hline
\end{tabular}
\end{center}

The approach is as follows: we'll assume that $\varphi: H \to R$ is an isomorphism, and prove several conditions which must be satisfied for this to be possible. This will give a unique possible definition of $\varphi$ (up to rescaling). Then we'll show that this in fact is well-defined, subject to the conditions proved, and thus that the conditions are also sufficient.

\begin{proposition}\label{prop:isomorphscalar} Let $\varphi: H \to R$ be an isomorphism of representations. Then for each $i$, $\varphi(\alpha_i) = \lambda_i e_i$ where $\lambda_i \in \mathbb{R} \backslash \left\{0\right\}$.\end{proposition}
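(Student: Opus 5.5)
The plan is to characterise the line spanned by each simple root intrinsically, in terms of the action of the single generator $s_i$, and then use that $\varphi$ is $W$-equivariant. In both representations $s_i$ acts as a reflection. On $H$, $s_i \cdot \alpha_j = \alpha_j - A_{ij}\alpha_i$, so $s_i(v) - v$ lies in $\mathbb{R}\alpha_i$ for every $v \in H$, and $s_i \cdot \alpha_i = -\alpha_i$. Thus the image of the linear map $s_i - 1$ on $H$ is exactly the line $\mathbb{R}\alpha_i$; equivalently, it is the $(-1)$-eigenspace of $s_i$. Similarly, on $R$ we have $s_i \cdot e_j = e_j + b_{ij} e_i$ and $s_i \cdot e_i = -e_i$, so the image of $s_i - 1$ on $R$ is exactly $\mathbb{R} e_i$. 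In each case the image is nonzero because $(s_i - 1)$ sends the basis vector to $-2$ times itself.

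The key step is then immediate from equivariance. Since $\varphi \circ \rho_H(s_i) = \rho_R(s_i) \circ \varphi$, we have
\begin{equation*}
\varphi \circ (\rho_H(s_i) - 1) = (\rho_R(s_i) - 1) \circ \varphi .
\end{equation*}
Because $\varphi$ is bijective, the image of the right-hand side equals the image of $\rho_R(s_i) - 1$, namely $\mathbb{R} e_i$. The image of the left-hand side is $\varphi(\mathbb{R}\alpha_i)$. Hence $\varphi(\alpha_i) \in \mathbb{R} e_i$, say $\varphi(\alpha_i) = \lambda_i e_i$. Finally, $\lambda_i \neq 0$ because $\varphi$ is injective and $\alpha_i \neq 0$.

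Alternatively, one can argue via eigenspaces. We have $\varphi(\alpha_i)$ satisfies $s_i \cdot \varphi(\alpha_i) = \varphi(s_i \cdot \alpha_i) = -\varphi(\alpha_i)$, so $\varphi(\alpha_i)$ lies in the $(-1)$-eigenspace of $s_i$ on $R$. This requires the extra check that this eigenspace is one-dimensional. That follows since $s_i$ fixes the hyperplane orthogonal to $e_i$, or, in coordinates, since $s_i v = -v$ forces $2v = v - s_i v \in \mathbb{R}e_i$.

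The only point needing any care is verifying that $s_i - 1$ has rank exactly one in each representation, including characteristic-free concerns. Over $\mathbb{R}$ this is trivial, since $-2 \neq 0$. I expect no real obstacle: the argument uses only the explicit formulas for the two actions stated above, not the table relating $A_{ij}A_{ji}$ to $b_{ij}$.
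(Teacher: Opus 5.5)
Your proof is correct and is essentially the paper's argument. Your ``alternative'' eigenspace proof is exactly what the paper does, and you also justify that the $(-1)$-eigenspace of $s_i$ on $R$ is one-dimensional, which the paper only asserts; your main version via the image of $s_i - 1$ is a light repackaging of the same idea, with the small bonus that it still works in characteristic $2$ (using an odd $A_{ij}$ in place of $-2 \neq 0$), where it is essentially the separate argument the paper gives in Proposition~\ref{prop:isomorphscalarmodp}.
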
 

\begin{proof} We require \begin{equation*}s_i \cdot \varphi(\alpha_i) = \varphi(s_i \cdot \alpha_i) = \varphi(- \alpha_i) = - \varphi(\alpha_i)\end{equation*} So $\varphi(\alpha_i)$ belongs to the $-1$-eigenspace of the action of $s_i$. But this eigenspace is one-dimensional and spanned by $e_i$. And $\varphi(\alpha_i)$ cannot be zero as $\ker \varphi$ is trivial. \end{proof}

This result means that an isomorphism, if it exists, is determined by the choices of $\lambda_i$. Also, we can deduce relations between the $\lambda_i$:

\begin{lemma} \label{lem:isomorphcond} Suppose $i \neq j, A_{ij} \neq 0$. Then $\lambda_i = \frac{B_{ij}}{A_{ij}} \lambda_j$, where $B$ is the matrix with entries $B_{ij} = -b_{ij}$.\end{lemma}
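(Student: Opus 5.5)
We use the equivariance of $\varphi$ applied to $s_j$ acting on $\alpha_i$, together with Proposition \ref{prop:isomorphscalar}. By that proposition, $\varphi(\alpha_i) = \lambda_i e_i$ and $\varphi(\alpha_j) = \lambda_j e_j$ with both scalars non-zero. Since $\varphi$ is a homomorphism of representations, we must have
\begin{equation*}
\varphi(s_j \cdot \alpha_i) = s_j \cdot \varphi(\alpha_i).
\end{equation*}
The plan is to compute both sides in the basis $e_1, \ldots, e_n$ and compare coefficients of $e_j$.

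For the left-hand side, the Cartan action gives $s_j \cdot \alpha_i = \alpha_i - A_{ji}\alpha_j$. Applying $\varphi$ gives
\begin{equation*}
\lambda_i e_i - A_{ji}\lambda_j e_j .
\end{equation*}
For the right-hand side, $s_j \cdot \lambda_i e_i = \lambda_i(e_i + b_{ji} e_j) = \lambda_i e_i - B_{ji}\lambda_i e_j$. Since $i \neq j$, the vectors $e_i, e_j$ are linearly independent, so comparing $e_j$-coefficients yields
\begin{equation*}
A_{ji}\lambda_j = B_{ji}\lambda_i .
\end{equation*}

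This is the relation with the indices in the opposite roles. Since $A_{ij}\neq 0$ is equivalent to $A_{ji} \neq 0$, and likewise $B_{ij} \neq 0$ by the table, the statement is symmetric in $i$ and $j$. We therefore carry out the computation with the roles swapped: apply $s_i$ to $\alpha_j$. Comparing $e_i$-coefficients gives $A_{ij}\lambda_i = B_{ij}\lambda_j$, that is, $\lambda_i = \frac{B_{ij}}{A_{ij}}\lambda_j$, which is exactly the claimed form once we divide by $A_{ij} \neq 0$.

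The only step needing care is bookkeeping. The Cartan action is written $s_i\cdot\alpha_j = \alpha_j - A_{ij}\alpha_i$, while the reflection action is $s_i \cdot e_j = e_j + b_{ij}e_i = e_j - B_{ij}e_i$. So the two conventions match index for index, and the relation read off from $s_i$ acting on the $j$th basis vector is the one stated. Note also that $B$ is symmetric. There is no real obstacle beyond this: the hypothesis $A_{ij}\neq 0$ is used only to divide, and $i \neq j$ is used only to separate the $e_i$- and $e_j$-components.
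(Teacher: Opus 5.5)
Your proof is correct and, once you get past the initial detour through $s_j\cdot\alpha_i$ (which you can simply drop), it is the paper's argument. You apply equivariance to $s_i\cdot\alpha_j$, expand both sides using $\varphi(\alpha_k)=\lambda_k e_k$, and compare $e_i$-coefficients to get $A_{ij}\lambda_i = B_{ij}\lambda_j$, then divide by $A_{ij}\neq 0$.
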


\begin{proof} \begin{equation*}s_i \cdot \varphi(\alpha_j) = \lambda_j s_i \cdot e_j = \lambda_j e_j - \lambda_j B_{ij} e_i\end{equation*} And \begin{equation*}\varphi(s_i \cdot \alpha_j) = \varphi(\alpha_j - A_{ij} \alpha_i) = \lambda e_j - A_{ij} \lambda_i e_i\end{equation*} Comparing coefficients of $e_i$ and noting $A_{ij} \neq 0$ gives the result. \end{proof}

Since $A$ is indecomposable, this means that $\varphi$, if it exists, is uniquely determined up to rescaling. So $H$ and $R$ are isomorphic if and only if these values of $\lambda_i$ are well-defined. 

The first check is that the definitions $\lambda_i = \frac{B_{ij}}{A_{ij}} \lambda_j$ and $\lambda_j = \frac{B_{ji}}{A_{ji}} \lambda_i$ are compatible. Substituting the first definition into the second and rearranging gives the condition \begin{equation*}A_{ij} A_{ji} = (B_{ij})^2 = (b_{ij})^2\end{equation*} by symmetry of $B$. 

We can use the table above to check this for each possible value of $A_{ij} A_{ji}$ and see that it is satisfied precisely when $A_{ij} A_{ij} \in \left\{1,2,3,4\right\}$. 

\begin{theorem} Suppose $\varphi: H \to R$ is an isomorphism. Then $A$ is symmetrisable. \end{theorem}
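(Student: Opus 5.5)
Given the isomorphism $\varphi: H \to R$, Proposition \ref{prop:isomorphscalar} gives nonzero scalars $\lambda_1, \ldots, \lambda_n$ with $\varphi(\alpha_i) = \lambda_i e_i$. Lemma \ref{lem:isomorphcond} then says that whenever $A_{ij} \neq 0$ we have $A_{ij}\lambda_i = B_{ij}\lambda_j$, where $B$ is the symmetric matrix with entries $B_{ij} = -b_{ij}$ for $i\neq j$. The plan is to exhibit $A$ directly in the form (diagonal)$\times$(symmetric), rather than to verify the cyclic product condition of Lemma \ref{lem:symmcond}. Verifying cyclic products would also work: multiply the relations around a circuit and use $B_{ij}=B_{ji}$, at the cost of taking care with the $\lambda$'s cancelling. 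The direct construction seems cleaner.

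To produce the diagonal form, I would set $\Lambda = \mathrm{diag}(\lambda_1, \ldots, \lambda_n)$. The relations, valid also when $A_{ij}=0$ (where $b_{ij}=0$ too), read $A_{ij} = B_{ij}\lambda_j/\lambda_i$ for $i \neq j$. That is,
\begin{equation*}A = \Lambda^{-1} C \Lambda,\end{equation*}
where $C$ is the symmetric matrix with $C_{ii}=2$ and $C_{ij}=B_{ij}$ off the diagonal. I would check this entrywise: the diagonal entries give $2=2$, and the off-diagonal entries are exactly Lemma \ref{lem:isomorphcond}. The zero entries need the observation that $A_{ij}=0$ if and only if $m_{ij}=2$, if and only if $b_{ij}=0$, which comes from the table.

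To conclude, write $A = \Lambda^{-2}(\Lambda C \Lambda)$. Here $\Lambda^{-2}$ is a non-singular diagonal matrix and $\Lambda C\Lambda$ is symmetric, since $(\Lambda C \Lambda)^T = \Lambda C^T \Lambda = \Lambda C \Lambda$. This is precisely the definition of symmetrisable, so $A$ is symmetrisable. Alternatively, $A$ is conjugate by a diagonal matrix to a symmetric one, so $A^T = \Lambda C \Lambda^{-1} = \Lambda^2 A \Lambda^{-2}$. Since $\Lambda^2$ has positive entries, $A$ and $A^T$ are equivalent, and Lemma \ref{lem:symmcond} (second proof) applies.

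The only real obstacle is bookkeeping. First, Lemma \ref{lem:isomorphcond} is stated only for $A_{ij}\neq 0$, so I must separately justify the zero entries, as above. Second, the $\lambda_i$ are real but possibly negative, which is why I square them (or use the form $DB$ directly) rather than claiming $\Lambda$ has positive entries. No real analysis of the $\sqrt{2}$, $\sqrt{3}$ entries is needed, because $B$ enters only through its symmetry.
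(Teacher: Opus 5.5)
Your argument is correct, and it takes a different route from the paper.

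The paper works with the cyclic-product characterisation of Lemma \ref{lem:symmcond}. It chains the relations $\lambda_i = \frac{B_{ij}}{A_{ij}}\lambda_j$ around a circuit in the Dynkin diagram to get $A_{i_1i_2}\cdots A_{i_ki_1} = B_{i_1i_2}\cdots B_{i_ki_1}$. It then squares this and uses the compatibility relation $A_{ij}A_{ji} = b_{ij}^2$, derived just before the theorem, to arrive at $A_{i_1i_2}\cdots A_{i_ki_1} = A_{i_2i_1}\cdots A_{i_1i_k}$.

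You instead pack all the relations into the single identity $A = \Lambda^{-1}C\Lambda$. Here $C$ is just twice the Gram matrix of the standard form on $R$, and you read off $A = \Lambda^{-2}(\Lambda C\Lambda)$ directly from the definition. Your route needs no circuits or paths, no appeal to the converse half of Lemma \ref{lem:symmcond}, and no use of $b_{ij}^2 = A_{ij}A_{ji}$. It also shows the underlying mechanism: $A$ is a diagonal conjugate of a symmetric matrix, which is the same idea as the paper's alternative proof of Lemma \ref{lem:symmcond}.

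The paper's route has one advantage: along the way it produces the equality of the cyclic products of $A$ and $B$. That equality is exactly what is reused in Corollary \ref{cor:symmreflectcond} and in the converse, Theorem \ref{thm:reflectisomorph}.

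Two small remarks on your write-up:
\begin{itemize}
\item The relation $A_{ij}\lambda_i = B_{ij}\lambda_j$ holds for every $i \neq j$ straight from comparing $e_i$-coefficients, since the hypothesis $A_{ij}\neq 0$ in Lemma \ref{lem:isomorphcond} is only needed to divide. So the table is not actually needed for the zero entries.
\item Your symmetric factor $\Lambda C\Lambda$ has real, possibly irrational, entries. The definition as stated allows this. In any case, the first half of the proof of Lemma \ref{lem:symmcond} turns any real factorisation into the cyclic-product condition, and hence into a rational one.
\end{itemize}
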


\begin{proof} We use the alternate characterisation of symmetrisability set out in Lemma \ref{lem:symmcond}. As remarked in Chapter \ref{chap:2}, it is sufficient to consider the case in which $i_1, \ldots, i_k, i_1$ forms a circuit in the Dynkin diagram, i.e. $A_{i_j i_{j+1}} \neq 0$ for all $j$.

Let $\varphi(\alpha_i) = \lambda_i$ for each $i$. Then we can calculate 

\begin{equation*}\lambda_{i_1} = \dfrac{B_{i_1i_2}}{A_{i_1i_2}} \lambda_{i_2} = \ldots = \dfrac{ B_{i_1i_2} \ldots B_{i_k i_1}}{A_{i_1i_2} \ldots A_{i_k i_1}} \lambda_{i_1}\end{equation*}

But by assumption, $\lambda_{i_1}$ is well-defined and so

\begin{equation*}
    B_{i_1i_2} \ldots B_{i_k i_1} = A_{i_1i_2} \ldots A_{i_k i_1}
\end{equation*}

Note that for each $i, j$, $B_{ij}^2 = A_{ij} A_{ji}$ by the above condition. So if we square both sides of the equation we obtain 

\begin{equation*}A_{i_1i_2} \ldots, A_{i_k i_1} A_{i_2 i_1} \ldots A_{i_1 i_k} = (A_{i_1i_2} \ldots A_{i_k i_1})^2\end{equation*}

Since $A_{i_1i_2} \ldots A_{i_k i_1}$ is non-zero, we can divide by it to obtain

\begin{equation*}  A_{i_2 i_1} \ldots A_{i_1 i_k}= A_{i_1i_2} \ldots A_{i_k i_1}\end{equation*}

So, since the choice of circuit was arbitrary, $A$ is symmetrisable. \end{proof}

\begin{corollary} \label{cor:symmreflectcond} Suppose $A_{ij} A_{ji} \in \left\{0,1,2,3,4\right\}$ for all $i, j$. Then $A$ is symmetrisable if and only if \begin{equation*}
    B_{i_1i_2} \ldots B_{i_k i_1} = A_{i_1i_2} \ldots A_{i_k i_1}
\end{equation*} for all circuits $i_1, \ldots, i_k, i_1$ in the Dynkin diagram. \end{corollary}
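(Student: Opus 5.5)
The plan is to prove both directions directly from the cyclic-product characterisation of symmetrisability in Lemma \ref{lem:symmcond}. The key input is the identity $B_{ij}^2 = b_{ij}^2 = A_{ij}A_{ji}$ for all $i \neq j$. This holds under the hypothesis $A_{ij}A_{ji} \in \left\{0,1,2,3,4\right\}$ by reading off the table: $b_{ij} \in \left\{0,1,\sqrt{2},\sqrt{3},2\right\}$ respectively. As remarked after Lemma \ref{lem:symmcond}, it suffices to consider circuits $i_1, \ldots, i_k, i_1$ in the Dynkin diagram, so that every $A_{i_j i_{j+1}}$ is nonzero.

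For the reverse direction, suppose $B_{i_1i_2} \cdots B_{i_ki_1} = A_{i_1i_2}\cdots A_{i_ki_1}$ for every circuit. I would then repeat verbatim the computation in the proof of the preceding theorem. Square both sides and replace each $B_{i_ji_{j+1}}^2$ by $A_{i_ji_{j+1}}A_{i_{j+1}i_j}$. This gives
\begin{equation*}
\left(A_{i_1i_2}\cdots A_{i_ki_1}\right)\left(A_{i_2i_1}\cdots A_{i_1i_k}\right) = \left(A_{i_1i_2}\cdots A_{i_ki_1}\right)^2 .
\end{equation*}
Dividing by the nonzero cyclic product yields equality of the cyclic products of $A$ and $A^T$, and Lemma \ref{lem:symmcond} then gives symmetrisability.

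For the forward direction, suppose $A$ is symmetrisable. Lemma \ref{lem:symmcond} gives $A_{i_1i_2}\cdots A_{i_ki_1} = A_{i_2i_1}\cdots A_{i_1i_k}$ for the circuit. Multiplying this equality by the left-hand side gives
\begin{equation*}
\left(A_{i_1i_2}\cdots A_{i_ki_1}\right)^2 = \prod_{j} A_{i_ji_{j+1}}A_{i_{j+1}i_j} = \prod_j B_{i_ji_{j+1}}^2 = \left(B_{i_1i_2}\cdots B_{i_ki_1}\right)^2 .
\end{equation*}
So the two cyclic products agree up to sign.

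The only real point to check, and the one I expect to need care, is the sign. Every off-diagonal entry $A_{i_ji_{j+1}}$ on a circuit is strictly negative, since it is a nonzero off-diagonal entry of a generalised Cartan matrix. Likewise $B_{i_ji_{j+1}} = -b_{i_ji_{j+1}} < 0$, because $b_{i_ji_{j+1}} > 0$ whenever $A_{i_ji_{j+1}}A_{i_{j+1}i_j} \neq 0$. Hence both products have sign $(-1)^k$, and they are therefore equal, which completes the forward direction.
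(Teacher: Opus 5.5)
Your proof is correct and follows the paper's route. The reverse direction is exactly the squaring computation from the proof of the preceding theorem: substitute $B_{ij}^2 = A_{ij}A_{ji}$ and divide by the nonzero cyclic product. The paper leaves the forward direction implicit as the reversal of that computation, and your sign check (both products are $(-1)^k$ times a positive number, since every off-diagonal $A_{i_ji_{j+1}}$ and $B_{i_ji_{j+1}}$ on a circuit is strictly negative) is precisely the step needed to make that reversal rigorous, which the paper does not spell out.
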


\begin{theorem} \label{thm:reflectisomorph} Suppose $A_{ij} A_{ji} \in \left\{0,1,2,3,4\right\}$ for all $i, j$, and $A$ is symmetrisable. Then $H$ and $R$ are isomorphic.\end{theorem}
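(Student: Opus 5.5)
We construct the isomorphism explicitly, as foreshadowed after Lemma \ref{lem:isomorphcond}. Since $A$ is indecomposable, its Dynkin diagram $\Delta$ is connected. Fix vertex $1$ and set $\lambda_1 = 1$. For any vertex $i$, choose a path $1 = j_1, j_2, \ldots, j_r = i$ in $\Delta$ and define
\begin{equation*}\lambda_i = \dfrac{A_{j_1 j_2} A_{j_2 j_3} \cdots A_{j_{r-1} j_r}}{B_{j_1 j_2} B_{j_2 j_3} \cdots B_{j_{r-1} j_r}}.\end{equation*}
This is exactly the value forced by iterating $\lambda_{j_{k+1}} = \frac{A_{j_k j_{k+1}}}{B_{j_k j_{k+1}}} \lambda_{j_k}$ along the path. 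Note that every $B_{ij}$ with $A_{ij} \neq 0$ is nonzero, because $A_{ij}A_{ji} \ge 1$ gives $b_{ij} \in \{1,\sqrt2,\sqrt3,2\}$. Hence $\lambda_i$ is a nonzero real. Then define $\varphi: H \to R$ by $\varphi(\alpha_i) = \lambda_i e_i$, which is a linear isomorphism.

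The main obstacle is showing that $\lambda_i$ does not depend on the chosen path. Two paths from $1$ to $i$ combine into a closed walk. Independence is then equivalent to the statement that for every closed walk $i_1, \ldots, i_k, i_1$ in $\Delta$,
\begin{equation*}\prod \dfrac{A_{i_l i_{l+1}}}{B_{i_l i_{l+1}}} = 1.\end{equation*}
When an edge is traversed back and forth, the factor obtained is $\frac{A_{ij}A_{ji}}{B_{ij}B_{ji}} = \frac{A_{ij}A_{ji}}{b_{ij}^2} = 1$. This holds precisely by the hypothesis $A_{ij}A_{ji} \in \{1,2,3,4\}$ together with the table. So backtracking edges contribute nothing, and a closed walk reduces to a product of circuits with distinct vertices. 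For each circuit, the condition is exactly the identity in Corollary \ref{cor:symmreflectcond}, which holds because $A$ is symmetrisable. I would make the reduction careful by induction on walk length: either the walk has a repeated vertex, and we split it into two shorter closed walks, or it is a circuit, or it has length two and is handled by the backtrack computation.

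With well-definedness established, I would then show that $\lambda_i B_{ij} = A_{ij} \lambda_j$ for all $i \neq j$. If $A_{ij} = 0$, then $B_{ij} = 0$ as well, since $m_{ij} = 2$, so the identity holds trivially. If $A_{ij} \neq 0$, extending a path to $i$ by the edge $i \to j$ gives $\lambda_j = \frac{A_{ij}}{B_{ij}}\lambda_i$, which is the same identity.

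Finally, I would check equivariance on generators. For $j = i$, both sides give $-\lambda_i e_i$. For $j \neq i$, we have $s_i \cdot \varphi(\alpha_j) = \lambda_j e_j - \lambda_j B_{ij} e_i$ and $\varphi(s_i \cdot \alpha_j) = \lambda_j e_j - A_{ij}\lambda_i e_i$. These agree as long as $\lambda_j B_{ij} = A_{ij}\lambda_i$; note that the roles of $i$ and $j$ here are swapped relative to the previous paragraph. That swapped identity also holds, because applying the previous paragraph to the edge $j \to i$ gives $\lambda_j B_{ji} = A_{ji}\lambda_i$. Combined with $\lambda_i B_{ij} = A_{ij}\lambda_j$, $B_{ij}=B_{ji}$, and $B_{ij}^2 = A_{ij}A_{ji}$, this is consistent; indeed, multiplying out yields $\lambda_j B_{ij} = A_{ij}\lambda_i$ directly. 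I would write this out cleanly using the path to $j$ extended by the edge $j \to i$, choosing the convention of Lemma \ref{lem:isomorphcond}, $\lambda_i = \frac{B_{ij}}{A_{ij}}\lambda_j$, consistently from the start.
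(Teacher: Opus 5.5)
Your proposal is correct and follows essentially the paper's route: normalise $\lambda$ at one vertex, propagate it along paths in the Dynkin diagram via the ratio from Lemma~\ref{lem:isomorphcond}, get path-independence from $A_{ij}A_{ji}=b_{ij}^2$ on back-and-forth steps and from symmetrisability on genuine cycles, then read off equivariance by extending a path by one edge (the paper squares the two-path consistency identity rather than decomposing closed walks, which is only a cosmetic difference). One clean-up: extending a path to $i$ by the edge $i\to j$ already gives $\lambda_j B_{ij}=A_{ij}\lambda_i$, which is exactly the condition for $s_i\cdot\varphi(\alpha_j)=\varphi(s_i\cdot\alpha_j)$. So you should drop your restatement of it as $\lambda_i B_{ij}=A_{ij}\lambda_j$ (which holds only when $A_{ij}=A_{ji}$), and also the ensuing ``multiplying out'' reconciliation, which does not actually yield the target.
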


\begin{proof} By Lemma \ref{lem:isomorphcond}, it is sufficient to show that we can make a well-defined choice of $\lambda_1, \ldots, \lambda_n \in \mathbb{R}$ such that $\lambda_i = \frac{B_{ij}}{A_{ij}} \lambda_j$. 

Fix an arbitrary $i$ and let $\lambda_i = 1$. For each $j$, choose a path $i = k_1, k_2, \ldots, k_r = j$  from $i$ to $j$ and define values of $\lambda_{k_l}$ along this path by \begin{equation*}\lambda_{k_{l+1}} = \dfrac{B_{k_l k_{l+1}}}{A_{k_{l+1} k_l}} \lambda_{k_l}\end{equation*} This will eventually define $\lambda_j$.

Ambiguity in this definition arises where we have two distinct paths $i = k_1, \ldots, k_r = j$ and $i = l_1, \ldots, l_s = j$ from $i$ to $j$. Then we could define either \begin{equation*} \lambda_j = \dfrac{ B_{k_1k_2} \ldots B_{k_r k_1}}{A_{k_1k_2} \ldots A_{k_r k_1}} \lambda_i\end{equation*} or \begin{equation*} \lambda_j = \dfrac{B_{l_1l_2} \ldots B_{l_s l_1}}{A_{l_1l_2} \ldots A_{l_s l_1}} \lambda_i\end{equation*}

For these definitions to be consistent with each other, we need  \begin{equation*}\dfrac{B_{k_1k_2} \ldots B_{k_r k_1}}{A_{k_1k_2} \ldots A_{k_r k_1}}= \frac{B_{l_1l_2} \ldots B_{l_s l_1}}{A_{l_1l_2} \ldots A_{l_s l_1}}\end{equation*}

But if we apply Corollary \ref{cor:symmreflectcond} to the "circuit" $k_1, \ldots, k_r, k_{r-1}, \ldots, k_1$ we see that \begin{equation*}(B_{k_1k_2} \ldots B_{k_r k_1})^2 = A_{k_1k_2} \ldots A_{k_{r-1} k_r} A_{k_2k_1} \ldots A_{k_r k_{r-1}}\end{equation*} and similarly \begin{equation*}(B_{l_1l_2} \ldots B_{l_{s-1} l_s})^2 = A_{l_1l_2} \ldots A_{l_{s-1} l_s} A_{l_2l_1} \ldots A_{l_s l_{s-1}}\end{equation*}

So, squaring both sides and substituting these results, it remains to show that:

\begin{equation*} \dfrac{A_{k_2k_1}\ldots A_{k_rk_{r-1}}}{A_{k_1 k_2} \ldots A_{k_{r-1} k_r}}= \dfrac{A_{l_2l_1} \ldots A_{l_s l_{s-1}}}{A_{l_1 l_2} \ldots A_{l_{s-1} l_s}}\end{equation*}

and multiplying out denominators gives the condition for symmetrisability for the cycle $i = k_1, k_2 \ldots, k_r = j = l_s, l_{s-1}, \ldots, l_2, l_1 = i$. 

So $\lambda_1, \ldots, \lambda_n$ are well-defined. Now let $j, k$ be such that $A_{jk} \neq 0$. Choose a path $l_1, \ldots, l_r$ from $i$ to $j$. Then appending $k$ to this gives a path from $i$ to $k$. So we can use these paths to define $\lambda_j$ and $\lambda_k$, and then substituting in the resulting expression gives \begin{equation*}\lambda_k = \frac{B_{jk}}{A_{kj}} \lambda_j\end{equation*} which is the required condition.

Thus $\varphi: H \to R$ defined by $\varphi(\alpha_i) = \lambda_i e_i$ for each $i$ is the required isomorphism. \end{proof}

As a corollary of this, we obtain an alternative proof of one direction of Theorem \ref{thm:crystalcond}:

\begin{proof}[Alternative Proof]Suppose that all edge labels of $W$ are in $\left\{2,3,4,6, \infty\right\}$ and for every circuit in the Coxeter graph the numbers of edges labelled $4$ and $6$ in that circuit are even. 

Define a Dynkin diagram to have vertices $1, \ldots, n$ and edges as follows: a single edge from $i$ to $j$ wherever there is an edge labelled $3$ in the Coxeter graph, an undirected quadruple edge from $i$ to $j$ wherever there is an edge labelled $\infty$ in the Coxeter graph, and double and triple edges from $i$ to $j$ wherever there is an edge labelled $4$ or $6$ respectively in the Coxeter graph. 

By the given condition on edges labelled $4$ and $6$ in each circuit, we can choose directions of the double and triple edges such that in each circuit, there are the same number of double edges pointing clockwise as anticlockwise and the same number of triple edges pointing clockwise as anticlockwise. 

The corresponding Kac-Moody algebra has Weyl group $W$. The choices we made in assigning edge labels and directions mean that if $A$ is the generalised Cartan matrix then $A$ is symmetrisable and $A_{ij} A_{ji} \in \left\{0,1,2,3,4\right\}$ for all $i, j$. So by Theorem \ref{thm:reflectisomorph}, the Cartan and reflection representations are isomorphic. Thus $W$ is crystallographic. \end{proof}

Note also that the lattice constructed in the proof of this result by Humphreys is in fact the image of the isomorphism $\varphi: H \to R$ constructed in the proof of Theorem \ref{thm:reflectisomorph}.

\begin{corollary}Let $W$ be a Coxeter group with edge labels in $\left\{2,3,4,6,\infty\right\}$. Then $W$ is crystallographic if and only if the reflection representation of $W$ is isomorphic to some Cartan representation. \end{corollary}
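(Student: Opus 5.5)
The corollary has two directions, and I would treat them separately, using the results of this chapter and Chapter \ref{chap:2}. Throughout, $W$ is a Coxeter group whose edge labels lie in $\left\{2,3,4,6,\infty\right\}$. By the discussion after Theorem \ref{thm:weyliscoxeter}, such a $W$ arises as the Weyl group of at least one Kac-Moody algebra $L(A)$. So the phrase ``some Cartan representation'' of $W$ is meaningful, and it ranges over the Cartan representations $H$ attached to generalised Cartan matrices $A$ whose Weyl group is $W$.

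For the ``if'' direction, suppose $\varphi: H \to R$ is an isomorphism of representations of $W$ over $\mathbb{R}$. As noted in Chapter \ref{chap:2}, the action of $W$ on $H = H_\mathbb{R}$ preserves the lattice $\Lambda$ spanned by $\alpha_1, \ldots, \alpha_n$. Then $\varphi(\Lambda)$ is the $\mathbb{Z}$-span of the basis $\varphi(\alpha_1), \ldots, \varphi(\alpha_n)$ of $R$. It is $W$-stable because $g \cdot \varphi(x) = \varphi(g \cdot x)$ for all $g \in W$ and $x \in H$. Hence $W$ is crystallographic by definition. This direction is immediate.

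For the ``only if'' direction, suppose $W$ is crystallographic. By Theorem \ref{thm:crystalcond}, every circuit of the Coxeter graph contains an even number of edges labelled $4$ and an even number labelled $6$. I would then repeat the construction from the alternative proof above. Build a Dynkin diagram with a single edge for each label $3$ and an undirected quadruple edge ($A_{ij}=A_{ji}=-2$) for each label $\infty$. For each label $4$ or $6$, use a double or triple edge respectively, with directions chosen so that every circuit has equally many clockwise and anticlockwise double edges, and likewise for triple edges. The resulting matrix $A$ satisfies $A_{ij}A_{ji} \in \left\{0,1,2,3,4\right\}$. Each cyclic product then equals its reverse, so $A$ is symmetrisable by Lemma \ref{lem:symmcond}. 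Theorem \ref{thm:reflectisomorph} now gives $H \cong R$. If $W$ is decomposable, I would apply this argument to each connected component and take the direct sum.

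The main obstacle is justifying that consistent directions exist. The parity condition is stated circuit by circuit, but the directions must be chosen globally. I would handle this with a potential argument in the style of the proof of Theorem \ref{thm:crystalcond}. Fix a spanning tree $T$ and a root vertex, and assign to each vertex $v$ the integer $p_4(v)$, the number of label-$4$ edges on the tree path from the root to $v$, and similarly $p_6(v)$. Then direct each label-$4$ edge from the endpoint with even $p_4$ to the endpoint with odd $p_4$. Any label-$4$ edge, including non-tree edges, joins vertices of opposite parity, because the fundamental circuit through that edge has an even number of label-$4$ edges. Along any circuit the parity of $p_4$ alternates at each label-$4$ edge, so the orientations alternate in the cyclic order, and clockwise and anticlockwise double edges balance. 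The same argument applies to label-$6$ edges using $p_6$. An equivalent shortcut is to take the lattice weights $\lambda_i$ from Humphreys' construction and set $A_{ij} = -b_{ij}\lambda_j/\lambda_i$; checking integrality of these entries gives the same symmetrisable $A$ directly.
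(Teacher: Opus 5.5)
Your proof is correct and follows the paper's route. The ``if'' direction is the Chapter~\ref{chap:2} remark that $W$ preserves the lattice spanned by the $\alpha_i$ in $H_\mathbb{R}$. The ``only if'' direction is the paper's alternative proof of Theorem~\ref{thm:crystalcond}: balanced orientations of the double and triple edges, symmetrisability via Lemma~\ref{lem:symmcond}, then Theorem~\ref{thm:reflectisomorph}.

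Your spanning-tree parity argument supplies the existence of consistent orientations, which the paper only asserts. One step should be stated explicitly: the same fundamental-circuit argument shows that every edge \emph{not} labelled $4$ joins vertices of equal $p_4$-parity. That fact, and not only the behaviour at label-$4$ edges, is what forces the orientations to alternate around each circuit.
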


Also, the condition in Corollary \ref{cor:symmreflectcond} is exactly that $A$ and $B$ are equivalent (if we generalise the definition of equivalence to matrices which may have irrational entries). 

\chapter{Isomorphism Types of Cartan Representations}

\label{chap:5}
As demonstrated in Chapter \ref{chap:3}, a given Coxeter group may have many different Cartan representations. However, two distinct Cartan representations can be isomorphic.

This chapter aims to give a condition for two Cartan representations over a fixed field $k$ of characteristic $0$ to be isomorphic. We'll follow the same general strategy as in the previous chapter. In what follows $A$ and $B$ are two indecomposable generalised Cartan matrices giving rise to Cartan representations $H_A$, $H_B$ over $k$ with bases of simple roots $\alpha_1, \ldots, \alpha_n$ and $\beta_1, \ldots, \beta_n$ respectively.

In order to even ask whether these representations are isomorphic, we first need the two Weyl groups to be the same. That is, for $k \in \left\{0,1,2,3\right\}$, $A_{ij} A_{ji} = k$ if and only if $B_{ij} B_{ji} = k$. We don't demand the same condition for $A_{ij} A_{ji} \geq 4$, but we shall see that this is necessary for $H_A$ and $H_B$ to be isomorphic. Let $s_1, \ldots, s_n$ be the generators of the Weyl group.

\begin{proposition} Suppose $\varphi: H_A \to H_B$ is an isomorphism. Then for each $i$, $\varphi(\alpha_i) = \lambda_i \beta_i$ for some scalar $\lambda_i$.\end{proposition}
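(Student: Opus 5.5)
The plan is to mirror the proof of Proposition \ref{prop:isomorphscalar}: use equivariance of $\varphi$ under the single generator $s_i$ and identify the $-1$-eigenspace of $s_i$ acting on $H_B$.

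First, since $\varphi$ is a homomorphism of representations,
\begin{equation*}s_i \cdot \varphi(\alpha_i) = \varphi(s_i \cdot \alpha_i) = \varphi(\alpha_i - A_{ii}\alpha_i) = -\varphi(\alpha_i),\end{equation*}
using $A_{ii} = 2$. Hence $\varphi(\alpha_i)$ lies in the $-1$-eigenspace of $s_i$ acting on $H_B$.

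The only real work is to show that this eigenspace is exactly the line $k\beta_i$. Write $v = \sum_j c_j \beta_j$. Then
\begin{equation*}s_i \cdot v = \sum_j c_j(\beta_j - B_{ij}\beta_i) = v - \Big(\sum_j c_j B_{ij}\Big)\beta_i.\end{equation*}
So $s_i \cdot v = -v$ forces $2v = (\sum_j c_j B_{ij})\beta_i$. Since $\operatorname{char} k = 0$, we may divide by $2$, and so $v \in k\beta_i$. Conversely, $s_i\cdot\beta_i = -\beta_i$, so the eigenspace is exactly $k\beta_i$. The characteristic-$0$ hypothesis is used precisely here. In characteristic $2$ the argument breaks down, which is presumably why Chapter \ref{chap:7} treats that case separately.

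It follows that $\varphi(\alpha_i) = \lambda_i\beta_i$ for some $\lambda_i \in k$. Moreover $\lambda_i \neq 0$, because $\varphi$ is injective and $\alpha_i \neq 0$. No step here is a serious obstacle; the only care needed is the eigenspace computation and noting where the field's characteristic enters.
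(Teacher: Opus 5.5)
Your proposal is correct and follows the paper's proof, which simply refers back to Proposition~\ref{prop:isomorphscalar}: equivariance puts $\varphi(\alpha_i)$ in the $-1$-eigenspace of $s_i$ on $H_B$, and that eigenspace is the line spanned by $\beta_i$. Your explicit computation of this eigenspace, dividing by $2$ since $\operatorname{char} k = 0$, just fills in the one-dimensionality that the paper asserts without checking.
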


\begin{proof} The proof is the same as in Proposition \ref{prop:isomorphscalar}.\end{proof}

\begin{lemma} \label{lem:scalardef} Suppose $i \neq j$, $A_{ij} \neq 0$. Then \begin{equation*}\lambda_i = \frac{B_{ij}}{A_{ij}} \lambda_j\end{equation*}\end{lemma}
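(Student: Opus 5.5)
This is the same computation as in Lemma \ref{lem:isomorphcond}, with the reflection representation $R$ replaced by the Cartan representation $H_B$. By the preceding proposition, $\varphi(\alpha_k) = \lambda_k \beta_k$ for every $k$, and each $\lambda_k$ is nonzero because $\varphi$ is injective. The plan is to apply the equivariance condition $s_i \cdot \varphi(\alpha_j) = \varphi(s_i \cdot \alpha_j)$ for the fixed pair $i \neq j$, expand both sides in the basis $\beta_1, \ldots, \beta_n$, and compare the coefficients of $\beta_i$.

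First compute the left-hand side using the action on $H_B$, namely $s_i \cdot \beta_j = \beta_j - B_{ij}\beta_i$:
\begin{equation*}
s_i \cdot \varphi(\alpha_j) = \lambda_j\, s_i \cdot \beta_j = \lambda_j \beta_j - \lambda_j B_{ij} \beta_i.
\end{equation*}
Then compute the right-hand side using the action on $H_A$ and linearity of $\varphi$:
\begin{equation*}
\varphi(s_i \cdot \alpha_j) = \varphi(\alpha_j - A_{ij}\alpha_i) = \lambda_j \beta_j - A_{ij}\lambda_i \beta_i.
\end{equation*}
Since $\beta_i$ and $\beta_j$ are distinct basis vectors, the $\beta_i$-coefficients must agree, which gives $\lambda_j B_{ij} = A_{ij}\lambda_i$. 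Because $A_{ij} \neq 0$ by hypothesis, we can divide to obtain $\lambda_i = \frac{B_{ij}}{A_{ij}}\lambda_j$, as claimed.

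There is no real obstacle. The only points to check are that the $\beta_k$ form a basis of $H_B$, so comparing coefficients is legitimate, and that dividing by $A_{ij}$ is valid over $k$, which holds since $A_{ij}$ is a nonzero integer and $k$ has characteristic $0$.

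It is worth recording a consequence for later use. Because $\lambda_i, \lambda_j \neq 0$, the identity forces $B_{ij} \neq 0$ whenever $A_{ij} \neq 0$, so the two Dynkin diagrams have the same edges. Combining this identity with the symmetric one for $(j,i)$ then yields $A_{ij}A_{ji} = B_{ij}B_{ji}$, including the cases where this product is at least $4$, as the chapter anticipates.
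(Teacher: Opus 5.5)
Your proof is correct and matches the paper's argument: apply $s_i \cdot \varphi(\alpha_j) = \varphi(s_i \cdot \alpha_j)$, expand both sides in the basis $\beta_1, \ldots, \beta_n$, compare $\beta_i$-coefficients to get $\lambda_i A_{ij} = \lambda_j B_{ij}$, and divide by $A_{ij}$. Your closing remark is also correct and is the use the paper makes of this lemma in Theorem \ref{thm:isomorphcond}: since the $\lambda_k$ are nonzero, the two coefficient identities together give $A_{ij}A_{ji} = B_{ij}B_{ji}$.
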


\begin{proof} We require \begin{equation*}s_i \cdot \varphi(\alpha_j) = \varphi(s_i \cdot \alpha_j)\end{equation*} The left-hand side of this is given by \begin{equation*}s_i \cdot (\lambda_j \beta_j) = \lambda_j \beta_j - B_{ij} \lambda_j \beta_i\end{equation*}

And the right-hand side is \begin{equation*}\varphi(\alpha_j - A_{ij} \alpha_i) = \lambda_j \beta_j - A_{ij} \lambda_i \beta_i\end{equation*} So equating coefficients of $\beta_i$, $\lambda_i A_{ij} = \lambda_j B_{ij}$ from which the result follows. \end{proof}

This is necessary and sufficient for $\varphi$ to be an isomorphism of representations. So it suffices to find a condition for the $\lambda_i$ to be well-defined. 

\begin{theorem} \label{thm:isomorphcond} $H_A$ and $H_B$ are isomorphic if and only if $A$ and $B$ are equivalent, i.e. \begin{equation*}A_{i_1i_2} \ldots A_{i_k i_1} = B_{i_1 i_2} \ldots B_{i_k i_1}  \text{ for all } i_1, \ldots, i_k \in \left\{1, \ldots, n\right\}\end{equation*} \end{theorem}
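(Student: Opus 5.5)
We reduce to Lemma \ref{lem:scalardef}. An isomorphism $\varphi: H_A \to H_B$ is determined by nonzero scalars $\lambda_1, \ldots, \lambda_n$ with $\varphi(\alpha_i) = \lambda_i \beta_i$. Conversely, any choice of nonzero $\lambda_i$ satisfying $\lambda_i A_{ij} = \lambda_j B_{ij}$ for all $i \neq j$ gives an isomorphism. Indeed, this relation for all pairs is exactly $s_i \cdot \varphi(\alpha_j) = \varphi(s_i \cdot \alpha_j)$ for all $i,j$; the case $i=j$ is automatic since both sides equal $-\lambda_i\beta_i$. So the theorem becomes a statement about solvability of the linear system $\lambda_i A_{ij} = \lambda_j B_{ij}$ with all $\lambda_i \neq 0$. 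We then compare this system with the definition of equivalence, $A = DBD^{-1}$, and use Proposition \ref{prop:equivcond} to translate into cyclic products.

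\emph{Forward direction.} First we show $A_{ij} = 0 \iff B_{ij} = 0$. The cases $A_{ij}A_{ji} \in \{0,1,2,3\}$ are assumed to match. Moreover, if $A_{ij} = 0$ the relation forces $\lambda_j B_{ij} = 0$, so $B_{ij} = 0$, and symmetrically. Next, for $A_{ij} \neq 0$, multiply the relation for $(i,j)$ by the relation for $(j,i)$. This gives $\lambda_i\lambda_j A_{ij}A_{ji} = \lambda_j\lambda_i B_{ij}B_{ji}$, hence $A_{ij}A_{ji} = B_{ij}B_{ji}$; this is the promised necessity of matching products even when they are $\geq 4$. More generally, around any circuit $i_1, \ldots, i_k, i_1$ in the Dynkin diagram, multiplying the relations $\lambda_{i_j} A_{i_j i_{j+1}} = \lambda_{i_{j+1}} B_{i_j i_{j+1}}$ makes the $\lambda$'s cancel, because each occurs once on each side. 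This yields $A_{i_1 i_2}\cdots A_{i_k i_1} = B_{i_1 i_2}\cdots B_{i_k i_1}$. Non-circuits give $0=0$ by the zero pattern just established.

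\emph{Converse.} Assume the cyclic products agree. By Proposition \ref{prop:equivcond} there is $D = \mathrm{diag}(d_1,\ldots,d_n)$ with $d_i>0$ and $A = DBD^{-1}$, i.e. $A_{ij} = d_i d_j^{-1} B_{ij}$. Set $\lambda_i = d_i^{-1}$. Then $\lambda_i A_{ij} = d_j^{-1} B_{ij} = \lambda_j B_{ij}$, so by the first paragraph $\varphi(\alpha_i) = d_i^{-1}\beta_i$ is an isomorphism.

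The main subtlety is the field: $H_A$ is over an arbitrary $k$ of characteristic $0$. The $d_i$ from Proposition \ref{prop:equivcond} are products of ratios of integers $A_{ij}/B_{ij}$ along paths, so they are positive rationals and lie in $k$. I would point this out explicitly, since the proof of Proposition \ref{prop:equivcond} notes one may start with $d_1$ rational. A second point to check is that the hypothesis that $H_A$ and $H_B$ are representations of the same Weyl group is compatible with the product $A_{ij}A_{ji} \geq 4$ possibly differing a priori. This is handled by deriving equality of these 2-cycle products from the isomorphism, as above, rather than assuming it.
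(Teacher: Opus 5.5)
Your proof is correct and follows the paper's argument. The forward direction is the same telescoping of the relations $\lambda_i A_{ij} = \lambda_j B_{ij}$ around a circuit. For the converse you cite Proposition \ref{prop:equivcond} and take $\lambda_i = d_i^{-1}$, whereas the paper re-runs the same path-based well-definedness argument inline for the $\lambda_i$. Your explicit check that $A_{ij}=0 \iff B_{ij}=0$, and your remark that the $d_i$ can be taken rational (so they lie in $k$), settle two points the paper leaves implicit.
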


Note that, as with the condition for symmetrisability we earlier saw, it is sufficient to consider the case when $i_1, \ldots, i_k$ forms a circuit in the Dynkin diagram (for either $A$ or $B$, the two being equivalent). Also, in particular if $k = 2$ then this means $A_{ij} A_{ji} = B_{ij} B_{ji}$ for all $i, j$.

\begin{proof} Suppose $H_A$ and $H_B$ are isomorphic. Then we have a well-defined set of scalars $\lambda_1, \ldots, \lambda_n$ such that $\lambda_i = \frac{B_{ij}}{A_{ij}} \lambda_j$ for all $i, j$ such that $A_{ij} \neq 0$. Let $i_1, \ldots, i_k, i_1$ form a circuit in the Dynkin diagram.  

Then \begin{equation*}\lambda_{i_1} = \dfrac{B_{i_1i_2}}{A_{i_1i_2}} \lambda_{i_2} = \ldots = \dfrac{B_{i_1i_2} \ldots B_{i_k i_1}}{A_{i_1i_2} \ldots A_{i_k i_1}} \lambda_i\end{equation*} This means the coefficient of $\lambda_i$ on the right-hand side must be $1$ and so \begin{equation*}A_{i_1 i_2} \ldots A_{i_k i_1} = B_{i_1 i_2} \ldots B_{i_k i_1}\end{equation*}

Suppose conversely the given condition holds. Fix some $i$ and define $\lambda_i = 1$. For each $j$, choose a path $i = k_1, k_2, \ldots, k_r = j$ from $i$ to $j$ in the Dynkin diagram. Define values of $\lambda_{k_l}$ along this path by $\lambda_{k_{l+1}} = \frac{B_{k_{l+1} k_l}}{A_{k_{l+1} k_l}}$. This will eventually define $\lambda_j$.

Ambiguity in this definition arises where we have two distinct paths $i = k_1, \ldots, k_r = j$ and $i = l_1, \ldots, l_s = j$ from $i$ to $j$. Then we could define either 

\begin{equation*}\lambda_j = \dfrac{B_{k_2 k_1} \ldots B_{k_r k_{r-1}}}{A_{k_2k_1} \ldots A_{k_r k_{r-1}}} \lambda_i\end{equation*}

or
\begin{equation*}\lambda_j = \dfrac{B_{l_2 l_1} \ldots B_{l_r l_{r-1}}}{A_{l_2l_1} \ldots A_{l_r l_{r-1}}} \lambda_i\end{equation*}

For these definitions to be consistent with each other, we need \begin{equation*}\dfrac{B_{k_2 k_1} \ldots B_{k_r k_{r-1}}}{A_{k_2k_1} \ldots A_{k_r k_{r-1}}} = \dfrac{B_{l_2 l_1} \ldots B_{l_r l_{r-1}}}{A_{l_2l_1} \ldots A_{l_r l_{r-1}}} \end{equation*}

But we can apply the given condition to the "circuit" $k_1, \ldots, k_r, k_{r-1}, \ldots, k_1$ to obtain \begin{equation*}B_{k_2 k_1} \ldots B_{k_r k_{r-1}} B_{k_1 k_2} \ldots B_{k_{r-1} k_r} = A_{k_2 k_1} \ldots A_{k_r k_{r-1}} A_{k_1 k_2} \ldots A_{k_r k_{r-1}}\end{equation*} 

Multiply top and bottom of the left-hand side by $B_{k_1k_2} \ldots B_{k_{r-1}k_r}$ and substitute the above results. We then want to show 

\begin{equation*} \frac{A_{k_1k_2} \ldots A_{k_{r-1} k_r}}{B_{k_1k_2} \ldots B_{k_{r-1} k_r}} = \frac{B_{l_2l_1} \ldots B_{l_k l_{k-1}}}{A_{l_2l_1} \ldots A_{l_s l_{s-1}}}\end{equation*} 

But multiplying out denominators gives exactly the given condition for the circuit $i = k_1, \ldots, k_r = j = l_s, l_{s-1}, \ldots, l_1 = i$. So $\lambda_j$ is well-defined for each $j$.

Now let $j, k$ be such that $A_{jk} \neq 0$. Choose a path $l_1, \ldots, l_r$ from $i$ to $j$. Then appending $k$ to this gives a path from $i$ to $k$. So we can use these paths to define $\lambda_j$ and $\lambda_k$ and then substituting in the resulting expression gives $\lambda_k = \frac{B_{kj}}{A_{kj}} \lambda_j$ as required. \end{proof}

Note that one direction of this proof is almost identical to the proof of Theorem \ref{thm:reflectisomorph}. The only difference is that the $B$ given by $B_{ij} =-  2 (e_i, e_j)$ in that theorem does not necessarily form a generalised Cartan matrix, because $B_{ij}$ can take the values $\sqrt{2}, \sqrt{3}$. Also we observe the following: 

\begin{corollary} \label{cor:dualisomorphcond} The Cartan representation from the Kac-Moody algebra associated to $A$ is isomorphic to the Cartan representation from the dual Kac-Moody algebra if and only if $A$ is symmetrisable. \end{corollary}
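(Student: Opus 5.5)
This corollary comes directly from Theorem \ref{thm:isomorphcond} with $B = A^T$, combined with Lemma \ref{lem:symmcond}. The Kac-Moody algebra dual to $\mathfrak{g}(A)$ is by definition $\mathfrak{g}(A^T)$. It has the same Weyl group, because $(A^T)_{ij}(A^T)_{ji} = A_{ij}A_{ji}$. Hence $A^T$ satisfies the standing hypothesis of this chapter: for $k \in \left\{0,1,2,3\right\}$ we have $A_{ij}A_{ji} = k$ if and only if $(A^T)_{ij}(A^T)_{ji} = k$. Also, $A^T$ is indecomposable whenever $A$ is, since the zero pattern is symmetric. So the Cartan representations $H_A$ and $H_{A^T}$ are two representations of the same Coxeter group $W$, and Theorem \ref{thm:isomorphcond} applies to them.

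By Theorem \ref{thm:isomorphcond}, $H_A \cong H_{A^T}$ if and only if
\begin{equation*}A_{i_1 i_2} \ldots A_{i_k i_1} = (A^T)_{i_1 i_2} \ldots (A^T)_{i_k i_1} = A_{i_2 i_1} \ldots A_{i_1 i_k}\end{equation*}
for all $i_1, \ldots, i_k$. This is exactly the cyclic product criterion of Lemma \ref{lem:symmcond}, so it holds if and only if $A$ is symmetrisable. This gives both directions at once.

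There is essentially no obstacle. The only point to check is that Theorem \ref{thm:isomorphcond} quantifies over all tuples $i_1, \ldots, i_k$ (or equivalently over all circuits), while Lemma \ref{lem:symmcond} quantifies over all tuples. Both reduce to circuits in the same Dynkin diagram, by the remarks following each result, so they match. For the decomposable case, one can first reduce to indecomposable components using Proposition \ref{prop:indecompcond}, although the cyclic product statements do not actually need this reduction.
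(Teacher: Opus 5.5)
Your proposal is correct and is exactly the argument the paper intends. The paper states this corollary without a separate proof, immediately after Theorem \ref{thm:isomorphcond}: setting $B = A^T$ turns that theorem's cyclic-product condition into the symmetrisability criterion of Lemma \ref{lem:symmcond}, and your checks that $A^T$ has the same Weyl group and is indecomposable are the right hypotheses to verify.
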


We can also use this result to create diagrams which represent isomorphism classes of Cartan representations. A Coxeter graph uniquely specifies the Weyl group, which may have many distinct representations. A Dynkin diagram corresponds to a specific Kac-Moody algebra and thus to a specific Cartan representation, but as we've seen distinct Dynkin diagrams can give rise to isomorphic Cartan representations.

\begin{definition} Let $A$ be an $n \times n$ generalised Cartan matrix. Then the associated Cartan representation diagram has vertex set $\left\{1, \ldots n\right\}$. There is an edge from $i$ to $j$ if and only if $A_{ij} A_{ji} \neq 0$. If this is the case, the edge is labelled with the value of $A_{ij} A_{ji}$ (if the label is omitted this is taken to mean $A_{ij} A_{ji} = 1$.) Edges are not directed.

Additionally, each cycle $i_1, \ldots,i_k, i_1$ with $i_1,\ldots, i_k, i_1$ in clockwise order is labelled with the product $A_{i_1i_2} \ldots A_{i_ki_1}$.
\end{definition}

By Theorem \ref{thm:isomorphcond} this diagram uniquely determines the isomorphism class of the Cartan representation given by $A$, since each cyclic product $A_{i_1i_2} \ldots A_{i_ki_1}$ can be broken down as a product of cyclic products corresponding to cycles and of "short cyclic products" $A_{ij} A_{ji}$. 

Note also that we can recover the Coxeter graph of the Weyl group from this diagram by deleting all cycle labels, and replacing edge labels $2, 3$ with $4, 6$ respectively and edge labels $\geq 4$ with $\infty$. 

For example, if we take the Coxeter graph $\Gamma$ from Chapter \ref{chap:3}:

 \begin{figure}[h]
\centering
\begin{tikzcd}
& \bullet \arrow[ddr, dash, "4"] \arrow[ddl, dash, "4"] & \\
& & \\
\bullet \arrow[rr, dash, "4"] & & \bullet\\
\end{tikzcd}
\end{figure}

The possible corresponding Cartan representation diagrams are as follows (where the bold label in the centre of the triangle is the cyclic product $A_{12} A_{23} A_{31}$):

 \begin{figure}[h]
\centering
\begin{tikzcd}
& 1 \arrow[ddr, dash, "2"] \arrow[ddl, dash, "2"] & \\
& \textbf{-1} & \\
2 \arrow[rr, dash, "2"] & & 3\\
\end{tikzcd}
\end{figure}

 \begin{figure}[h]
\centering
\begin{tikzcd}
& 1 \arrow[ddr, dash, "2"] \arrow[ddl, dash, "2"] & \\
& \textbf{-2} & \\
2 \arrow[rr, dash, "2"] & & 3\\
\end{tikzcd}
\end{figure}

 \begin{figure}[h]
\centering
\begin{tikzcd}
& 1 \arrow[ddr, dash, "2"] \arrow[ddl, dash, "2"] & \\
& \textbf{-4} & \\
2 \arrow[rr, dash, "2"] & & 3\\
\end{tikzcd}
\end{figure}

 \begin{figure}[h!]
\centering
\begin{tikzcd}
& 1 \arrow[ddr, dash, "2"] \arrow[ddl, dash, "2"] & \\
& \textbf{-8} & \\
2 \arrow[rr, dash, "2"] & & 3\\
\end{tikzcd}
\end{figure}
\pagebreak
Also, recall that in Chapter \ref{chap:3} we had a notion of transforming between Dynkin diagrams to give isomorphic representations. We can now formalise this in terms of equivalence of generalised Cartan matrices. Note firstly that if $D$ is an $n \times n$ diagonal matrix with positive entries, we can write it as a product $D_1 \ldots. D_n$ where the $D_i$ are diagonal and $(D_i)_{jj} = 1$ if $i \neq j$. This means that we can transform between any two equivalent GCMs (and thus isomorphic representations) using only these $D_i$. 

Fix $i$, and let $\lambda$ be the $(i, i)$th entry of $D_i$. Let $A$ be a generalised Cartan matrix and $B = D_i A D_i^{-1}$. Then $B_{jk} = A_{jk}$ if $j, k \neq i$, $B_{ji} = \lambda^{-1} A_{ji}$ for $j \neq i$ and $B_{ik} = \lambda A_{ik}$ for $k \neq i$. (Also $B_{ii} = \lambda^{-1} \lambda A_{ii} = A_{ii} = 2$.) 

That is, for all edges $(i, k)$ leaving the vertex $i$ we multiply $A_{ik}$ by $\lambda$ and divide $A_{ki}$ by $\lambda$. But for $B$ to be a generalised Cartan matrix, we need the entries of $B$ to remain integers. So (assuming $\lambda \neq 1$ so $D_i$ is not the identity matrix) either $\lambda > 1$ and $\lambda$ divides every $A_{ji}, j \neq i$, or $\lambda < 1$ and $\frac{1}{\lambda}$ divides every $A_{ik}, k \neq i$. 

For instance, in the example given above and in Chapter \ref{chap:3} we could transform from the first of the below diagrams to the second by "flipping" the directions of the two edges. This corresponds to taking $\lambda = 2$. 

\begin{figure}[h]
\centering
\begin{tikzcd}
    & \bullet \arrow[ddr, Rightarrow] \arrow[ddl, Rightarrow] & \\
    & & \\
    \bullet  & & \bullet \arrow[ll, Rightarrow]  \\
\end{tikzcd}
\end{figure}

\begin{figure}[h]
\centering
\begin{tikzcd}
    & \bullet \arrow[ddr, Rightarrow] & \\
    & & \\
    \bullet \arrow[uur,Rightarrow] \arrow[rr, Rightarrow] & & \bullet \\
\end{tikzcd}
\end{figure}

\chapter{Invariant Bilinear Forms and the Dual Representation}

\label{chap:6}
We now consider the dual of a Cartan subalgebra representation. We'll begin with proving some facts about dual representations in general.

\begin{definition}Let $V$ be a representation of a group $G$. Then the dual representation $V^*$ is given by the vector dual of $V$. If $\theta \in V^*$ then $g \in G$ acts on $\theta$ by $g \cdot \theta (v) = \theta(g^{-1} \cdot v)$. If we fix a basis $e_1, \ldots, e_n$ of $V$ and $g$ acts as the matrix $A$ with respect to this basis, then $g$ acts as $(A^{-1})^T$ with respect to the dual basis $e_1^*, \ldots, e_n^*$. \end{definition}

\begin{proposition}\label{prop:isomorphiffform} Let $V$ be a representation of a group $G$ and $V^*$ its dual. Then $V$ and $V^*$ are isomorphic if and only if there exists a non-degenerate $G$-invariant bilinear form $(\cdot, \cdot)$ on $V$. \end{proposition}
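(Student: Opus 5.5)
The plan is to set up a bijection between isomorphisms $V \to V^*$ of representations and non-degenerate $G$-invariant bilinear forms on $V$, via the standard correspondence between linear maps $V \to V^*$ and bilinear forms on $V$. Throughout, $V$ is finite-dimensional, since the Cartan representations we care about are.

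First, given a linear map $\psi: V \to V^*$, define $(v, w) = \psi(w)(v)$. Every bilinear form arises from a unique such $\psi$, namely $w \mapsto (\cdot, w)$. Non-degeneracy of $(\cdot,\cdot)$ means exactly that $\psi(w) = 0$ forces $w = 0$, so $\psi$ is injective. Since $\dim V^* = \dim V$, injectivity is equivalent to $\psi$ being a linear isomorphism.

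Second, I will check that $G$-equivariance of $\psi$ is equivalent to $G$-invariance of the form. Equivariance says $\psi(g\cdot w) = g \cdot \psi(w)$ for all $g, w$. Evaluating at $v$ and using the definition of the dual action, this becomes
\begin{equation*}
(v, g \cdot w) = \psi(w)(g^{-1}\cdot v) = (g^{-1}\cdot v, w) \quad \text{for all } v, w \in V,\ g \in G.
\end{equation*}
Replacing $v$ by $g \cdot v$ turns this into $(g\cdot v, g\cdot w) = (v, w)$, which is invariance. Conversely, invariance applied to the pair $g^{-1}\cdot v,\, w$ gives back the displayed identity, and hence equivariance.

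Combining the two steps: an isomorphism of representations $V \cong V^*$ yields a non-degenerate invariant form, and such a form yields an isomorphism. There is no real obstacle here. The only care needed is in the second step, keeping track of $g$ versus $g^{-1}$ in the dual action $g\cdot\theta(v) = \theta(g^{-1}\cdot v)$, and in the first step, recording that finite-dimensionality is what lets injectivity give an isomorphism.
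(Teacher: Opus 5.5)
Your proof is correct and uses the same standard correspondence the paper indicates: the paper omits the proof but records that a form gives the map $v \mapsto (v,\cdot)$ and an isomorphism $\varphi$ gives $(v,w) = \varphi(v)(w)$, whereas you use the transposed convention $w \mapsto (\cdot, w)$, which makes no difference. Your explicit finite-dimensionality assumption, which the paper leaves implicit, is genuinely needed: for infinite-dimensional $V$ one never has $V \cong V^*$, even when non-degenerate invariant forms exist.
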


The proof is omitted, but we note here that given a form $(\cdot, \cdot)$ the corresponding isomorphism maps $v \to (v, \cdot)$ and given an isomorphism $\varphi$ the corresponding form is $(v, w) = \varphi(v)(w)$. Observe also that there is no requirement for the form to be symmetric. 

Now let's consider the particular case of the Cartan representation $H$ of a Weyl group $W$. By the above proposition, $H$ and $H^*$ are isomorphic if and only if there exists a non-degenerate $W$-invariant bilinear form on $H$. So we aim to establish conditions for the existence of such a form.

We use the same notation as in previous chapters, and let $(\cdot, \cdot)$ be a bilinear form on $H$. 

$(\cdot, \cdot)$ is $W$-invariant if and only if \begin{equation*}(s_i \cdot \alpha_j, s_i \cdot \alpha_k) = (\alpha_j, \alpha_k) \text{ for all } i, j, k\end{equation*} if and only if \begin{equation*}(\alpha_j - A_{ij} \alpha_i, \alpha_k - A_{ik} \alpha_k) = (\alpha_j, \alpha_k) \text{ for all } i, j, k\end{equation*} if and only if \begin{equation*}-(\alpha_i, \alpha_k) A_{ij} - (\alpha_j, \alpha_i) A_{ik} + (\alpha_i, \alpha_i) A_{ij} A_{ik} = 0 \text{ for all } i,j,k\end{equation*}

Let $B$ be the matrix of the form in this basis, so $B_{ij} = (\alpha_i, \alpha_j)$. Then we rewrite the above as \begin{equation}\label{eq:formcond}B_{ik} A_{ij} + B_{ji} A_{ik} = A_{ij} A_{ik} B_{ii} \text{ for all } i,j,k.\end{equation}

Suppose now $A$ is a Cartan matrix, corresponding to the case where the Kac-Moody algebra $\mathfrak{g}(A)$ is in fact a finite-dimensional semisimple Lie algebra. Here we have a non-degenerate symmetric $W$-invariant bilinear form induced by the Killing form and \begin{equation} \label{eq:finiteform} A_{ij} = 2 \frac{B_{ij}}{B_{ii}}\end{equation} (\cite{Carter}, p.71)

We observe that this definition satisfies the relation \eqref{eq:formcond} between $A$ and $B$, but that this relies on symmetry of $B$ where we have not assumed symmetry of the form in the general case. Also, the given definition of $A$ in terms of $B$ cannot be easily reversed to give a definition of $B$ in terms of $A$, especially when $B_{ii}$ is not necessarily non-zero.

So instead, we'll look at simpler cases of the expression \eqref{eq:formcond}. First suppose $i = j$ (so that $A_{ij} = 2$) to obtain

\begin{equation*}2 B_{ik} + B_{ii} A_{ik} = 2 A_{ik} B_{ii}\end{equation*}

and thus, cancelling, \begin{equation} \label{eq:simpleformcond1} A_{ik} B_{ii} = 2 B_{ik}\end{equation} 

which is in fact a rearrangement of the condition \eqref{eq:finiteform} in the finite case. Similarly by taking $i = k$, \begin{equation} \label{eq:simpleformcond2} A_{ij} B_{ii} = 2 B_{ji}\end{equation}

These conditions together imply that $B_{ij} = B_{ji}$ for all $i, j$. That is:

\begin{proposition}\label{prop:symmform} Let $(\cdot, \cdot)$ be a $W$-invariant bilinear form on $H$. Then it is symmetric.\end{proposition}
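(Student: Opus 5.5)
The plan is to combine the two specialisations \eqref{eq:simpleformcond1} and \eqref{eq:simpleformcond2} of the invariance condition \eqref{eq:formcond}. Fix $i \neq j$; symmetry on the diagonal is trivial, so this is the only case to treat. Before doing so, I would re-derive both identities directly from invariance under $s_i$ alone, since the expansion leading to \eqref{eq:formcond} contains a typo ($A_{ik}\alpha_k$ should read $A_{ik}\alpha_i$). Invariance gives $(s_i\alpha_i, s_i\alpha_k) = (\alpha_i,\alpha_k)$, and since $s_i\alpha_i = -\alpha_i$ this becomes
\begin{equation*}
-(\alpha_i, \alpha_k - A_{ik}\alpha_i) = (\alpha_i,\alpha_k),
\end{equation*}
that is, $2B_{ik} = A_{ik}B_{ii}$. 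The same computation with the roles of the two arguments swapped gives $2B_{ki} = A_{ik}B_{ii}$.

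Subtracting the two identities yields $2(B_{ik} - B_{ki}) = 0$ for every pair $i, k$. Since we work over a field of characteristic $0$ (or, more generally, characteristic $\neq 2$), we can divide by $2$ to get $B_{ik} = B_{ki}$. Relabelling, $B_{ij} = B_{ji}$ for all $i, j$. Thus the matrix of the form in the basis $\alpha_1,\ldots,\alpha_n$ is symmetric, and by bilinearity the form itself is symmetric.

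The only subtle point is the division by $2$. In characteristic $2$ the argument genuinely breaks, and I would note that this case is deferred to Chapter \ref{chap:7}. Apart from this, and from double-checking which index of $B$ appears in each of the two specialisations so that they really give $B_{ik}$ and $B_{ki}$ with the same right-hand side, the argument is immediate. In particular, neither indecomposability nor non-degeneracy of the form is needed.
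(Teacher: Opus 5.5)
Your proof is correct and is the paper's argument: you specialise invariance under $s_i$ to the pairs $(\alpha_i,\alpha_k)$ and $(\alpha_k,\alpha_i)$, which is exactly setting $j=i$ and $k=i$ in \eqref{eq:formcond}, giving $2B_{ik}=A_{ik}B_{ii}=2B_{ki}$, and then divide by $2$. Your direct derivation from invariance, which avoids the typo $A_{ik}\alpha_k$ in the paper's expansion, and your remark that the division by $2$ needs characteristic $\neq 2$ are both accurate.
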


Also, we can check by direct calculation that \eqref{eq:simpleformcond1} and \eqref{eq:simpleformcond2}together imply \eqref{eq:formcond}. So we can reduce to a single condition, reminiscent of the finite case:

\begin{theorem} \label{thm:invariantformcond} Let $B$ be the matrix of a symmetric bilinear form on $H$ in the basis $\left\{\alpha_1, \ldots, \alpha_n\right\}$. Then the form is $W$-invariant if and only if $A_{ij} B_{ii} = 2 B_{ij}$ for all $i, j$.\end{theorem}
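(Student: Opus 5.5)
Since $W$ is generated by $s_1, \ldots, s_n$ and the form is bilinear, $W$-invariance is equivalent to $(s_i \cdot \alpha_j, s_i \cdot \alpha_k) = (\alpha_j, \alpha_k)$ for all $i, j, k$. Expanding with $s_i \cdot \alpha_j = \alpha_j - A_{ij}\alpha_i$, as done just before \eqref{eq:formcond}, this in turn is equivalent to \eqref{eq:formcond}. So I will prove that, for symmetric $B$, condition \eqref{eq:formcond} for all $i,j,k$ holds if and only if $A_{ij} B_{ii} = 2B_{ij}$ for all $i,j$.

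For the forward direction, I specialise \eqref{eq:formcond} to $i = j$. Using $A_{ii} = 2$ gives $2B_{ik} + B_{ii}A_{ik} = 2A_{ik}B_{ii}$, which is \eqref{eq:simpleformcond1}: $A_{ik}B_{ii} = 2B_{ik}$. After renaming indices this is exactly the stated condition. Symmetry is not needed here. The case $i = k$ gives \eqref{eq:simpleformcond2}, which under symmetry is the same condition.

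For the converse, assume $A_{ij}B_{ii} = 2B_{ij}$ for all $i,j$ and that $B$ is symmetric, so also $B_{ji} = B_{ij} = \tfrac12 A_{ij}B_{ii}$. Substituting into the left side of \eqref{eq:formcond} gives
\begin{equation*}
B_{ik}A_{ij} + B_{ji}A_{ik} = \tfrac12 A_{ik}B_{ii}A_{ij} + \tfrac12 A_{ij}B_{ii}A_{ik} = A_{ij}A_{ik}B_{ii},
\end{equation*}
which is the right side. Hence \eqref{eq:formcond} holds for every $i,j,k$, including the degenerate cases $i=j$ or $i=k$, since the substitution uses no assumption on distinctness. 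Therefore the form is invariant under each generator $s_i$, and so under all of $W$.

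No step is a real obstacle. The only point needing care is the equivalence between invariance under the generators and \eqref{eq:formcond}. I will re-derive the expansion carefully, correcting the typo $A_{ik}\alpha_k$ to $A_{ik}\alpha_i$: the cross terms give $-A_{ik}(\alpha_j,\alpha_i) - A_{ij}(\alpha_i,\alpha_k) + A_{ij}A_{ik}(\alpha_i,\alpha_i)$. Equating this to zero yields \eqref{eq:formcond}, and every step is reversible.
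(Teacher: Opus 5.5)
Your proposal is correct and follows the paper's route. Necessity comes from specialising \eqref{eq:formcond} to $i=j$, and sufficiency from substituting $B_{ik}=\tfrac12 A_{ik}B_{ii}$ and $B_{ji}=B_{ij}=\tfrac12 A_{ij}B_{ii}$ back into \eqref{eq:formcond}, which is exactly the ``direct calculation'' the paper leaves to the reader; you are also right that the expansion preceding \eqref{eq:formcond} should read $\alpha_k - A_{ik}\alpha_i$ rather than $\alpha_k - A_{ik}\alpha_k$.
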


This means that we can uniquely (up to rescaling) define $B$ as follows: fix $i$ and set $B_{ii} = 1$. Given $j$, choose a path $i = k_1, \ldots, k_r = j$ from $i$ to $j$ in the Dynkin diagram. Set \begin{equation*}B_{k_1k_2} = B_{k_2k_1} = \dfrac{1}{2} A_{k_1k_2} B_{k_1k_1}\end{equation*} Then set \begin{equation*}B_{k_2k_2} = \dfrac{2}{A_{k_2k_1}} B_{k_1k_2}\end{equation*} Repeat this process along the path until we eventually define $B_{jj}$ and $B_{jl}$ for any $l$ neighbouring $j$. 

\begin{lemma} Suppose a form with matrix $B$ as above is well-defined. Then $A$ is symmetrisable. \end{lemma}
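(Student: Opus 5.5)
We want to show: if the path-construction of $B$ is well-defined (independent of path), so that $B$ is a symmetric matrix satisfying $A_{ij} B_{ii} = 2 B_{ij}$ for all $i,j$ as in Theorem \ref{thm:invariantformcond}, then $A$ is symmetrisable. The natural route is through the criterion of Lemma \ref{lem:symmcond}. It suffices to verify the cyclic condition on circuits $i_1, \ldots, i_k, i_1$ of the Dynkin diagram, i.e.\ when all $A_{i_j i_{j+1}} \neq 0$.

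The first step is to extract the relation between diagonal entries along an edge. For $A_{ij} \neq 0$, apply the condition of Theorem \ref{thm:invariantformcond} in both orders and use symmetry $B_{ij} = B_{ji}$. This gives $A_{ij} B_{ii} = 2B_{ij} = 2B_{ji} = A_{ji} B_{jj}$, and hence
\begin{equation*}B_{jj} = \frac{A_{ij}}{A_{ji}} B_{ii}.\end{equation*}
This is exactly the recursion used in the construction.

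The second step shows that the diagonal entries never vanish. We normalise $B_{i_0 i_0} = 1$ at the base vertex. By indecomposability every vertex is reached by a path, and along each edge the factor $A_{ij}/A_{ji}$ is a nonzero (positive) ratio. So every $B_{jj}$ is nonzero, indeed positive. This nonvanishing is the point that needs care, since the recursion would otherwise give no information.

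The third step chains the relation around a circuit $i_1, \ldots, i_k, i_1$:
\begin{equation*}B_{i_1 i_1} = \frac{A_{i_1 i_2} A_{i_2 i_3} \cdots A_{i_k i_1}}{A_{i_2 i_1} A_{i_3 i_2} \cdots A_{i_1 i_k}}\, B_{i_1 i_1}.\end{equation*}
Since $B_{i_1 i_1} \neq 0$, we cancel it and clear denominators. This yields precisely the cyclic-product identity of Lemma \ref{lem:symmcond}, so $A$ is symmetrisable.

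Alternatively, one can read off a symmetrisation directly. Put $D = \mathrm{diag}(2/B_{11}, \ldots, 2/B_{nn})$, which is defined because the $B_{ii}$ are nonzero. The condition of Theorem \ref{thm:invariantformcond} then says $A = DB$ with $B$ symmetric, which is symmetrisability by definition. I would present this version as a one-line check, with the chain-around-circuit argument as a remark connecting it to Lemma \ref{lem:symmcond}.

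The main obstacle is the nonvanishing of the $B_{ii}$, since both arguments divide by them. It follows from the inductive definition together with indecomposability and the fact that the off-diagonal nonzero entries of $A$ are nonzero integers.
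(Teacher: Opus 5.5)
Your proposal is correct, and your steps one to three are exactly the paper's proof. The paper also derives $B_{jj} = \frac{A_{ij}}{A_{ji}} B_{ii}$ along an edge, chains it around a circuit $i_1,\ldots,i_k,i_1$, and reads off the cyclic identity of Lemma \ref{lem:symmcond}. You additionally make explicit that $B_{i_1 i_1} \neq 0$, because it is obtained from the normalised base entry by nonzero factors; the paper uses this tacitly when it reads off the condition.

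Your alternative, $A = DB$ with $D = \mathrm{diag}(2/B_{11}, \ldots, 2/B_{nn})$, is a genuinely different and shorter route. It verifies symmetrisability straight from the definition, using only Theorem \ref{thm:invariantformcond} and the symmetry of $B$, and bypasses the cyclic criterion entirely. The paper's chaining argument buys something else: it shows that consistency of the path construction around each circuit is exactly the cyclic condition on that circuit. That is the form in which the lemma connects to the converse direction, existence of the form when $A$ is symmetrisable.
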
 
\begin{proof} Note that if $A_{ij} \neq 0$, \begin{equation*}B_{jj} = \dfrac{2}{A_{ji}} B_{ij} = \dfrac{A_{ij}}{A_{ji}} B_{ii}\end{equation*} Let $i_1, \ldots, i_k, i_1$ be a circuit in the Dynkin diagram. Then applying the above result iteratively we see that \begin{equation*}B_{i_1 i_1} = \frac{A_{i_1i_2} \ldots A_{i_ki_1}}{A_{i_2i_1} \ldots A_{i_1i_k}} B_{i_1 i_1}\end{equation*} 

 So the condition for this to be well-defined is exactly \begin{equation*}A_{i_1i_2} \ldots A_{i_ki_1} = A_{i_2i_1} \ldots A_{i_1i_k}\end{equation*} the symmetrisability condition for this circuit. \end{proof}

 Now we have a form in the case where $A$ is symmetrisable. We'll here summarise the known theory of Kac-Moody algebras that leads to this result. 
 
\begin{definition} (\cite{Carter}, pp360-361) Let $A$ be a symmetrisable generalised Cartan matrix and write $A = DB$ where $D = \text{diag}(d_1, \ldots, d_n)$ is diagonal and non-singular and $B$ is symmetric. Let $(\mathfrak{h}, \Pi, \Pi^\wedge)$ be a minimal realisation of $A$, with $\Pi = \left\{\alpha_1, ... \alpha_n\right\}$ and $\Pi^\wedge = \left\{h_1, ... h_n\right\}$. Extend $h_1, \ldots, h_n$ to a basis $h_1, \ldots, h_n, x_1, \ldots, x_{n-l}$ of $\mathfrak{h}$. Define a symmetric bilinear form $\langle \cdot, \cdot \rangle: H \times H \to \mathbb{C}$ by 
\begin{itemize}
    \item $\langle h_i, h_j \rangle = d_i d_j B_{ij}$ for $i, j = 1, \ldots, n$,
    \item $\langle h_i, x_j \rangle = \langle x_j h_i \rangle = d_i \alpha_i(x_j)$ for $i = 1, \ldots, n, j = 1, \ldots, n -l$, and,
    \item $\langle x_i, x_j \rangle = 0$ for $i, j = 1, \ldots, n -l$. \end{itemize} \end{definition}

 \begin{proposition} (Proposition 16.1 of \cite{Carter}, p.361) This form on $\mathfrak{h}$ is non-degenerate. \end{proposition}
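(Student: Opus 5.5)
Since the form is symmetric, non-degeneracy on $\mathfrak{h}$ is equivalent to its Gram matrix in the basis $h_1, \ldots, h_n, x_1, \ldots, x_{n-l}$ being invertible. I will write this Gram matrix in block form and show directly that its kernel is zero. Let $D = \text{diag}(d_1,\ldots,d_n)$ and let $X$ be the $n \times (n-l)$ matrix with entries $X_{ij} = \alpha_i(x_j)$. The defining formulas say the Gram matrix is
\begin{equation*}
G = \begin{pmatrix} DBD & DX \\ X^T D & 0 \end{pmatrix}.
\end{equation*}
Since $A = DB$, we have $DBD = AD$.

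Suppose $G \binom{u}{v} = 0$ with $u \in \mathbb{C}^n$ and $v \in \mathbb{C}^{n-l}$. The second block row gives $X^T D u = 0$. The first block row gives $ADu + DXv = 0$; since $D$ is invertible and $A = DB$, this is equivalent to $BDu + Xv = 0$. Put $w = Du$. Then $w^T X = 0$, and $Bw = -Xv$.

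Now use minimality of the realisation. Because $\alpha_1,\ldots,\alpha_n$ are linearly independent in $\mathfrak{h}^*$, the $n \times (2n-l)$ matrix of values $\alpha_i(h_j), \alpha_i(x_k)$, namely $(A^T \mid X)$, has rank $n$. Its rows are $\alpha_i$ and $A_{ji} = \alpha_i(h_j)$, so with the paper's convention $\alpha_j(h_i) = A_{ij}$ the first block is $A^T$. The condition $w^T X = 0$ together with $Bw = -Xv$ should force $w = 0$.

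Concretely, compute $w^T B w = -w^T X v = 0$. That alone is not enough, so I will use the rank conditions instead. Dimension counting gives: $\operatorname{rank}(A^T \mid X) = n$ and $\operatorname{rank} A^T = l$, so the columns of $X$ must span a complement to the column space of $A^T$ in $\mathbb{C}^n$; since $X$ has $n-l$ columns, they are independent and give a direct sum $\operatorname{col}(A^T) \oplus \operatorname{col}(X) = \mathbb{C}^n$. Because $A^T = BD$, $\operatorname{col}(A^T) = \operatorname{col}(B)$. From $Bw = -Xv$ and the direct sum, both sides vanish: $Bw = 0$ and $Xv = 0$, hence $v = 0$. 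Then $w \in \ker B$ and $w \perp \operatorname{col}(X)$ (from $w^TX = 0$). Also $w \perp \operatorname{col}(B)$, since $B$ is symmetric and $w \in \ker B$. So $w$ is orthogonal (under the non-Hermitian dot product) to all of $\mathbb{C}^n$, forcing $w = 0$, hence $u = 0$.

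The main obstacle is the linear-algebra step establishing the direct sum decomposition, that is, checking carefully that minimality ($\dim\mathfrak{h} = 2n-l$ plus independence of $\Pi$) gives $\operatorname{rank}(A^T\mid X) = n$ with the right orientation of transposes. Everything else is block-matrix bookkeeping.
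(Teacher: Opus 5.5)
Your proof is correct. The block Gram matrix is right, and the decomposition $\mathrm{col}(A^T)\oplus\mathrm{col}(X)=\mathbb{C}^n$, which follows from $\mathrm{rank}(A^T\mid X)=n$ and $\mathrm{rank}\,A^T=l$, does force $v=0$, then $Bw=0$, then $w=0$.

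The paper omits this proof and defers to Carter. Your argument is essentially the standard one there, written in matrix form:
\begin{itemize}
\item your first block row says $\alpha_j(h)=0$ for all $j$;
\item your direct sum packages two facts: the centre lies in $\mathrm{span}(h_1,\ldots,h_n)$, and the $\alpha_i$ are linearly independent;
\item your last step is the observation that $\sum_i d_iu_i\alpha_i$ vanishes on every $h_j$ (using symmetry of $B$) and on every $x_k$, and hence is zero.
\end{itemize}
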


 Proof is omitted, but see \cite{Carter}, pp.361-363.

 This gives rise to an isomorphism $\varphi: \mathfrak{h} \to \mathfrak{h}^*$ given by $\varphi(h) = t_h = \langle h, \cdot \rangle$. Thus it induces also a symmetric bilinear form $(\cdot, \cdot)$ on $\mathfrak{h}^*$ defined by $(t_{h_1}, t_{h_2}) = \langle h_1, h_2 \rangle$.

\begin{proposition} (\cite{Carter}, pp.373-374) $(\cdot, \cdot)$ is $W$-invariant. \end{proposition}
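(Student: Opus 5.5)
We will verify $W$-invariance of $(\cdot,\cdot)$ on $\mathfrak{h}^*$ generator by generator. Since $W$ is generated by the $s_i$, it suffices to show $(s_i\lambda, s_i\mu) = (\lambda,\mu)$ for all $\lambda,\mu\in\mathfrak{h}^*$ and each $i$. The plan is to realise $s_i$ on $\mathfrak{h}^*$ as an orthogonal reflection with respect to $(\cdot,\cdot)$. An orthogonal reflection $x \mapsto x - 2\frac{(x,v)}{(v,v)}v$ is automatically an isometry; a one-line expansion confirms this.

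\textbf{Extending $s_i$ to $\mathfrak{h}^*$.} The natural extension is $s_i(\lambda) = \lambda - \lambda(h_i)\alpha_i$. On the simple roots it gives $s_i(\alpha_j) = \alpha_j - \alpha_j(h_i)\alpha_i = \alpha_j - A_{ij}\alpha_i$, so it agrees with the fundamental reflection on $H$.

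\textbf{Key step.} We need to identify $\lambda(h_i)$ in terms of the form, namely
\[
\lambda(h_i) = \frac{2(\lambda,\alpha_i)}{(\alpha_i,\alpha_i)}.
\]

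1. First show that $t_{h_i}$ is a multiple of $\alpha_i$. For any $h\in\mathfrak{h}$ we have $t_{h_i}(h) = \langle h_i, h\rangle$.
   On the basis vector $h_j$, this gives $d_id_jB_{ij}$. Since $A = DB$ with $A_{ij}=d_iB_{ij}$ and $B$ symmetric, $d_id_jB_{ij} = d_jA_{ji}$. Convention requires care here: $\alpha_j(h_i)=A_{ij}$, so we want $\langle h_i,h_j\rangle$ to be proportional to $\alpha_i(h_j) = A_{ji}$. Indeed $d_id_jB_{ij} = d_i(d_jB_{ji}) = d_iA_{ji}$, which is consistent.
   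On $x_j$, this gives $d_i\alpha_i(x_j)$.
   Hence $t_{h_i} = d_i\alpha_i$, i.e. $\alpha_i = d_i^{-1}t_{h_i}$.

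2. Next compute $(\alpha_i,\alpha_i) = d_i^{-2}\langle h_i,h_i\rangle = d_i^{-2}d_i^2B_{ii} = B_{ii} = 2/d_i$. This is nonzero because $A_{ii}=2$.

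3. For $\lambda = t_h$, compute $(\lambda,\alpha_i) = d_i^{-1}\langle h,h_i\rangle = d_i^{-1}t_{h_i}(h)\cdot$ — more cleanly, $(\lambda,\alpha_i) = d_i^{-1}\langle h, h_i\rangle = d_i^{-1}\lambda(h_i)$, using $\lambda(h_i) = t_h(h_i) = \langle h,h_i\rangle$.

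4. Combining steps 2 and 3,
\[
\frac{2(\lambda,\alpha_i)}{(\alpha_i,\alpha_i)} = \frac{2d_i^{-1}\lambda(h_i)}{2d_i^{-1}} = \lambda(h_i).
\]

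Since $\varphi$ is an isomorphism, every $\lambda\in\mathfrak{h}^*$ has the form $t_h$, so the identity holds on all of $\mathfrak{h}^*$.

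\textbf{Conclusion.} By the key step, $s_i$ is the orthogonal reflection in $\alpha_i$, and expanding $(s_i\lambda,s_i\mu)$ shows the cross terms cancel. The form on $\mathfrak{h}^*$ is therefore $W$-invariant. Restricting it to $H_k\subseteq\mathfrak{h}^*$ gives a $W$-invariant form on $H$, which should be the symmetric form predicted by Theorem \ref{thm:invariantformcond}.

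The main obstacle is the index bookkeeping in step 1: checking that $t_{h_i}=d_i\alpha_i$ uses exactly the symmetry of $B$ together with the chosen values $\langle h_i,x_j\rangle = d_i\alpha_i(x_j)$. Step 1 is where the symmetrisability hypothesis is used.
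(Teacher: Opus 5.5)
Your proof is correct. It is essentially the standard argument that the paper defers to in Carter (pp.~373--374), since the paper gives no proof of its own. That argument uses $t_{h_i}=d_i\alpha_i$ (the paper's lemma $\varphi(h_i)=d_i\alpha_i$) to obtain $\lambda(h_i)=2(\lambda,\alpha_i)/(\alpha_i,\alpha_i)$, so that each $s_i$ is the orthogonal reflection in $\alpha_i$ and hence an isometry.

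One slip: the intermediate identity $d_id_jB_{ij}=d_jA_{ji}$ in step 1 is false as written; it should read $d_jA_{ij}=d_iA_{ji}$. You correct this in the next line, so the conclusion is unaffected.
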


 Proof is once again omitted but may be found in \cite{Carter}, pp.373-374. 

 Now $(\cdot, \cdot)$ restricts to a symmetric bilinear $W$-invariant form on the Cartan representation $H$ of $W$. This, by the above, is unique up to rescaling. So $H$ and its dual are isomorphic if and only if this $(\cdot, \cdot)$ is non-degenerate. 

\begin{lemma} $\varphi(h_i) = d_i \alpha_i$. \end{lemma}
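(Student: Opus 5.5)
To show $\varphi(h_i) = d_i \alpha_i$, I will compare the two linear functionals $t_{h_i} = \langle h_i, \cdot \rangle$ and $d_i \alpha_i$ on $\mathfrak{h}$. Both are linear, so it is enough to check that they agree on the basis $h_1, \ldots, h_n, x_1, \ldots, x_{n-l}$ used to define the form.

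First evaluate on $h_j$. By definition $\langle h_i, h_j \rangle = d_i d_j B_{ij}$. Since $A = DB$ we have $A_{ji} = d_j B_{ji}$, and $B$ is symmetric, so $d_j B_{ij} = d_j B_{ji} = A_{ji}$. Hence $\langle h_i, h_j \rangle = d_i A_{ji}$. On the other side, the minimal realisation gives $\alpha_i(h_j) = A_{ji}$, so $d_i \alpha_i(h_j) = d_i A_{ji}$. The two sides agree.

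Next evaluate on $x_j$. By definition $\langle h_i, x_j \rangle = d_i \alpha_i(x_j)$, which is exactly $d_i\alpha_i(x_j)$. So the two functionals also agree on every $x_j$.

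Agreement on a basis gives $t_{h_i} = d_i \alpha_i$, which is the claim.

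The only place needing care is the index convention. The realisation uses $\alpha_j(h_i) = A_{ij}$, and $A = DB$ gives $A_{ij} = d_i B_{ij}$, so the transposition has to be tracked correctly. Symmetry of $B$ is what makes the factor $d_j$ combine with $B_{ij}$ to give $A_{ji}$, matching $\alpha_i(h_j)$. Once the indices are straight, every step is a direct substitution.
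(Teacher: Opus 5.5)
Your proposal is correct and is the paper's argument. The paper only asserts that $t_{h_i}$ and $d_i\alpha_i$ agree on the basis $h_1,\ldots,h_n,x_1,\ldots,x_{n-l}$; you carry out that check explicitly, including the use of the symmetry of $B$ to get $\langle h_i,h_j\rangle = d_i d_j B_{ij} = d_i A_{ji} = d_i\alpha_i(h_j)$.
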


\begin{proof}By construction of $\varphi$ and the form on $\mathfrak{h}$, they agree on the basis $h_1, \ldots, h_n, x_1, \ldots, x_{n-l}$ of $H$.\end{proof}

So the space spanned by $\varphi(h_1), \ldots, \varphi(h_n)$ is $H$. With respect to this basis of $H$, the matrix of $(\cdot, \cdot)$ is $B$, which is such that $A = DB$ where $D$ is diagonal and non-singular. Thus the form is non-degenerate if and only if $\det B = 0$ if and only if $\det A = 0$.

\begin{theorem} \label{thm:dualisomorphcond} Let $A$ be a generalised Cartan matrix. Then $H_A$ and its dual are isomorphic if and only if $\det A \neq 0$ and $A$ is symmetrisable.\end{theorem}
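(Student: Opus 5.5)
By Proposition \ref{prop:isomorphiffform}, $H_A \cong H_A^*$ if and only if $H_A$ carries a non-degenerate $W$-invariant bilinear form. So the whole proof reduces to the existence question for such a form, using the results just established. I will prove the two directions separately.

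For the forward direction, suppose a non-degenerate $W$-invariant form exists, with matrix $B$ in the basis $\alpha_1, \ldots, \alpha_n$. By Proposition \ref{prop:symmform} it is symmetric, so Theorem \ref{thm:invariantformcond} gives $A_{ij} B_{ii} = 2 B_{ij}$ for all $i,j$.

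The first step is to show that $B_{ii} \neq 0$ for every $i$. If $B_{ii} = 0$ for some $i$, then $B_{ij} = 0$ for all $j$, so the $i$th row of $B$ vanishes. This contradicts non-degeneracy. Hence $B_{ii} \neq 0$ for all $i$, and the form is a nonzero multiple of the one produced by the path construction, since that construction determines $B$ up to rescaling from a single $B_{ii}$ by indecomposability. That construction is therefore well-defined, and the preceding Lemma shows that $A$ is symmetrisable.

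Next, let $D = \mathrm{diag}(B_{11}/2, \ldots, B_{nn}/2)$, which is non-singular. The relation $A_{ij} B_{ii} = 2B_{ij}$ then reads $DA = B$, so $\det B = \det D \cdot \det A$. Non-degeneracy means $\det B \neq 0$, and so $\det A \neq 0$.

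For the converse, suppose $A$ is symmetrisable with $\det A \neq 0$. Write $A = DB'$ with $B'$ symmetric, and take the restriction of Carter's form $(\cdot,\cdot)$ to $H$, which is $W$-invariant. In the basis $\varphi(h_i) = d_i \alpha_i$ its matrix is $B'$, and $\det B' = \det A / \det D \neq 0$, so the form is non-degenerate. (When $\det A \neq 0$ we in fact have $H = \mathfrak{h}^*$, so nothing is lost by restricting.) Proposition \ref{prop:isomorphiffform} then gives $H_A \cong H_A^*$.

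Alternatively, one can avoid Carter's form altogether and use the symmetric matrix $B = D^{-1}A$ directly, after normalising $D$ as in the alternative proof of Lemma \ref{lem:symmcond}. This $B$ satisfies $A_{ij} B_{ii} = 2B_{ij}$ provided the diagonal of $D^{-1}$ is scaled so that $B_{ii} = 2/d_i$, and Theorem \ref{thm:invariantformcond} then shows the form is $W$-invariant.

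The main obstacle is the step showing $B_{ii} \neq 0$ in the forward direction. It has to be settled before the symmetrisability lemma can be applied, since the earlier discussion leaves open that $B_{ii}$ might vanish. The row-vanishing argument above handles it cleanly. A smaller point is that the text just before the theorem contains a slip: it should read "non-degenerate iff $\det B \neq 0$".
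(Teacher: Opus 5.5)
Your proof is correct and follows the paper's overall structure: Proposition \ref{prop:isomorphiffform} to pass to forms, Proposition \ref{prop:symmform} and Theorem \ref{thm:invariantformcond} to constrain the matrix, the path-construction lemma for symmetrisability, and Carter's form for the converse. The real difference is the determinant step. The paper first obtains symmetrisability, then argues that every invariant form is a multiple of Carter's restricted form, and reads off non-degeneracy from $A = DB$. You instead show each $B_{ii}\neq 0$ directly from non-degeneracy, which the paper's ``unique up to rescaling'' leaves implicit. You then read $B = \mathrm{diag}(B_{11}/2,\ldots,B_{nn}/2)\,A$ straight off the invariance condition.

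That identity gives more than you use. Since $B$ is symmetric, $A = \mathrm{diag}(2/B_{11},\ldots,2/B_{nn})\,B$ is symmetrisable by definition, so the path construction and the cyclic-product lemma are unnecessary.

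Your alternative converse is also sound, and simpler than you make it. If $A = DB'$ with $B'$ symmetric, then automatically $B'_{ii} = 2/d_i$ and $2B'_{ij} = 2A_{ij}/d_i = A_{ij}B'_{ii}$. So no rescaling or positivity normalisation of $D$ is needed before invoking Theorem \ref{thm:invariantformcond}. This $B'$ is exactly Carter's form written in the basis $\alpha_1,\ldots,\alpha_n$. With the normalisation $\langle h_i,h_j\rangle = d_id_jB'_{ij}$, its matrix in the basis $d_i\alpha_i$ is $DB'D$, not $B'$ as both you and the paper state; this does not affect non-degeneracy.

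What your version buys is a self-contained proof that uses only the invariance condition and never Kac--Moody theory. That is attractive given the paper's remark in Chapter \ref{chap:7} that this machinery does not transfer easily to characteristic $p$. The paper's route instead ties the invariant form to the standard form on $\mathfrak{h}^*$. Its second proof (Theorems \ref{thm:det0notisomorph}, \ref{thm:dualtransposeisomorph} with Corollary \ref{cor:dualisomorphcond}) avoids forms altogether by identifying $H_A^*$ with $H_{A^T}$.
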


There is no known condition in terms of the Dynkin diagram for the determinant of a generalised Cartan matrix to be zero, though Theorem \ref{thm:gcmclassification} gives a partial result: finite-type GCMs never have determinant $0$ and affine ones always do. 

We're also interested in whether the dual representation can be isomorphic to some Cartan representation, even if not the original representation. Let $a_1, \ldots, a_n$ be the dual basis of $\alpha_1, \ldots, \alpha_n$, and let $S_i$ be the matrix of $s_i$ acting on the dual space, with respect to this basis. 

Then $S_i^T$ is the matrix of $s_i$ acting on $H$ with respect to the basis $\alpha_1, \ldots, \alpha_n$. We can write down the entries of this matrix: \begin{equation*}(S_i^T)_{jk} = \begin{cases}
    -1, & \text{if }  j = k = i\\
    1, & \text{if }j = k \neq i\\
    -A_{ik}, & \text{if } j = i \neq k\\
    0, & \text{otherwise}
\end{cases}\end{equation*}

This means that \begin{equation*}(S_i)_{jk} = \begin{cases}
    -1, & \text{if } j = k = i\\
    1, & \text{if } j = k \neq i\\
    -A_{ij}, & \text{if }k = i \neq j\\
    0, & \text{otherwise}
\end{cases}\end{equation*}

i.e. the identity except with the $i$th column replaced by $-1$ in the $i$th entry and $-A_{ij}$ in the $j$th entry for $j \neq i$.

Now this is isomorphic to some Cartan matrix if and only if we can transform to a basis $\left\{e_1, \ldots, e_n\right\}$ such that the $s_i$ act on the $e_i$ by $s_i \cdot e_j = e_j - B_{ij} e_i$ for some generalised Cartan matrix $B$ with Weyl group $W$. In particular, we require each $e_i$ to be an eigenvector of $S_i$ with eigenvector $-1$.

Now $S_i$ has determinant $-1$ since $S_i^T$ does, and has a $1$-eigenspace of dimension $n-1$, so it must have a one-dimensional $-1$-eigenspace. We can in fact solve the system of linear equations to find the eigenvector: suppose that

\begin{equation*}S_i \begin{pmatrix}\lambda_1 \\ \ldots\\ \lambda_n\\\end{pmatrix} = -\begin{pmatrix}\lambda_1 \\ \ldots\\ \lambda_n\\\end{pmatrix}  \end{equation*}

Then we obtain the equations $- \lambda_i = - \lambda_i$ and, for each $j \neq i$, \begin{equation*}\lambda_j = \dfrac{1}{2} \lambda_i A_{ij}\end{equation*} It is convenient to scale by taking $\lambda_i = 2$ so that $\lambda_j = A_{ij}$ for each $j$. That is,

\begin{equation*}e_i = \Sigma_{j=1}^n A_{ij} a_j = A a_i\end{equation*}

So in particular, if the columns of $A$ are linearly dependent then so are the $e_i$, meaning that the $e_i$ cannot possibly form a basis of $H^*$.

\begin{theorem}\label{thm:det0notisomorph} Suppose $\det A = 0$. Then the dual representation of $H_A$ is not isomorphic to any Cartan representation. \end{theorem}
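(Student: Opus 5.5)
We argue by contradiction, using the eigenvector computation just carried out. Suppose $\psi: H_B \to H_A^*$ is an isomorphism of representations of $W$, where $H_B$ is the Cartan representation of some generalised Cartan matrix $B$ with Weyl group $W$ and simple roots $\beta_1, \ldots, \beta_n$. The argument of Proposition \ref{prop:isomorphscalar} applies verbatim: $s_i \cdot \psi(\beta_i) = \psi(s_i \cdot \beta_i) = -\psi(\beta_i)$, so $\psi(\beta_i)$ lies in the $-1$-eigenspace of $S_i$ acting on $H_A^*$. This eigenspace is one-dimensional, because $S_i$ has determinant $-1$ and a $1$-eigenspace of dimension $n-1$, and we computed above that it is spanned by $e_i = \sum_j A_{ij} a_j$. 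So $\psi(\beta_i) = \mu_i e_i$ for some scalar $\mu_i$.

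Since $\psi$ is an isomorphism, the vectors $\psi(\beta_1), \ldots, \psi(\beta_n)$ form a basis of $H_A^*$. Every $\psi(\beta_i)$ lies in the span of $e_1, \ldots, e_n$, so these $n$ vectors must themselves span $H_A^*$ and hence be linearly independent. The coordinate vector of $e_i$ in the dual basis $a_1, \ldots, a_n$ is the $i$th row of $A$, so any linear dependence among the rows of $A$ gives the same dependence among the $e_i$. If $\det A = 0$ the rows of $A$ are dependent, so the $e_i$ span a proper subspace and we have a contradiction. This is the same as the remark before the statement about the columns, after transposing.

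The step I expect to need care is the identification of the $-1$-eigenspace, in particular whether the $e_i$ are given by rows or by columns of $A$. The definition of $S_i$ and the displayed formula $e_i = \sum_j A_{ij} a_j$ should be checked against each other. This does not affect the conclusion, because $\det A = 0$ makes both the rows and the columns of $A$ linearly dependent, so whichever is correct the $e_i$ are dependent. The argument also holds whether or not $B$ has the same Coxeter data as $A$, since it uses only the reflection structure $s_i \cdot \beta_i = -\beta_i$.
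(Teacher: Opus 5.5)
Your proof is correct and is essentially the paper's own argument. An isomorphism must send each $\beta_i$ into the one-dimensional $-1$-eigenspace of $S_i$, which is spanned by $e_i=\sum_j A_{ij}a_j$, and $\det A=0$ makes the $e_i$ linearly dependent, so they cannot span $H_A^*$. Your caution about rows versus columns is justified: the coordinate vector of $e_i$ is indeed the $i$th row of $A$. The paper's ``$=Aa_i$'' and its reference to columns is therefore a slip, but a harmless one, since a singular $A$ has both dependent rows and dependent columns.
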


We'll thus suppose further that $\det A \neq 0$. Then we can transform to a basis $\left\{e_1, \ldots, e_n\right\}$ where $e_i$ is a $-1$-eigenvector of $S_i$ for each $i$. Now we want to consider $S_i \cdot e_j$. We'll calculate this in the dual basis and transform afterwards.

\begin{equation*}S_i \cdot e_j = \Sigma_{k=1}^n (A_{jk} a_k - A_{ji} A_{ik} a_k) = e_j - A_{ji} e_k\end{equation*} 

This is a Cartan representation as above, with generalised Cartan matrix $A^T$ (which has Weyl group $W$.) 

\begin{theorem}\label{thm:dualtransposeisomorph} Suppose $A$ is a generalised Cartan matrix with Weyl group $W$ and $\det A \neq 0$. Then the dual representation of $A$ is isomorphic to the Cartan representation of $A^T$.\end{theorem}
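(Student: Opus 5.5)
The plan is to write down an explicit isomorphism from $H_{A^T}$ to $H_A^*$, guided by the eigenvector computation just carried out. Let $a_1, \ldots, a_n$ be the dual basis of $\alpha_1, \ldots, \alpha_n$, and let $\beta_1, \ldots, \beta_n$ be the simple roots of $H_{A^T}$. Then $s_i \cdot \beta_j = \beta_j - A_{ji}\beta_i$. Define a linear map $\psi: H_{A^T} \to H_A^*$ by
\begin{equation*}\psi(\beta_i) = e_i := \Sigma_{k=1}^n A_{ik} a_k .\end{equation*}
The proof then splits into two parts: bijectivity and $W$-equivariance.

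\textbf{Bijectivity.} The matrix of $\psi$ with respect to the bases $\{\beta_i\}$ and $\{a_k\}$ is $A^T$. Since $\det A \neq 0$, this matrix is invertible, so the $e_i$ form a basis of $H_A^*$ and $\psi$ is a vector space isomorphism.

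\textbf{Equivariance.} It suffices to check the generators, i.e. that $s_i \cdot e_j = e_j - A_{ji} e_i$ for all $i, j$. The action of $s_i$ on the dual is easiest to compute directly: $(s_i \cdot \theta)(v) = \theta(s_i^{-1} v) = \theta(s_i v)$, since $s_i^2 = 1$. Using $s_i \alpha_m = \alpha_m - A_{im}\alpha_i$, we get
\begin{equation*}(s_i \cdot a_k)(\alpha_m) = \delta_{km} - A_{im}\delta_{ki}.\end{equation*}
Hence $s_i \cdot a_k = a_k$ for $k \neq i$, and $s_i \cdot a_i = a_i - \Sigma_m A_{im} a_m = a_i - e_i$. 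Substituting,
\begin{equation*}s_i \cdot e_j = \Sigma_k A_{jk} a_k - A_{ji} e_i = e_j - A_{ji} e_i.\end{equation*}
This is exactly $\psi(s_i \cdot \beta_j)$. As a sanity check, the case $j = i$ gives $s_i e_i = e_i - 2e_i = -e_i$, which matches the earlier eigenvector computation.

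\textbf{Main obstacle.} The only real difficulty is bookkeeping. One must keep straight whether $S_i$ or $S_i^T$ acts and which index of $A$ is summed over. The computation sketched in the text before the statement has index slips (it writes $e_k$ where $e_i$ is meant). I would therefore compute directly from the definition of the dual action, as above, rather than through the matrices $S_i$. Finally, I would note that $A^T$ is a generalised Cartan matrix with the same products $A_{ij}A_{ji}$, so by Theorem \ref{thm:weyliscoxeter} its Weyl group is $W$. This makes $H_{A^T}$ a Cartan representation of the same group, as the statement requires.
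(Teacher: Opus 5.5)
Your proof is correct and is essentially the paper's argument: you use the same basis $e_i = \sum_k A_{ik} a_k$ of $H_A^*$, which is a basis because $\det A \neq 0$, and the same computation $s_i \cdot e_j = e_j - A_{ji} e_i$ identifying $H_A^*$ with the Cartan representation of $A^T$. Working directly from the dual action rather than through the $-1$-eigenvectors of $S_i$ is cleaner: it corrects the paper's $e_k$/$e_i$ slip and never divides by $2$, so your argument applies unchanged over any field in which $\det A$ is invertible.
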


Note that this and \ref{cor:dualisomorphcond} give an alternate proof of \ref{thm:dualisomorphcond}.

\chapter{Fields of Finite Characteristic}

\label{chap:7}
In the previous chapters, we restricted to the case where the Cartan representation is over a field of characteristic $0$. But the results therein do largely hold over fields of finite characteristic, with some additional considerations which we set out in this chapter. The main exception is the reflection representation of a Coxeter group, which is naturally defined over $\mathbb{R}$. Its definition can be extended to other fields, but doing so is beyond the scope of this text. 

In what follows $A$ is a generalised Cartan matrix with Weyl group $W$ generated by the reflections $s_1, \ldots, s_n$, and $k$ is a field of finite characteristic $p$. The simple roots $\alpha_1,\ldots, \alpha_n$ and the Cartan representation $H$ of $W$ over $k$ are defined as in Chapter \ref{chap:2} and as used throughout.

Note that the representation only depends on the values of $A_{ij}$ modulo $p$. So we want to define a notion of reducing a GCM modulo $p$. We can't always do this, however.

\begin{definition}Let $A$ be a generalised Cartan matrix and $p$ a prime. Then we say that $A$ is reducible modulo $p$ if whenever $p$ divides $A_{ij}$, $p$ also divides $A_{ji}$.\end{definition}

We can think of this as the analogue of $A_{ij} = 0$ if and only if $A_{ji} = 0$. 

\begin{definition}Let $A$ be a generalised Cartan matrix which is reducible modulo $p$ for some prime $p$. Then the reduction of $A$ mod $p$ is the matrix $\overline{A}$ where $\overline{A}_{ij} = A_{ij} + p \mathbb{Z}$. \end{definition}

Because we insist on it being reducible, we can then think of this as a generalised Cartan matrix with entries in $\mathbb{F}_p$. Evidently two GCMs with the same reduction modulo $p$ will have isomorphic Cartan representations over a field of characteristic $p$.

We can also define the Dynkin diagram modulo $p$ of a GCM $A$ to be the Dynkin diagram of the unique GCM $A'$ with $\overline{A} = \overline{A'}$ and $A'_{ij} \in \left\{0, -1, \ldots,-(p-1)\right\}$ for $i \neq j$.

Now for this class of GCM, the results of Chapter \ref{chap:5} generalise straightforwardly as follows. Fix a prime $p$ and a field $k$ of characteristic $p$. Let $A, B$ be GCMs which are reducible modulo $p$ and indecomposable, with Cartan representations $H_A, H_B$ respectively over $k$, and the same Weyl group $W$.

\begin{proposition} \label{prop:isomorphscalarmodp} Suppose $\varphi: H_A \to H_B$ is an isomorphism. Then for each $i$, $\varphi(\alpha_i) = \lambda_i \beta_i$ for some scalar $\lambda_i$. \end{proposition}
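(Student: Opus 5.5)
The plan is to copy the argument of Proposition \ref{prop:isomorphscalar}, and to check carefully the one step that depends on the characteristic. Since $\varphi$ is $W$-equivariant, we have
\begin{equation*}s_i \cdot \varphi(\alpha_i) = \varphi(s_i \cdot \alpha_i) = \varphi(-\alpha_i) = -\varphi(\alpha_i).\end{equation*}
So $\varphi(\alpha_i)$ lies in the $-1$-eigenspace of $s_i$ acting on $H_B$. It is nonzero because $\varphi$ is injective. The statement then follows once we know this eigenspace is exactly $k\beta_i$.

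To identify the eigenspace, write $v = \sum_j c_j \beta_j$ and use $s_i \cdot \beta_j = \beta_j - \overline{B}_{ij}\beta_i$. Then
\begin{equation*}s_i \cdot v = \sum_{j \neq i} c_j \beta_j + \Big(-c_i - \sum_{j\neq i} \overline{B}_{ij} c_j\Big)\beta_i.\end{equation*}
Setting $s_i \cdot v = -v$ and comparing $\beta_j$-coefficients for $j \neq i$ gives $c_j = -c_j$, that is, $2c_j = 0$. When $p \neq 2$ this forces $c_j = 0$ for all $j \neq i$, so $v \in k\beta_i$. The $\beta_i$-coefficient equation then holds automatically, and the eigenspace is one-dimensional, spanned by $\beta_i$.

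The main obstacle is $p = 2$. There $-1 = 1$, so the "$-1$-eigenspace" is the fixed space of $s_i$. The computation above shows this fixed space is $\{v : \sum_{j \neq i} \overline{B}_{ij} c_j = 0\}$ (using $2c_i = 0$), which is typically a hyperplane. So the argument breaks down and the proposition may fail as stated.

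For $p = 2$ I would first look for a replacement argument. In characteristic $2$ the map $s_i - 1$ sends every $v$ to a multiple of $\beta_i$, namely $(s_i - 1)v = -(\sum_j \overline{B}_{ij} c_j)\beta_i$. Its image is $k\beta_i$ whenever row $i$ of $\overline{B}$ is nonzero mod $2$; indeed $\overline{B}_{ii} = 2 = 0$, so we need some off-diagonal entry to be odd. Equivariance gives
\begin{equation*}\varphi\big((s_i - 1)\alpha_i\big) = (s_i - 1)\varphi(\alpha_i).\end{equation*}
However, $(s_i - 1)\alpha_i = -2\alpha_i = 0$ in characteristic $2$, so this identity gives nothing. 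Comparing the images $\operatorname{im}(s_i - 1)$ only shows that $\varphi$ maps $k\alpha_i$ onto $k\beta_i$ when both are images, which does not directly say where $\alpha_i$ itself goes.

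I therefore expect that the proposition needs the hypothesis $p \neq 2$, or an extra argument in that case, and I would note this restriction explicitly. For odd $p$ the proof is exactly the eigenspace computation above.
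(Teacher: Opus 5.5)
For odd $p$ your argument is the paper's, namely the eigenspace argument of Proposition \ref{prop:isomorphscalar}. There is a genuine gap at $p=2$. You leave that case unproved and suggest restricting to $p\neq 2$, but the paper proves it, and the missing step is one you nearly wrote down.

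You applied equivariance of $s_i-1$ to $\alpha_i$, which it kills. Apply it instead to $\alpha_j$ for some $j$ with $A_{ij}$ odd. In characteristic $2$ this gives $s_i\cdot\alpha_j=\alpha_j-A_{ij}\alpha_i=\alpha_j+\alpha_i$, so $\alpha_i=(s_i-1)\alpha_j$. Hence
\begin{equation*}\varphi(\alpha_i)=\varphi\big((s_i-1)\alpha_j\big)=(s_i-1)\,\varphi(\alpha_j)\in k\beta_i,\end{equation*}
by your own observation that $s_i-1$ maps all of $H_B$ into $k\beta_i$. This is exactly the paper's argument.

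Your image comparison was in fact already sufficient. $\varphi$ is a bijection commuting with $s_i-1$, so it carries the image of $s_i-1$ on $H_A$ onto the image of $s_i-1$ on $H_B$, which lies in $k\beta_i$. When the former image is $k\alpha_i$, this says precisely that $\varphi(\alpha_i)\in k\beta_i$. That is ``where $\alpha_i$ itself goes''. You only need containment in $k\beta_i$ on the $H_B$ side, not equality.

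What the argument genuinely needs is some $j\neq i$ with $A_{ij}$ odd, i.e.\ row $i$ of $\overline{A}$ nonzero. Your suspicion that the statement can fail is correct only when this hypothesis fails. If every off-diagonal entry in row $i$ is even, then $s_i$ acts trivially on $H_A$ and the conclusion can be false. For example, take $A=B=\begin{pmatrix}2&-2\\-2&2\end{pmatrix}$. Then $W$ acts trivially on $H_A$ in characteristic $2$, so $\alpha_1\mapsto\beta_1+\beta_2$, $\alpha_2\mapsto\beta_2$ is an isomorphism not of the claimed form.

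So the paper's claim that such a $j$ exists ``since $A$ is reducible modulo $2$ and indecomposable'' needs indecomposability to be read for the reduction $\overline{A}$, as in the paper's later notion of indecomposable modulo $p$. The right repair is that hypothesis, not excluding $p=2$.
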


\begin{proof}
    For $p \neq 2$ the proof is the same as that of Proposition \ref{prop:isomorphscalar}. For $p = 2$ this argument fails as the $-1$-eigenspace is also the $1$-eigenspace which is not in general one-dimensional. Instead we use the following argument:

    Fix $i$ and choose some $j$, which must exist since $A$ is reducible modulo $2$ and indecomposable, such that \begin{equation*}s_i \cdot \alpha_j \neq \alpha_j\end{equation*} 

    Then \begin{equation*} s_i \cdot \alpha_j = \alpha_i + \alpha_j\end{equation*}

    so \begin{equation*} \varphi(s_i \cdot \alpha_j) = \varphi(\alpha_i) + \varphi(\alpha_j)\end{equation*}

    Now $s_i \cdot \varphi(\alpha_j)$ must be equal to $\varphi(\alpha_j)$ plus some (possibly zero) multiple of $\beta_i$. Hence $\varphi(\alpha_i)$ is a scalar multiple of $\beta_i$ as required.     
\end{proof}

\begin{lemma} \label{lem:scalardefmodp} Suppose $i \neq j$ and $p$ does not divide $A_{ij}$. Then $\lambda_i = \frac{B_{ij}}{A_{ij}} \lambda_j$ (up to reduction of $A_{ij}, B_{ij} \mod p$). 
\end{lemma}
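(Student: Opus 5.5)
The plan mirrors the proof of Lemma \ref{lem:scalardef}, working in $H_B$ over $k$ and using Proposition \ref{prop:isomorphscalarmodp} to write $\varphi(\alpha_l) = \lambda_l \beta_l$ for every $l$, with each $\lambda_l \in k$ nonzero since $\varphi$ is injective. The action of $s_i$ on $H_A$ and $H_B$ over $k$ is given by $s_i \cdot \alpha_j = \alpha_j - \overline{A}_{ij}\alpha_i$ and $s_i \cdot \beta_j = \beta_j - \overline{B}_{ij}\beta_i$, where bars denote images in $\mathbb{F}_p \subseteq k$. This holds because $H_k = H \otimes k$ and the $\mathbb{Z}$-linear action is simply tensored up.

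The key computation is to evaluate both sides of the equivariance condition $s_i \cdot \varphi(\alpha_j) = \varphi(s_i \cdot \alpha_j)$. The left side is $\lambda_j \beta_j - \overline{B}_{ij}\lambda_j \beta_i$. The right side is $\lambda_j \beta_j - \overline{A}_{ij}\lambda_i \beta_i$. Since $i \neq j$, the vectors $\beta_i, \beta_j$ are linearly independent, so comparing $\beta_i$-coefficients gives $\overline{A}_{ij}\lambda_i = \overline{B}_{ij}\lambda_j$ in $k$. Because $p \nmid A_{ij}$, the element $\overline{A}_{ij}$ is invertible in $\mathbb{F}_p$, and dividing gives $\lambda_i = \frac{\overline{B}_{ij}}{\overline{A}_{ij}}\lambda_j$. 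This is exactly the statement, with the quotient read after reduction mod $p$.

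The only place any care is needed is the hypothesis: we must divide by $\overline{A}_{ij}$, which is why the condition is $p \nmid A_{ij}$ rather than $A_{ij} \neq 0$. I would also add a remark for later use in the analogue of Theorem \ref{thm:isomorphcond}. Since $\lambda_i, \lambda_j \neq 0$, the identity forces $\overline{B}_{ij} \neq 0$, so $p \nmid B_{ij}$ as well. Hence the reduced Dynkin diagrams of $A$ and $B$ have the same edges, and the path-based well-definedness argument of Chapter \ref{chap:5} can be repeated. No step here is a real obstacle: the substantive work (the case $p=2$) was already done in Proposition \ref{prop:isomorphscalarmodp}.
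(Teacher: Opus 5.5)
Your proposal is correct and matches the paper's proof, which simply repeats the argument of Lemma \ref{lem:scalardef}. That argument equates the $\beta_i$-coefficients in $s_i\cdot\varphi(\alpha_j)=\varphi(s_i\cdot\alpha_j)$ and then divides by $\overline{A}_{ij}$, which is invertible precisely because $p\nmid A_{ij}$. Your closing remark that $p\nmid B_{ij}$ then follows is what the paper records immediately afterwards as Corollary \ref{cor:divisiblecond}.
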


\begin{proof}
    The proof is the same as in Lemma \ref{lem:scalardef}.
\end{proof}

Note that we do need the stronger condition that $p$ does not divide $A_{ij}$, so that we do not divide by zero modulo $p$. Since we know that the $\lambda_i$ are non-zero, this means that if $p$ does not divide $A_{ij}$ it cannot divide $B_{ij}$ either. And by considering $\varphi^{-1}$, this means that:

\begin{corollary}
\label{cor:divisiblecond}
    Suppose $H_A$ and $H_B$ are isomorphic. Then for each $i, j$, $p$ divides $A_{ij}$ if and only if $p$ divides $B_{ij}$.
\end{corollary}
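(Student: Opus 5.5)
We prove Corollary \ref{cor:divisiblecond} directly from Proposition \ref{prop:isomorphscalarmodp} and Lemma \ref{lem:scalardefmodp}, applying each to both $\varphi$ and $\varphi^{-1}$. Throughout, $\varphi: H_A \to H_B$ is an isomorphism. By Proposition \ref{prop:isomorphscalarmodp} we have $\varphi(\alpha_i) = \lambda_i \beta_i$ for scalars $\lambda_i \in k$. Each $\lambda_i$ is non-zero, since $\varphi$ is injective and $\alpha_i \neq 0$.

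First, suppose $i \neq j$ and $p \nmid A_{ij}$. Rather than quoting the divided form of Lemma \ref{lem:scalardefmodp}, we use the undivided identity from the proof of Lemma \ref{lem:scalardef}. Comparing the $\beta_i$-coefficients of $s_i \cdot \varphi(\alpha_j)$ and $\varphi(s_i \cdot \alpha_j)$ gives
\begin{equation*}
\lambda_i \overline{A_{ij}} = \lambda_j \overline{B_{ij}} \quad \text{in } k.
\end{equation*}
This identity holds for all $i, j$ with no divisibility hypothesis, because it only uses that $\varphi(\alpha_i) = \lambda_i\beta_i$. If $\overline{A_{ij}} \neq 0$ in $k$, the left-hand side is a product of non-zero elements of a field and so is non-zero. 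Since $\lambda_j \neq 0$, it follows that $\overline{B_{ij}} \neq 0$, i.e. $p \nmid B_{ij}$. This gives one direction: if $p \nmid A_{ij}$ then $p \nmid B_{ij}$.

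For the converse, we apply the same argument to $\varphi^{-1}: H_B \to H_A$, which is also an isomorphism of $W$-representations. The hypotheses are symmetric in $A$ and $B$: both are reducible modulo $p$, indecomposable, and have the same Weyl group. Proposition \ref{prop:isomorphscalarmodp} applied to $\varphi^{-1}$ is consistent with $\varphi^{-1}(\beta_i) = \lambda_i^{-1}\alpha_i$. Repeating the coefficient comparison shows that $p \nmid B_{ij}$ implies $p \nmid A_{ij}$. Taking contrapositives of both directions yields $p \mid A_{ij} \iff p \mid B_{ij}$ for $i \neq j$.

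The case $i = j$ is trivial, since $A_{ii} = B_{ii} = 2$.

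The main point needing care is the case $p = 2$. There Proposition \ref{prop:isomorphscalarmodp} relies on a different argument, which needs $A$ reducible modulo $2$ and indecomposable so that a suitable $j$ exists. We must confirm that this argument applies equally to $B$ when we run it for $\varphi^{-1}$, and it does, since $B$ satisfies the same standing hypotheses. Once the scalars $\lambda_i$ exist and are non-zero, the rest is the one-line coefficient comparison above, which works uniformly in every characteristic.
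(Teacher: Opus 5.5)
Your proof is correct and is the paper's argument: from $\lambda_i \overline{A_{ij}} = \lambda_j \overline{B_{ij}}$ in $k$ with every $\lambda_i \neq 0$ you get $p \nmid A_{ij} \Rightarrow p \nmid B_{ij}$, and applying the same reasoning to $\varphi^{-1}$ gives the converse. You do not need to re-run Proposition \ref{prop:isomorphscalarmodp} for $B$ in the $p=2$ case: $\varphi^{-1}(\beta_i) = \lambda_i^{-1}\alpha_i$ follows directly by inverting $\varphi$, which is why the paper can later remark that only $A$ needs to be reducible modulo $p$.
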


\begin{corollary}
    Suppose $H_A$ and $H_B$ are isomorphic. Then if $A$ is reducible modulo $p$, so is $B$. 
\end{corollary}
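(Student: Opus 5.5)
The plan is to deduce this directly from Corollary \ref{cor:divisiblecond}, applied to the isomorphism $\varphi: H_A \to H_B$. Suppose $A$ is reducible modulo $p$. We must show that whenever $p$ divides $B_{ij}$, $p$ also divides $B_{ji}$. Fix $i \neq j$ with $p \mid B_{ij}$; the diagonal case is trivial, since $B_{ii} = 2$.

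By Corollary \ref{cor:divisiblecond}, applied to the isomorphic pair $H_A, H_B$, $p \mid B_{ij}$ implies $p \mid A_{ij}$. Reducibility of $A$ modulo $p$ then gives $p \mid A_{ji}$. A second application of Corollary \ref{cor:divisiblecond}, now to the pair of indices $(j,i)$, gives $p \mid B_{ji}$, which is exactly the condition for $B$ to be reducible modulo $p$.

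The one point needing care is that Corollary \ref{cor:divisiblecond} was derived in a setting where both $A$ and $B$ were assumed reducible modulo $p$. I would check that its proof uses only the following ingredients. First, Proposition \ref{prop:isomorphscalarmodp}, which gives $\varphi(\alpha_i) = \lambda_i \beta_i$. Second, the coefficient comparison $\lambda_i A_{ij} = \lambda_j B_{ij}$ in $k$ from Lemma \ref{lem:scalardefmodp}, together with $\lambda_i, \lambda_j \neq 0$. Third, the symmetric argument for $\varphi^{-1}$.

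The coefficient comparison is pure linear algebra and needs no reducibility hypothesis. However, for $p = 2$ the proof of Proposition \ref{prop:isomorphscalarmodp} used reducibility and indecomposability of the \emph{source} matrix to find $j$ with $s_i \cdot \alpha_j \neq \alpha_j$. For $\varphi$ the source is $A$, which is reducible by assumption, so $\varphi(\alpha_i) = \lambda_i \beta_i$ holds.

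To obtain the reverse implication ($p \mid B_{ij} \Rightarrow p \mid A_{ij}$) without invoking $\varphi^{-1}$, I would argue directly from the relation $\lambda_i A_{ij} = \lambda_j B_{ij}$. Since $\lambda_i, \lambda_j \neq 0$, the entry $A_{ij}$ vanishes in $k$ if and only if $B_{ij}$ does. This gives both directions at once and sidesteps the potential circularity, which I expect to be the only real obstacle.
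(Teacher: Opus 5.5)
Your deduction is the same as the paper's: the corollary follows from Corollary \ref{cor:divisiblecond} together with reducibility of $A$. Your remark that the single relation $\lambda_i A_{ij} = \lambda_j B_{ij}$ (with $\lambda_i, \lambda_j \neq 0$) gives both directions of $p \mid A_{ij} \Leftrightarrow p \mid B_{ij}$, with no appeal to $\varphi^{-1}$, is a correct tidying of the paper's argument. For odd $p$ your proof is complete, because the $-1$-eigenspace of $s_i$ on $H_B$ is $k\beta_i$ with no hypothesis on $A$ or $B$.

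The gap is at $p = 2$. There you accept the paper's claim that reducibility and indecomposability of $A$ supply, for each $i$, some $j$ with $A_{ij}$ odd. They do not. Indecomposability is over $\mathbb{Z}$, so a row of $A$ may have only even off-diagonal entries; then $s_i$ acts trivially on $H_A$ and $\varphi(\alpha_i)$ need not be a multiple of $\beta_i$. In fact the statement itself then fails (reading it, as the paper's remark after it intends, with only $A$ assumed reducible). Take
\[
A = \begin{pmatrix} 2 & -2 & 0 \\ -2 & 2 & -1 \\ 0 & -1 & 2 \end{pmatrix}, \qquad
B = \begin{pmatrix} 2 & -2 & 0 \\ -3 & 2 & -1 \\ 0 & -1 & 2 \end{pmatrix}.
\]
Both have the Weyl group with $m_{12} = \infty$, $m_{23} = 3$, $m_{13} = 2$. $A$ is indecomposable and reducible modulo $2$, but $B$ is not, since $B_{12}$ is even and $B_{21}$ is odd. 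Over a field of characteristic $2$, $s_1$ acts trivially on both representations. On $\langle \alpha_2, \alpha_3\rangle$ and $\langle \beta_2, \beta_3 \rangle$, $s_2$ and $s_3$ act by the same matrices. Moreover $s_2(\beta_1+\beta_3) = (\beta_1+\beta_2)+(\beta_3+\beta_2) = \beta_1+\beta_3$ and $s_3(\beta_1+\beta_3) = \beta_1+\beta_3$. Hence $\varphi(\alpha_1) = \beta_1+\beta_3$, $\varphi(\alpha_2) = \beta_2$, $\varphi(\alpha_3) = \beta_3$ is an isomorphism $H_A \to H_B$.

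So Proposition \ref{prop:isomorphscalarmodp} and Corollary \ref{cor:divisiblecond} both fail here, and no argument can prove the statement for $p=2$ as posed; the gap is inherited from the paper. What is missing is a hypothesis ensuring that every row of $A$ has an odd off-diagonal entry. One example is $A$ indecomposable modulo $2$ in the sense defined later in that chapter, which together with reducibility guarantees this. Under such a hypothesis your argument goes through verbatim.
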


Note that this still holds in the case $p = 2$, because we assume reducibility of $A$ but not $B$ to obtain Proposition \ref{prop:isomorphscalarmodp}.

\begin{theorem}Let $A, B$ be generalised Cartan matrices with $A$ reducible modulo $p$ and $H_A, H_B$ be the respective Cartan representations over a field of characteristic $p$. Then $H_A$ and $H_B$ are isomorphic if and only if \begin{equation*}A_{i_1i_2} \ldots A_{i_{k} i_1} \equiv B_{i_1i_2} \ldots B_{i_ki_1} \mod p \text{ for all } i_1, \ldots , i_k \in \left\{1, \ldots, n\right\}.\end{equation*}  
\end{theorem}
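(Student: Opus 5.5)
The plan is to imitate the proof of Theorem \ref{thm:isomorphcond}, replacing Proposition \ref{prop:isomorphscalar} and Lemma \ref{lem:scalardef} by their characteristic $p$ versions, Proposition \ref{prop:isomorphscalarmodp} and Lemma \ref{lem:scalardefmodp}. The key change is the graph used for paths. Instead of the Dynkin diagram of $A$, I will use the graph $\Delta_p(A)$ with an edge between $i$ and $j$ exactly when $p \nmid A_{ij}$. Since $A$ is reducible modulo $p$, this is the same as $p \nmid A_{ji}$, so the graph is undirected. I will assume $\Delta_p(A)$ is connected, the analogue of indecomposability for $\overline{A}$. As in Chapter \ref{chap:5}, every cyclic product reduces to products of circuit products in this graph and short products $A_{ij}A_{ji}$. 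Any cyclic product passing through a non-edge of $\Delta_p(A)$ is $\equiv 0 \bmod p$ for $A$.

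\emph{Forward direction.} Suppose $\varphi: H_A \to H_B$ is an isomorphism. By Proposition \ref{prop:isomorphscalarmodp}, $\varphi(\alpha_i) = \lambda_i\beta_i$ with every $\lambda_i \neq 0$ in $k$. By Corollary \ref{cor:divisiblecond}, $p \mid A_{ij}$ if and only if $p \mid B_{ij}$. Hence any cyclic product using a non-edge vanishes mod $p$ for both matrices, and the congruence holds trivially there. For a circuit $i_1,\ldots,i_k,i_1$ in $\Delta_p(A)$, Lemma \ref{lem:scalardefmodp} gives the relation $\lambda_{i_j}A_{i_ji_{j+1}} = \lambda_{i_{j+1}}B_{i_ji_{j+1}}$ in $k$. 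Multiplying these relations around the circuit and cancelling the nonzero product $\prod\lambda_{i_j}$ yields the congruence.

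\emph{Converse.} First, the hypothesis with $k=2$ gives $A_{ij}A_{ji}\equiv B_{ij}B_{ji}$. Together with reducibility of $A$, this shows $p\mid B_{ij}$ if and only if $p\mid A_{ij}$, so both matrices share the graph $\Delta_p(A)$.

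Next I construct the scalars. Fix a vertex $i$ with $\lambda_i=1$ and define $\lambda$ along paths in $\Delta_p(A)$ by $\lambda_{k_{l+1}} = \overline{B_{k_{l+1}k_l}}\,\overline{A_{k_{l+1}k_l}}^{-1}\lambda_{k_l}$. This is legitimate because every inverse taken is of a unit mod $p$. Well-definedness follows exactly as in Theorem \ref{thm:isomorphcond}: combine the congruence for the back-and-forth "circuit" along one path with the congruence for the closed circuit formed by two paths. Here all the quantities we divide by are units in $\mathbb{F}_p$, so the manipulations are valid in $k$.

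Finally I check that $\varphi(\alpha_i)=\lambda_i\beta_i$ is equivariant. The relation $s_i\cdot\varphi(\alpha_j)=\varphi(s_i\cdot\alpha_j)$ reduces to $\lambda_iA_{ij}=\lambda_jB_{ij}$ in $k$. When $p\mid A_{ij}$, both sides are $0$ since then $p\mid B_{ij}$. Otherwise the relation holds by construction along the edge, just as in the last paragraph of Theorem \ref{thm:isomorphcond}. The map $\varphi$ is bijective since every $\lambda_i\neq0$.

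\emph{Main obstacle.} The delicate point is connectivity of $\Delta_p(A)$. If $\overline{A}$ decomposes even though $A$ is indecomposable, the $\lambda$'s on different components are unlinked. However, the congruence conditions then impose nothing across components, and the relations $\lambda_iA_{ij}=\lambda_jB_{ij}$ are trivial there. So I would simply run the construction separately on each component of $\Delta_p(A)$, choosing $\lambda=1$ at one vertex in each. The Weyl group action still factors correctly, since $s_i$ fixes $\beta_j$ mod $p$ whenever $p\mid B_{ij}$.
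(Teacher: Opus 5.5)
\textbf{Gap in the converse: the divisibility pattern of $B$.} The problem is your claim that the $k=2$ congruences together with reducibility of $A$ give $p\mid B_{ij}$ if and only if $p\mid A_{ij}$.

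Only one direction follows. If $p\nmid A_{ij}$, then $p\nmid A_{ji}$ by reducibility, so $B_{ij}B_{ji}\equiv A_{ij}A_{ji}\not\equiv 0$ and hence $p\nmid B_{ij}$. If instead $p\mid A_{ij}$, the congruence $B_{ij}B_{ji}\equiv 0$ only says that $p$ divides \emph{one} of $B_{ij},B_{ji}$. Since $B$ is not assumed reducible modulo $p$, nothing forces it to be $B_{ij}$.

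Two parts of your argument rest on the false claim: your final equivariance check (``when $p\mid A_{ij}$, both sides are $0$''), and your component-by-component treatment of a disconnected $\Delta_p(A)$.

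In the disconnected case it is not only the argument that fails but the statement itself. Take $p=3$, $A=\begin{pmatrix}2&-3\\-3&2\end{pmatrix}$ and $B=\begin{pmatrix}2&-1\\-6&2\end{pmatrix}$.
\begin{itemize}
\item Both matrices are indecomposable, and $A_{12}A_{21},B_{12}B_{21}\ge 4$, so they have the same Weyl group.
\item $A$ is reducible modulo $3$.
\item All cyclic products agree modulo $3$: they are $2,2,0,0$ for both.
\item Over $\mathbb{F}_3$, $s_1s_2$ acts as $-1$ on $H_A$, whereas $s_1s_2\cdot\beta_2=-\beta_1-\beta_2$ in $H_B$. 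So $H_A\not\cong H_B$.
\end{itemize}
So choosing $\lambda=1$ separately on each component cannot work under the stated hypotheses.

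\textbf{How to repair it.} You need to state one of the extra hypotheses the paper relies on. The paper's proof is just ``as in Theorem \ref{thm:isomorphcond}'', i.e.\ your path construction. It tacitly assumes that every pair of vertices is joined by a path of entries nonzero modulo $p$, as the remark after the theorem admits.

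\emph{If $\Delta_p(A)$ is connected.} You can then recover the missing direction from a longer cycle. Suppose $p\mid A_{ij}$ but $p\nmid B_{ij}$, and take a simple path $j=k_1,\ldots,k_r=i$ in $\Delta_p(A)$. The cyclic product over $i,k_1,\ldots,k_{r-1}$ is $\equiv 0$ for $A$. For $B$ it is $B_{ij}\prod_l B_{k_lk_{l+1}}$, which is a unit, because each $B_{k_lk_{l+1}}$ is a unit by the direction you did prove. This is a contradiction.

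\emph{If $B$ is also reducible modulo $p$.} This is the standing assumption of the section. Your $k=2$ argument then becomes symmetric and correct. Your component-by-component construction is then valid, and it actually covers the disconnected case that the paper's path argument silently excludes.

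\textbf{What is fine.} The forward direction and the construction on a connected component match the paper. There is one caveat at $p=2$: Proposition \ref{prop:isomorphscalarmodp} needs each vertex to have a neighbour in $\Delta_2(A)$. At an isolated vertex $s_i$ acts trivially, so that proposition fails there, although the cyclic products through such a vertex vanish for both matrices.
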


\begin{proof}
As in Theorem \ref{thm:isomorphcond}.
\end{proof}

If we relax the assumption of reducibility$\mod p$, then this condition is still necessary for an isomorphism to exist if $p \neq 2$. But the argument that it is sufficient will fail, as the proof uses the assumption that for any $i, j$ there is a sequence $i = k_1, \ldots, k_r = j$ with $A_{k_{i+1} k_i}$ non-zero for each $i$, and also that $p$ divides $A_{ij}$ if and only if $p$ divides $A_{ji}$. 

It turns out that the second of these assumptions is not needed, and so we can generalise the full result to a larger class of GCMs. 

Suppose $A$ is a GCM which is not necessarily reducible modulo $p$. Then we can define another notion of the Dynkin diagram modulo $p$: the directed graph with vertices $\left\{1, \ldots, n\right\}$ and an edge from $i$ to $j$ labelled by $-A_{ji} \mod p$ wherever $p$ does not divide $A_{ji}$.

\begin{definition}We say that $A$ is indecomposable modulo $p$ if for every $i, j \in \left\{1, \ldots, n\right\}$ there exists a path from $i$ to $j$ in this Dynkin diagram. That is, a sequence $i = k_1, \ldots, k_r = j$ with $A_{k_{i+1} k_i}$ not divisible by $p$ for each $i$.\end{definition}

Note that unlike the characteristic zero case, it is not true that every generalised Cartan matrix can be decomposed into indecomposable GCMs (see the example below, for instance). 

\begin{theorem}Let $A, B$ be generalised Cartan matrices with $A$ indecomposable modulo $p$. Then $H_A, H_B$ are isomorphic if and only if \begin{equation*}A_{i_1i_2} \ldots A_{i_{k} i_1} \equiv B_{i_1i_2} \ldots B_{i_ki_1} \mod p \text{ for all } i_1, \ldots , i_k \in \left\{1, \ldots, n\right\} .\end{equation*}\end{theorem}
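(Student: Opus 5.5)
The plan is to follow the proof of Theorem \ref{thm:isomorphcond}, with two changes. First, paths are taken in the directed Dynkin diagram modulo $p$. Second, the reversed-path trick is replaced by closing up paths into closed walks. Reversing a path is no longer available, because $A_{ij}\not\equiv 0$ need not imply $A_{ji}\not\equiv 0 \pmod p$.

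\emph{Step 1 (scalars).} I would first show that any isomorphism $\varphi: H_A\to H_B$ satisfies $\varphi(\alpha_i)=\lambda_i\beta_i$ with $\lambda_i\neq 0$.
\begin{itemize}
\item For $p\neq 2$ this is the eigenspace argument of Proposition \ref{prop:isomorphscalar}: the $-1$-eigenspace of $s_i$ on $H_B$ is spanned by $\beta_i$.
\item For $p=2$ I would use the argument of Proposition \ref{prop:isomorphscalarmodp}. It needs some $j$ with $A_{ij}\not\equiv 0$. Indecomposability modulo $p$ supplies this when $n\ge 2$: take a path from any other vertex to $i$; its last step $k_{r-1}\to i$ has $A_{i k_{r-1}}\not\equiv 0$.
\end{itemize}

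\emph{Step 2 (the basic relation).} Comparing coefficients of $\beta_i$ as in Lemma \ref{lem:scalardef} gives $A_{ij}\lambda_i = B_{ij}\lambda_j$ in $k$ for all $i\neq j$, with no division needed. Since the $\lambda$'s are nonzero, $A_{ij}\equiv 0$ if and only if $B_{ij}\equiv 0$. This gives necessity directly. If some factor of a cyclic product of $A$ vanishes mod $p$, the corresponding factor for $B$ also vanishes, so both products are $0$. Otherwise, multiplying the relations around the circuit and cancelling $\prod\lambda_{i_l}$ gives equality of the products. Conversely, the relation for all $i,j$ is exactly equivariance of $\varphi$ on generators, so it suffices to construct such nonzero $\lambda_i$.

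\emph{Step 3 (sufficiency).} Assume the cyclic products agree mod $p$.
\begin{enumerate}
\item First show that $A_{ij}\not\equiv 0$ implies $B_{ij}\not\equiv 0$. Suppose $A_{ij}\not\equiv 0$, so there is an edge $j\to i$. Take a shortest path from $i$ to $j$; it is simple, so adding the edge $j\to i$ gives a simple circuit with nonzero $A$-cyclic product. The $B$-product then equals this nonzero value, so $B_{ij}\not\equiv 0$. The converse implication uses the same argument applied to the $A$-side.
\item Fix a base vertex $i_0$ with $\lambda_{i_0}=1$. Along a directed path, set $\lambda_{k_{l+1}}=\frac{B_{k_{l+1}k_l}}{A_{k_{l+1}k_l}}\lambda_{k_l}$; all these values are nonzero.
\item For well-definedness, let $P,Q$ be two paths from $i_0$ to $j$, and choose a path $R$ from $j$ back to $i_0$ (indecomposability). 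Then $PR$ and $QR$ are closed walks along which every $A$-entry is nonzero mod $p$. I would prove a small lemma: for any closed walk, the product of $A$-entries along it equals the product of $B$-entries. The proof is by repeatedly splitting at a repeated vertex, which factors the walk's product into cyclic products over distinct indices. Hence the $B/A$ ratio along $PR$ is $1$, and likewise along $QR$, so the ratios along $P$ and $Q$ coincide.
\item To verify the relation, take $i,j$. If $A_{ij}\not\equiv 0$, extend a path to $j$ by the edge $j\to i$, which gives $A_{ij}\lambda_i=B_{ij}\lambda_j$. If $A_{ij}\equiv 0$, then $B_{ij}\equiv 0$ by step 1, and the relation holds trivially.
\end{enumerate}

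I expect the main obstacle to be step 3. It requires replacing the two-directional path arguments used in characteristic $0$ by closed walks, carefully justifying the walk-to-simple-cycle factorisation, and getting the zero/nonzero correspondence from simple circuits alone.
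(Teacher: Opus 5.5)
Your proposal is correct and follows essentially the same route as the paper: define the $\lambda_j$ along directed paths in the Dynkin diagram modulo $p$ from a base vertex, then check well-definedness by closing two paths $P,Q$ with a return path $R$ and applying the cyclic product condition to $PR$ and $QR$. Your extra steps tighten points the paper's proof passes over: you factor closed walks into simple circuits, and you derive $A_{ij}\not\equiv 0 \iff B_{ij}\not\equiv 0 \pmod p$ from the hypothesis itself rather than from Corollary~\ref{cor:divisiblecond}, which presupposes an isomorphism.
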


\begin{proof}
    We've showed the necessity of this condition already. Conversely, note that by \ref{cor:divisiblecond} $B$ is also indecomposable modulo $p$, and that the argument in \ref{lem:scalardefmodp} for $p = 2$ is still valid with this weaker assumption. Choose some $i$ and define $\lambda_i = 1$. Then, as before, given $j$ choose a path $i = k_1, \ldots, k_r = j$ from $i$ to $j$ and define $\lambda_{k_i}$ along this path, so that 

    \begin{equation*}\lambda_j = \dfrac{B_{k_2 k_1} \ldots B_{k_r k_{r-1}}}{A_{k_2k_1} \ldots A_{k_r k_{r-1}}} \lambda_i\end{equation*}

    To check that the $\lambda_j$ are well-defined, suppose we have another path $i = l_1, \ldots, l_s = j$ from $i$ to $j$. Then we want to show that \begin{equation*}\dfrac{B_{k_2 k_1} \ldots B_{k_r k_{r-1}}}{A_{k_2k_1} \ldots A_{k_r k_{r-1}}} \equiv \dfrac{B_{l_2 l_1} \ldots B_{l_r l_{r-1}}}{A_{l_2l_1} \ldots A_{l_r l_{r-1}}} \mod p\end{equation*}

    But we know that there exists a path $j = m_1, \ldots m_t = i$ from $i$ to $j$. Now $i = k_1, \ldots, k_r = j = m_1, \ldots, m_t = i$ forms a cycle in the Dynkin diagram, so we can apply the cyclic product condition:

    \begin{equation*}A_{k_2k_1} ... A_{k_r k_{r-1}} A_{m_2m_1} ... A_{m_t m_{t-1}} \equiv B_{k_2k_1} ... B_{k_r k_{r-1}} B_{m_2m_1} ... B_{m_t m_{t-1}} \mod p\end{equation*}

    Rearranging this gives \begin{equation*}\dfrac{B_{k_2k_1} ... B_{k_r k_{r-1}}}{A_{k_2k_1} ... A_{k_r k_{r-1}}} \equiv \dfrac{A_{m_2m_1} ... A_{m_t m_{t-1}}}{B_{m_2m_1} ... B_{m_t m_{t-1}}} \mod p \end{equation*}

    And similarly \begin{equation*}\dfrac{B_{l_2l_1} ... B_{l_s l_{s-1}}}{A_{l_2l_1} ... A_{l_s l_{s-1}}} \equiv \dfrac{A_{m_2m_1} ... A_{m_t m_{t-1}}}{B_{m_2m_1} ... B_{m_t m_{t-1}}} \mod p \end{equation*}

    which gives the required equivalence. So $H_A$ and $H_B$ are isomorphic. 
\end{proof}
This argument also works in the cases of Theorem \ref{thm:isomorphcond}. 

\begin{definition}Analogous to the characteristic $0$ case, we'll define the cyclic products modulo $p$ of an $n \times n$ GCM $A$ to be the quantities
\begin{equation*}A_{i_1i_2} \ldots A_{i_ki_1} \mod p\end{equation*} for $i_1, \ldots, i_k \in \left\{1, \ldots, n\right\}$ distinct.

We'll also say that $A, B$ are equivalent modulo $p$ if there exists a diagonal matrix $D$ with integer entries not divisible by $p$ such that $A_{ij} \equiv (DBD^{-1})_{ij} \mod p$ for all $i, j$ (note that the condition for the entries of $D$ to be positive loses its meaning modulo $p$).
\end{definition}

\begin{proposition}$A$ and $B$ are equivalent modulo $p$ if and only if they have the same cyclic products modulo $p$ and $p$ divides $A_{ij}$ precisely when $p$ divides $B_{ij}$.\end{proposition}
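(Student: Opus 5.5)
The plan is to mirror the proof of Proposition \ref{prop:equivcond}, working in $\mathbb{F}_p$ and using the directed graph on $\{1,\ldots,n\}$ with an edge $i \to j$ whenever $p \nmid A_{ji}$. The notion of cyclic product modulo $p$ has to be read together with the divisibility hypothesis. Where some factor is divisible by $p$, both cyclic products vanish mod $p$, so the substantive content comes only from cycles all of whose entries are units mod $p$.

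For the forward direction, suppose $A_{ij} \equiv d_i B_{ij} d_j^{-1} \pmod p$ with each $d_i$ a unit mod $p$. Then $p \mid A_{ij}$ if and only if $p \mid B_{ij}$, since $d_i d_j^{-1}$ is a unit. For any cyclic product the factors $d_{i_1}d_{i_2}^{-1}\cdots d_{i_k}d_{i_1}^{-1}$ telescope to $1$, exactly as in Proposition \ref{prop:equivcond}. So the cyclic products agree mod $p$.

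For the converse, assume the cyclic products agree mod $p$ and that $A$ and $B$ have the same entries divisible by $p$. For each pair with $p \nmid A_{ij}$, set $c_{ij} = A_{ij}B_{ij}^{-1} \in \mathbb{F}_p^\times$. Since a cycle of unit entries has nonzero products, equality of the cyclic products gives $c_{i_1i_2}\cdots c_{i_ki_1} = 1$ along every such cycle. We want units $d_i$ with $c_{ij} = d_i d_j^{-1}$. Work on each connected component of the undirected graph whose edges are the pairs $\{i,j\}$ with $p \nmid A_{ij}$ or $p \nmid A_{ji}$. On a component, fix a root with $d=1$ and propagate along edges: use $d_i = c_{ij} d_j$ whenever $A_{ij}$ is a unit, or $d_j = c_{ji} d_i$ whenever $A_{ji}$ is a unit. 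Across different components choose the roots independently, and finally lift each $d_i$ to an integer not divisible by $p$. Entries with $p \mid A_{ij}$ are automatically fine, since then $p \mid B_{ij}$ and both sides of $A_{ij} \equiv d_iB_{ij}d_j^{-1}$ vanish.

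The main obstacle is well-definedness. Unlike the characteristic $0$ case, $A$ need not be reducible mod $p$, so an edge may be usable in one direction only. An undirected cycle of such edges then does not directly correspond to a cyclic product of $A$. Consider a cycle in which some steps use $A_{ij}$ and others use $A_{ji}$ (the reverse orientation). There, consistency requires the product of the $c$'s along the forward steps to equal the product of the $c$'s along the reverse steps, which is not literally a single cyclic product.

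I would first try to handle this when every edge used is a unit in the forward direction, i.e.\ when $A$ is indecomposable modulo $p$. In that case, as in the last theorem, I would close up paths with a return path $m$ and compare both routes through the product along $m$; this argument works verbatim. For the general mixed case I would attempt to show that a mixed cycle can be split into directed cycles using $2$-cycles $A_{ij}A_{ji}$ where both entries are units. If that fails, I would suspect the proposition needs the indecomposability modulo $p$ hypothesis, and I would state the proof under that assumption.
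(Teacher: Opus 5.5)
Your forward direction and the path-propagation set-up for the converse are the paper's argument; its proof is just ``as in Proposition~\ref{prop:equivcond}''. You have also found the real obstruction. But the branch you leave open cannot be closed, because in the general mixed case the proposition is false. The paper's own example with $p=3$, given just after this proposition, shows this:
\[
A=\begin{pmatrix}2&-3&-3\\-1&2&-3\\-1&-1&2\end{pmatrix},\qquad B=\begin{pmatrix}2&-3&-3\\-1&2&-3\\-2&-1&2\end{pmatrix}.
\]
\begin{itemize}
\item In both matrices the off-diagonal entries prime to $3$ are exactly the strictly lower-triangular ones, so the divisibility patterns agree.
\item There are therefore no directed cycles of units, and every cyclic product on two or more indices is $0 \bmod 3$ for both matrices.
\item Yet equivalence would force $d_2\equiv d_1$ from position $(2,1)$, $d_3\equiv d_2$ from $(3,2)$, and $d_3\equiv 2d_1$ from $(3,1)$. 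Together these give $1\equiv 2 \pmod 3$.
\end{itemize}
Your plan to split a mixed cycle using $2$-cycles $A_{ij}A_{ji}$ needs the transposed entry to be a unit at every reversed step. Here no such entry is a unit, and the conclusion genuinely fails. So the hypothesis you list as a fallback is not optional: some hypothesis of this kind is necessary.

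With a hypothesis added, your proof is complete and matches the paper's.
\begin{itemize}
\item The paper's one-line proof implicitly uses the section's standing assumption that the matrices are reducible modulo $p$. Then every unit $A_{ij}$ has $A_{ji}$ a unit, and the $2$-cycle products give $c_{ij}c_{ji}=1$. Your splitting idea then converts every undirected cycle into directed ones, and the argument of Proposition~\ref{prop:equivcond} runs verbatim on each component.
\item Under indecomposability modulo $p$, your return-path argument works. Note that the closed directed walks it produces must be decomposed into simple directed cycles, since the hypothesis only covers distinct indices.
\end{itemize}
Both cases are covered by requiring that each weakly connected component of the digraph of unit entries be strongly connected. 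You should state the proposition under such an assumption rather than leave the general case as something to attempt.
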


\begin{proof}
    As in \ref{prop:equivcond} (note that the second condition is immediate from equivalence modulo $p$, and that we need it for the converse argument to hold).
\end{proof}

However, attempts to weaken this condition further to allow matrices which are not indecomposable modulo $p$ will fail. For instance, let's suppose $p = 3$ and consider the below "directed Dynkin diagrams modulo $3$" corresponding to generalised Cartan matrices $A, B$ respectively: 

 \begin{figure}[h]
\centering
\begin{tikzcd}
& 1 \arrow[ddr, "1"] \arrow[ddl, "1"] & \\
&  & \\
2 \arrow[rr,  "1"] & & 3\\
\end{tikzcd}
\scalebox{1.25}{
$\begin{pmatrix}2 & -3 & -3\\-1 & 2 & -3\\-1 & -1 & 2\\\end{pmatrix}$
}
\end{figure}

 \begin{figure}[h]
\centering
\begin{tikzcd}
& 1 \arrow[ddr, "2"] \arrow[ddl, "1"] & \\
&  & \\
2 \arrow[rr,  "1"] & & 3\\
\end{tikzcd}
\scalebox{1.25}{
$\begin{pmatrix}2 & -3 & -3\\-1 & 2 & -3\\-2 & -1 & 2\\\end{pmatrix}$
}
\end{figure}

Neither of these contain any (directed) cycles, so their cyclic products are trivially equal. And the edges present in both are the same. However, we claim they are not isomorphic. Suppose $\varphi: H_A \to H_B$ is an isomorphism (working over the field $\mathbb{F}_3$ of characteristic $3$). Then $\varphi(\alpha_i) = \lambda_i \beta_i$ for $i = 1,2,3$. Without loss of generality $\lambda_1 = 1$.

Then $\lambda_2 = \frac{B_{21}}{A_{21}} \lambda_1 = 1$ and $\lambda_3 = \frac{B_{21}}{A_{21}} = 2$. But also, $\lambda_3 = \frac{B_{32}}{A_{32}} \lambda_2 = \lambda_2$, a contradiction. 

Let's also try to generalise the results of Chapter \ref{chap:6} to the case of finite characteristic. Proposition \ref{prop:isomorphiffform} does not depend on the characteristic of the field. But it is difficult to generalise the theory of Kac-Moody algebras that the initial approach relied on to finite characteristic. Instead, we'll generalise the alternative argument working directly with the dual representation.

\begin{definition}Let $A$ be an $n \times n$ generalised Cartan matrix and $p$ a prime. Then we say that $A$ is symmetrisable modulo $p$ if for all $i_1, \ldots, i_k \in \left\{1, \ldots, n\right\}$, \begin{equation*}A_{i_1i_2} \ldots A_{i_k i_1} \equiv A_{i_2i_1} \ldots A_{i_1 i_k} \mod p\end{equation*}\end{definition}

\begin{proposition}
    Suppose $A$ is indecomposable and symmetrisable modulo $p$. Then $A^T$ is indecomposable modulo $p$, and the Cartan representations we obtain from $A$ and $A^T$ over a field $k$ of characteristic $p$ are isomorphic. 
\end{proposition}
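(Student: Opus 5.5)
The plan is to reduce the statement to the last theorem on isomorphism of Cartan representations in characteristic $p$, the one with hypothesis ``$A$ indecomposable modulo $p$''. I take that theorem with $B = A^T$. I read ``indecomposable'' in the statement as indecomposable modulo $p$, since that is the hypothesis the theorem needs. Two things must then be checked: that the hypotheses of that theorem hold, which includes the claim that $A^T$ is indecomposable modulo $p$, and that the cyclic product condition holds.

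First, $A$ and $A^T$ have the same Weyl group $W$. This is because $(A^T)_{ij}(A^T)_{ji} = A_{ji}A_{ij}$, and by Theorem \ref{thm:weyliscoxeter} the Weyl group depends only on these products. So $H_A$ and $H_{A^T}$ are both representations of $W$ and it makes sense to compare them.

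Second, I check indecomposability of $A^T$ modulo $p$ using the directed Dynkin diagram modulo $p$. For $A$ this graph has an edge $i \to j$ exactly when $p \nmid A_{ji}$. For $A^T$ it has an edge $i \to j$ exactly when $p \nmid (A^T)_{ji} = A_{ij}$, which is the condition for an edge $j \to i$ in the graph of $A$. So the diagram of $A^T$ is the diagram of $A$ with every edge reversed. Suppose there is a directed path from $i$ to $j$ in every ordered pair for $A$. Given $i, j$, take a path from $j$ to $i$ in the diagram of $A$; reversing it gives a path from $i$ to $j$ in the diagram of $A^T$. Hence $A^T$ is indecomposable modulo $p$. (Alternatively, Corollary \ref{cor:divisiblecond} gives this after the isomorphism is established, but the direct argument is cleaner.)

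Third, I verify the cyclic product condition. For distinct $i_1, \ldots, i_k$,
\begin{equation*}(A^T)_{i_1i_2}(A^T)_{i_2i_3} \cdots (A^T)_{i_ki_1} = A_{i_2i_1}A_{i_3i_2}\cdots A_{i_1i_k}.\end{equation*}
By the definition of symmetrisability modulo $p$, this is congruent modulo $p$ to $A_{i_1i_2}\cdots A_{i_ki_1}$. So $A$ and $A^T$ have the same cyclic products modulo $p$, and the theorem yields $H_A \cong H_{A^T}$ over $k$.

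The only real obstacle is interpretive rather than computational. If ``indecomposable'' were meant in the characteristic-$0$ sense, it need not imply indecomposability modulo $p$, and the example preceding the definition shows the theorem genuinely fails without it. So I would state explicitly that the hypothesis is indecomposability modulo $p$ (or assume it in addition). I would also note that the $p = 2$ case of Proposition \ref{prop:isomorphscalarmodp} still goes through under this weaker hypothesis, as remarked in the proof of the theorem being invoked.
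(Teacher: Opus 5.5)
Your proposal is correct and follows the paper's own proof. The paper likewise notes that the directed Dynkin diagram modulo $p$ of $A^T$ is that of $A$ with all arrows reversed, and gets the isomorphism as the characteristic-$p$ analogue of Corollary \ref{cor:dualisomorphcond}, i.e.\ by applying the modulo-$p$ isomorphism criterion with $B = A^T$ and using symmetrisability modulo $p$ to match the cyclic products; your reading of ``indecomposable'' as ``indecomposable modulo $p$'' is also what the paper intends.
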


\begin{proof}
    The directed Dynkin diagram modulo $p$ for $A^T$ is obtained by reversing all arrows in the directed Dynkin diagram modulo $p$ for $A$, so $A$ is indecomposable modulo $p$ if and only if $A$ is. The second statement is the finite-characteristic case of \ref{cor:dualisomorphcond} and is proved in exactly the same way.
\end{proof}

\begin{theorem}Suppose $p \neq 2$. Let $A$ be a generalised Cartan matrix, $H_A$ the Cartan representation of the Weyl group $W$ over a field $k$ of characteristic $p$ and $H_A^*$ its dual. Then if $p$ divides $\det A$, $H_A^*$ is not isomorphic to any Cartan representation of $W$. If this is not the case and additionally $A$ is indecomposable modulo $p$, then $H_A^*$ is isomorphic to $H_{A^T}$.\end{theorem}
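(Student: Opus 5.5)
We follow the alternative argument of Chapter \ref{chap:6} (Theorems \ref{thm:det0notisomorph} and \ref{thm:dualtransposeisomorph}), now working over $k$ of characteristic $p \neq 2$. Let $a_1, \ldots, a_n$ be the basis of $H_A^*$ dual to $\alpha_1, \ldots, \alpha_n$, and let $S_i$ be the matrix of $s_i$ on $H_A^*$ in this basis. As computed in Chapter \ref{chap:6}, $S_i$ is the identity except that its $i$th column has $-1$ in position $i$ and $-A_{ij}$ in position $j \neq i$. These formulas only involve integer entries, so they hold verbatim over $k$.

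\emph{First part.} Suppose $H_A^* \cong H_B$ for some GCM $B$ with Weyl group $W$, via some $\psi: H_B \to H_A^*$. Since $p \neq 2$, we have $1 \neq -1$ in $k$. As in Proposition \ref{prop:isomorphscalar}, $\psi(\beta_i)$ is then a nonzero vector in the $-1$-eigenspace of $S_i$. This eigenspace is one-dimensional: $S_i$ fixes the $(n-1)$-dimensional space spanned by the $a_j$ with $j \neq i$, and $\det S_i = -1 \neq 1$. Solving $S_i \lambda = -\lambda$ gives $\lambda_j = \tfrac12 A_{ij}\lambda_i$; dividing by $2$ is legitimate precisely because $p \neq 2$. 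So the eigenspace is spanned by $e_i = \sum_j A_{ij} a_j$, and each $\psi(\beta_i)$ is a nonzero multiple of $e_i$. If $p \mid \det A$, the vectors $e_1, \ldots, e_n$ are linearly dependent over $k$, since their coordinate vectors are the rows of $A$ reduced mod $p$. Then the $\psi(\beta_i)$ cannot form a basis, which is a contradiction. The step needing the most care is this eigenspace argument, specifically checking that both uses of $p \neq 2$ (the distinctness of the eigenvalues $\pm 1$ and the division by $2$) are genuinely needed.

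\emph{Second part.} Suppose $p \nmid \det A$. Then $e_1, \ldots, e_n$ form a basis of $H_A^*$. A direct computation, as in Chapter \ref{chap:6}, gives $s_i \cdot e_j = e_j - A_{ji} e_i$. I will redo this calculation carefully, since the paper's displayed version contains a typo. The computation is pure integer arithmetic reduced mod $p$, so it is valid over $k$. Hence $e_i \mapsto \alpha_i^{T}$ defines an isomorphism $H_A^* \cong H_{A^T}$, where $\alpha_i^{T}$ are the simple roots for $A^T$; note that $A^T$ has the same Weyl group $W$.

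Strictly, this second part does not seem to need indecomposability modulo $p$ at all. The hypothesis is presumably included so that $H_{A^T}$ lies in the class where the isomorphism-type classification of the preceding theorem applies. I would note this, and use the hypothesis only to invoke that classification if one wishes to conclude that $H_A^*$ is isomorphic to $H_B$ exactly when the cyclic products of $B$ agree with those of $A^T$ modulo $p$.
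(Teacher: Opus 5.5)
Your proposal is correct and follows the paper's proof, which simply reruns the arguments of Theorems \ref{thm:det0notisomorph} and \ref{thm:dualtransposeisomorph} with the entries of $A$ reduced modulo $p$. Your side remark is also right: the computation $s_i \cdot e_j = e_j - A_{ji} e_i$, together with $p \nmid \det A$, already gives $H_A^* \cong H_{A^T}$ without using indecomposability modulo $p$.
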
 

\begin{proof}
    As in \ref{thm:det0notisomorph} and \ref{thm:dualtransposeisomorph}, except that instead of $A$ we use the matrix $\overline{A}$ with entries $A_{ij} \mod p$ which are taken to be in $k$.
\end{proof}

 Note that the argument fails in the case $p = 2$ as it relies on the $-1$-eigenspace being one-dimensional. In fact, in this case all eigenvalues of the matrix $S_i$ representing the action of $s_i$ in the dual basis are $1$. So there is no clear choice of which to use in an attempt to make a Cartan representation. We'll continue to assume $p \neq 2$ in the below. 

\begin{corollary}
    Let $A$ be a generalised Cartan matrix which is indecomposable modulo $p$. Then $H_A$ and $H_A^*$ are isomorphic if and only if $p$ does not divide $\det A$ and $A$ is symmetrisable modulo $p$.
\end{corollary}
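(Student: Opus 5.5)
The plan is to combine the preceding Theorem (on $H_A^*$ for $p \neq 2$) with the characteristic-$p$ isomorphism criterion for Cartan representations, exactly as Theorem \ref{thm:dualisomorphcond} was recovered from Theorems \ref{thm:det0notisomorph}, \ref{thm:dualtransposeisomorph} and Corollary \ref{cor:dualisomorphcond}. Throughout, $p \neq 2$ is assumed, as stated just before the corollary.

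For the forward direction, suppose $H_A \cong H_A^*$. Then $H_A^*$ is isomorphic to a Cartan representation of $W$, namely $H_A$. By the preceding Theorem, this forces $p \nmid \det A$. Since $A$ is indecomposable modulo $p$, the same Theorem gives $H_A^* \cong H_{A^T}$, and hence $H_A \cong H_{A^T}$. Next I apply the characteristic-$p$ isomorphism theorem for GCMs indecomposable modulo $p$, with $B = A^T$. That theorem states that $H_A \cong H_{A^T}$ if and only if
\begin{equation*}A_{i_1i_2}\cdots A_{i_ki_1} \equiv (A^T)_{i_1i_2}\cdots (A^T)_{i_ki_1} = A_{i_2i_1}\cdots A_{i_1i_k} \mod p\end{equation*}
for all $i_1,\ldots,i_k$. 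This congruence is precisely the definition of $A$ being symmetrisable modulo $p$.

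For the converse, suppose $p\nmid \det A$ and $A$ is symmetrisable modulo $p$. The preceding Theorem gives $H_A^* \cong H_{A^T}$. The cyclic-product congruence above says that $A$ and $A^T$ have the same cyclic products modulo $p$. Because $A$ is indecomposable modulo $p$, the isomorphism theorem then yields $H_A \cong H_{A^T}$. Composing the two isomorphisms gives $H_A \cong H_A^*$.

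The main obstacle is checking that the hypotheses of the cited results are really satisfied. The isomorphism theorem requires only $A$, not $B$, to be indecomposable modulo $p$, and that is given. There is one subtlety: the preceding Proposition takes "indecomposable and symmetrisable modulo $p$" as its hypothesis. I will avoid relying on that Proposition and instead invoke the isomorphism theorem directly with $B=A^T$, which needs only the mod-$p$ indecomposability of $A$. I also need that $A$ and $A^T$ define the same Weyl group, which holds because $A_{ij}A_{ji}$ is symmetric in $i,j$. With these checks, both directions reduce to the cited results.
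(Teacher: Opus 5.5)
Your proposal is correct and is essentially the argument the paper intends. The paper obtains the corollary by combining the preceding theorem ($p \mid \det A$ rules out any Cartan model of $H_A^*$; otherwise $H_A^* \cong H_{A^T}$) with the characteristic-$p$ isomorphism criterion for $A$ and $A^T$, just as Theorem \ref{thm:dualisomorphcond} was recovered from Theorems \ref{thm:det0notisomorph}, \ref{thm:dualtransposeisomorph} and Corollary \ref{cor:dualisomorphcond}. Invoking the isomorphism theorem directly with $B = A^T$ instead of the preceding Proposition is harmless, since that Proposition is proved exactly this way, and you correctly keep the standing assumption $p \neq 2$.
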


\begin{corollary}
    Let $A$ be a generalised Cartan matrix which is indecomposable modulo $p$, with Weyl group $W$ and Cartan representation $H_A$. Then $H_A$ admits a non-degenerate $W$-invariant bilinear form if and only if $p$ does not divide $\det A$ and $A$ is symmetrisable modulo $p$. 
\end{corollary}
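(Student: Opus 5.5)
The statement to be proved is the last corollary: for $A$ indecomposable modulo $p$ (with $p \neq 2$), $H_A$ admits a non-degenerate $W$-invariant bilinear form if and only if $p \nmid \det A$ and $A$ is symmetrisable modulo $p$. I would reduce this directly to the preceding corollary via Proposition \ref{prop:isomorphiffform}.

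Proposition \ref{prop:isomorphiffform} holds over any field and does not depend on the characteristic. It says that $H_A$ admits a non-degenerate $W$-invariant bilinear form exactly when $H_A \cong H_A^*$. So the statement is equivalent to: $H_A \cong H_A^*$ if and only if $p \nmid \det A$ and $A$ is symmetrisable modulo $p$. That is precisely the previous corollary, so the real work is to make sure that corollary is justified.

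Its proof goes as follows.

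\emph{Case $p \mid \det A$.} The last theorem shows $H_A^*$ is not isomorphic to any Cartan representation of $W$. In particular it is not isomorphic to $H_A$, so no non-degenerate invariant form exists.

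\emph{Case $p \nmid \det A$.} Since $A$ is indecomposable modulo $p$, the same theorem gives $H_A^* \cong H_{A^T}$. Hence $H_A \cong H_A^*$ if and only if $H_A \cong H_{A^T}$. By the characteristic-$p$ isomorphism criterion (the theorem assuming $A$ indecomposable modulo $p$), this holds if and only if the cyclic products of $A$ and $A^T$ agree modulo $p$. Those cyclic products are $A_{i_1i_2}\cdots A_{i_ki_1}$ and $A_{i_2i_1}\cdots A_{i_1i_k}$, so their agreement modulo $p$ is exactly the definition of $A$ being symmetrisable modulo $p$.

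The main point needing care is hypotheses. The preceding proposition about $A$ and $A^T$ assumes ordinary indecomposability, but I would instead apply the isomorphism criterion with $A$ indecomposable modulo $p$, which is what we are given. That criterion also implicitly needs $A$ and $A^T$ to share a Weyl group, which holds because $A_{ij}A_{ji}$ is symmetric in $i,j$. Finally, the restriction $p \neq 2$ must be carried along, since the theorem identifying $H_A^*$ with $H_{A^T}$ relies on the $-1$-eigenspace of each $s_i$ being one-dimensional.
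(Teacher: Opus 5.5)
Your proposal is correct and follows essentially the same route as the paper. The corollary comes from the preceding one ($H_A \cong H_A^*$ iff $p \nmid \det A$ and $A$ is symmetrisable modulo $p$) via Proposition \ref{prop:isomorphiffform}, and that corollary comes from the characteristic-$p$ dual theorem combined with the cyclic-product isomorphism criterion for $A$ and $A^T$. Your choice to invoke that criterion directly in both directions, which needs only indecomposability modulo $p$, is a slight tidying of the paper's hypotheses. You are also right that the restriction $p \neq 2$ must be carried throughout.
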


We can also show using the same argument as in Chapter \ref{chap:6} that if such a form exists, it is necessarily symmetric. 

\chapter{Conclusion}

\label{chap:8}
We here summarise the results proved, and suggest areas in which they can be extended. 

\begin{theorem} Let $A, B$ be indecomposable generalised Cartan matrices, and $k$ a field of characteristic $0$ or finite characteristic $p$. Suppose additionally that if $k$ has characteristic $p$, $A$ is indecomposable modulo $p$. Then the following are equivalent:
\begin{itemize}
    \item $A$ and $B$ are equivalent (if $\mathrm{char} k = 0$) or equivalent modulo $p$ (if $\mathrm{char} k = p)$
    \item All cyclic products (modulo $p$) of $A$ and $B$ are equal.
    \item The Cartan subalgebra representations determined by $A$ and $B$ over $k$ are (of the same Weyl group and) isomorphic. 
\end{itemize}
\end{theorem}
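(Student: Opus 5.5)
The theorem is a summary, so my plan is to assemble it from results already proved, treating characteristic $0$ and characteristic $p$ in parallel. I will show (i)$\Leftrightarrow$(ii) and (ii)$\Leftrightarrow$(iii) separately.

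For characteristic $0$, (i)$\Leftrightarrow$(ii) is exactly Proposition \ref{prop:equivcond}. For (ii)$\Leftrightarrow$(iii), first note that equal cyclic products include the length-two products $A_{ij}A_{ji}=B_{ij}B_{ji}$. By Theorem \ref{thm:weyliscoxeter} this forces the two Weyl groups to coincide. Then Theorem \ref{thm:isomorphcond} gives the isomorphism. Conversely, if the representations are isomorphic representations of the same Weyl group, Theorem \ref{thm:isomorphcond} returns equality of all cyclic products.

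Here one must check that Chapter 5 only assumed agreement of $A_{ij}A_{ji}$ for values in $\{0,1,2,3\}$. The length-two case of Theorem \ref{thm:isomorphcond} then upgrades this to full agreement, so nothing extra is needed.

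For characteristic $p$, (ii)$\Leftrightarrow$(iii) is the theorem of Chapter \ref{chap:7} for $A$ indecomposable modulo $p$. Its proof uses the directed-path argument to define the scalars $\lambda_j$ and Corollary \ref{cor:divisiblecond} for the converse direction. For (i)$\Leftrightarrow$(ii), I would use the proposition characterising equivalence modulo $p$. That proposition requires, in addition to equal cyclic products modulo $p$, that $p\mid A_{ij}$ exactly when $p\mid B_{ij}$.

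The main obstacle is to supply this divisibility condition from (ii) alone. I plan to route through (iii). From (ii) we obtain an isomorphism, and Corollary \ref{cor:divisiblecond} then gives the divisibility agreement, after which the proposition yields (i). In the reverse direction, (i) gives (ii) immediately, because the diagonal factors cancel around a cycle exactly as in the proof of Proposition \ref{prop:equivcond}.

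A second delicate point is the case $p=2$. Here Proposition \ref{prop:isomorphscalarmodp} needs indecomposability modulo $p$, not merely reducibility, to guarantee that $\varphi(\alpha_i)$ is a multiple of $\beta_i$. I would verify that the argument given there only uses the existence of some $j$ with $p\nmid A_{ji}$, which indecomposability modulo $p$ provides.

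Finally, "of the same Weyl group" in (iii) should be read as part of the hypothesis for isomorphism to make sense, and in characteristic $0$ it follows from the $k=2$ cyclic products, as noted above.
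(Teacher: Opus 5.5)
Your overall plan matches the paper's. The summary theorem has no separate proof and is meant to be read off from Proposition \ref{prop:equivcond}, Theorem \ref{thm:isomorphcond} and the Chapter \ref{chap:7} results, and your characteristic-$0$ argument is fine. You are also right that in characteristic $p$, (ii) does not visibly supply the divisibility hypothesis of the proposition on equivalence modulo $p$. The problem is your fix. As a chain of citations it is formally valid, but it becomes circular once the cited proof is opened. In characteristic $p$, the paper's proof of (ii)$\Rightarrow$(iii) gets ``$B$ indecomposable modulo $p$'' from Corollary \ref{cor:divisiblecond}, which presupposes $H_A\cong H_B$. It also never checks the intertwining relation $\lambda_iA_{ij}=\lambda_jB_{ij}$ for pairs with $p\mid A_{ij}$, and that relation holds only if $p\mid B_{ij}$. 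So the divisibility condition is an \emph{input} to (ii)$\Rightarrow$(iii), not something (iii) can hand back to you.

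The missing step is a direct derivation from (ii). It uses only that the mod-$p$ diagram of $A$ (edge $a\to b$ when $p\nmid A_{ba}$) is strongly connected.
\begin{enumerate}
\item Take an edge $a\to b$ and a simple directed path $b=v_1\to\cdots\to v_m=a$. This gives the cyclic product $A_{ba}A_{v_mv_{m-1}}\cdots A_{v_2v_1}$, all of whose factors are units mod $p$. By (ii) the corresponding $B$-product is a unit, so $p\nmid B_{ba}$.
\item Suppose $p\mid A_{ij}$ and take a simple directed path $i=u_1\to\cdots\to u_m=j$. The cyclic product $A_{ij}A_{u_mu_{m-1}}\cdots A_{u_2u_1}$ vanishes mod $p$, hence so does $B_{ij}B_{u_mu_{m-1}}\cdots B_{u_2u_1}$. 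Every factor except $B_{ij}$ is a unit by the first step, so $p\mid B_{ij}$.
\end{enumerate}
This gives $p\mid A_{ij}\Leftrightarrow p\mid B_{ij}$ from (ii) alone. It repairs the paper's proof of (ii)$\Rightarrow$(iii), and it lets you go straight from (ii) to (i) without passing through (iii).

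Two smaller points. First, the equivalence-mod-$p$ proposition is only true when $A$ is indecomposable modulo $p$. The paper's own $p=3$ example has equal cyclic products and equal divisibility patterns, yet no suitable $D$ exists. So the diagonal entries of $D$ must be built along directed paths in $A$'s diagram, as in the Chapter \ref{chap:7} theorem. Second, for $p=2$ the argument of Proposition \ref{prop:isomorphscalarmodp} needs some $j$ with $p\nmid A_{ij}$, so that $s_i\cdot\alpha_j\neq\alpha_j$; you wrote $p\nmid A_{ji}$. Strong connectivity provides both, so this is only an index slip.
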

\begin{theorem} Let $A$ be an indecomposable (or indecomposable modulo $p$) generalised Cartan matrix and $W$ the associated Weyl group. Then the following are equivalent:

\begin{itemize}
    \item $A$ is symmetrisable (modulo $p$)
    \item The cyclic products (modulo $p$) of $A$ and of $A^T$ are the same. 
    \item $A$ is equivalent to $A^T$ (modulo $p$)
    \item The Cartan representation associated to $A$ is isomorphic to that associated to the dual of the Kac-Moody algebra $\mathfrak{g}(A)$.
    \item There exists a $W$-invariant bilinear form on the Cartan representation, which is symmetric and unique up to multiplication by a scalar. 
    \end{itemize}
\end{theorem}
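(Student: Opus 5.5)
The plan is to prove the equivalences in a cycle, reducing to results already established: Lemma \ref{lem:symmcond} (and its mod $p$ analogue), Theorem \ref{thm:isomorphcond} and its finite-characteristic version, Corollary \ref{cor:dualisomorphcond}, and Theorem \ref{thm:invariantformcond} together with the discussion of forms in Chapter \ref{chap:6}.

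\textbf{Equivalence of the first three conditions.} In characteristic $0$, symmetrisable $\Leftrightarrow$ equal cyclic products of $A$ and $A^T$ $\Leftrightarrow$ $A$ equivalent to $A^T$; this is exactly Lemma \ref{lem:symmcond}, combined with Proposition \ref{prop:equivcond} applied to the pair $(A, A^T)$. Modulo $p$, symmetrisability was \emph{defined} by the cyclic product condition, so the first two conditions agree by definition. To get equivalence modulo $p$ I will use the mod $p$ version of Proposition \ref{prop:equivcond}. Its extra hypothesis is that $p \mid A_{ij}$ iff $p \mid A_{ji}$. I would obtain this from indecomposability modulo $p$: if $p \nmid A_{ij}$, then a path back from $i$ to $j$ in the directed Dynkin diagram modulo $p$ closes a cycle. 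Since that cyclic product of $A$ is nonzero mod $p$, the matching product of $A^T$ must be nonzero too, which forces $p \nmid A_{ji}$. This step needs some care, because the cycle need not use distinct vertices; I would decompose it into simple cycles.

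\textbf{The fourth condition.} The dual Kac--Moody algebra has Cartan matrix $A^T$, with the same Weyl group. By Theorem \ref{thm:isomorphcond}, or its mod $p$ version applied with $A$ indecomposable modulo $p$, $H_A \cong H_{A^T}$ iff $A$ and $A^T$ have the same cyclic products. This is precisely Corollary \ref{cor:dualisomorphcond} and its finite-characteristic analogue.

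\textbf{The fifth condition, and the main obstacle.} This is the delicate one, since Theorem \ref{thm:dualisomorphcond} shows the form can be degenerate when $\det A = 0$. So I read "bilinear form" here as nonzero but possibly degenerate. By Proposition \ref{prop:symmform} any $W$-invariant form is symmetric, and by Theorem \ref{thm:invariantformcond} invariance is equivalent to $A_{ij}B_{ii} = 2B_{ij}$.

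If a nonzero such form exists, connectivity forces every $B_{ii} \neq 0$: propagate along edges using $B_{jj} = \frac{A_{ij}}{A_{ji}}B_{ii}$. The lemma following Theorem \ref{thm:invariantformcond} then yields symmetrisability. In characteristic $p$, divisions are only by entries not divisible by $p$, which is exactly where indecomposability modulo $p$ is used.

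Conversely, if $A$ is symmetrisable, I will take $D = \mathrm{diag}(d_i)$ as in Lemma \ref{lem:symmcond}, with $A = DB'$ and $B'$ symmetric, and set $B_{ij} = d_i^{-1}d_j^{-1}\cdot$(appropriately scaled) so that $B_{ii} = 2/d_i$ and $B_{ij} = B'_{ij}$ up to that scaling. Then $A_{ij}B_{ii} = 2B_{ij}$ can be checked directly. This avoids the Kac--Moody machinery, so it works in characteristic $p$ as well.

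Uniqueness up to scalar follows because the recursive construction after Theorem \ref{thm:invariantformcond} determines every $B_{jj}$ and $B_{jl}$ from $B_{ii}$, by propagation along paths. The main obstacle I expect is keeping the hypothesis bookkeeping straight in characteristic $p$: the case $p=2$, the need for paths in both directions, and ensuring no division by zero mod $p$ in the propagation.
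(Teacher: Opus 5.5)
Your proposal is correct. For the first four conditions it is the same assembly the paper uses: Lemma~\ref{lem:symmcond}, Proposition~\ref{prop:equivcond}, Theorem~\ref{thm:isomorphcond}, Corollary~\ref{cor:dualisomorphcond}, and their mod $p$ versions. Your derivation of $p \mid A_{ij} \Leftrightarrow p \mid A_{ji}$ from indecomposability and symmetrisability modulo $p$ supplies a hypothesis the paper leaves implicit when it applies the mod $p$ equivalence criterion to $(A, A^T)$. Taking a \emph{shortest} return path from $i$ to $j$ already makes the closed cycle simple, so no decomposition is needed.

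The genuine difference is in the existence half of the fifth condition. The paper obtains the form by transporting Carter's non-degenerate form on $\mathfrak{h}$ to $\mathfrak{h}^*$ and restricting it to $H$. That only works in characteristic $0$; in characteristic $p$ the paper only reaches \emph{non-degenerate} forms, via the dual-representation argument. You instead write the form down.

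The right choice is simply $B = D^{-1}A$, i.e.\ $B'$ itself, with $B_{ii} = 2/d_i$. Drop the factor $d_i^{-1}d_j^{-1}$, which would break $A_{ij}B_{ii} = 2B_{ij}$ unless all $d_i$ coincide. The check is then one line: $A_{ij}\cdot 2/d_i = 2B'_{ij}$.

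This route is elementary and works verbatim over any field of characteristic $\neq 2$, once $D$ is built modulo $p$ along paths (using exactly the symmetric divisibility you derived). It also gives $\det B = \det A/\det D$, which yields the non-degeneracy criterion of Theorem~\ref{thm:dualisomorphcond} and its mod $p$ analogue without Kac--Moody theory. The paper's route buys the identification of this form with the standard invariant form coming from $\mathfrak{g}(A)$.

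You also correctly fill a step the paper glosses over, when it normalises $B_{ii}=1$: a nonzero invariant form has every $B_{ii} \neq 0$.

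As in the paper, your treatment of the fifth condition needs $p \neq 2$. Proposition~\ref{prop:symmform} and Theorem~\ref{thm:invariantformcond} both divide by $2$. Moreover, in characteristic $2$ the relation $A_{ij}B_{ii} = 2B_{ij}$ forces $B_{ii} = 0$ whenever $A_{ij}$ is odd, so your ``$B_{ii} \neq 0$'' step fails there. That case would have to be handled from \eqref{eq:formcond} directly.
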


\begin{theorem}Let $A$ be a symmetrisable indecomposable generalised Cartan matrix with associated Weyl group $W$. Then the Cartan representation over $\mathbb{R}$ and the reflection representation of $W$ are isomorphic if and only if $A_{ij} A_{ji} \in \left\{0,1,2,3,4\right\}$ for all $i, j$.\end{theorem}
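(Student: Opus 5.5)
The plan is to assemble the statement from the results of Chapter \ref{chap:4}, with $A$ symmetrisable and indecomposable throughout. Let $B$ be the matrix with $B_{ij} = -b_{ij}$, where $b_{ij} = -2(e_i,e_j)$ comes from the Coxeter graph of $W$.

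For the forward direction, suppose $\varphi: H \to R$ is an isomorphism over $\mathbb{R}$. By Proposition \ref{prop:isomorphscalar}, $\varphi(\alpha_i) = \lambda_i e_i$ with every $\lambda_i \neq 0$. Fix $i \neq j$ with $A_{ij} \neq 0$ (so $A_{ji} \neq 0$ too). Lemma \ref{lem:isomorphcond} applied to the pair $(i,j)$ and to the pair $(j,i)$ gives $\lambda_i = \frac{B_{ij}}{A_{ij}}\lambda_j$ and $\lambda_j = \frac{B_{ji}}{A_{ji}}\lambda_i$. Substituting one into the other and cancelling $\lambda_i \neq 0$, using $B_{ij} = B_{ji}$, yields $A_{ij}A_{ji} = b_{ij}^2$. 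By the table at the start of Chapter \ref{chap:4}, $b_{ij}^2 \in \{0,1,2,3,4\}$. The pairs with $A_{ij}A_{ji}=0$ are trivially fine, so $A_{ij}A_{ji} \in \{0,1,2,3,4\}$ for all $i,j$. The only subtle point here is the $\infty$ case: $b_{ij}=2$ forces $A_{ij}A_{ji} = 4$ exactly, which rules out every value $\geq 5$.

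For the converse, assume $A_{ij}A_{ji} \in \{0,1,2,3,4\}$ for all $i,j$. Together with symmetrisability, these are exactly the hypotheses of Theorem \ref{thm:reflectisomorph}, which gives $H \cong R$ directly.

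The main thing to check is that the proof of Theorem \ref{thm:reflectisomorph} really supplies what is needed. It uses Corollary \ref{cor:symmreflectcond}, which says that $B$ and $A$ have matching cyclic products on circuits. I would re-verify this corollary rather than take it on trust. On a circuit, $\prod B_{i_li_{l+1}}$ and $\prod A_{i_li_{l+1}}$ have the same sign, since both are products of $k$ negative numbers. Their squares are $\prod A_{i_li_{l+1}}A_{i_{l+1}i_l}$ and $(\prod A_{i_li_{l+1}})^2$, and these agree by symmetrisability (Lemma \ref{lem:symmcond}). The cyclic products therefore agree, the $\lambda_i$ are well defined, and the map $\varphi(\alpha_i)=\lambda_ie_i$ is the required isomorphism.
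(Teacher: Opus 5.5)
Your proposal is correct and follows the paper's own route. The forward direction is the same compatibility check $A_{ij}A_{ji}=b_{ij}^2$ obtained from Lemma~\ref{lem:isomorphcond} applied in both orders, and the converse is Theorem~\ref{thm:reflectisomorph}. Your sign-plus-squares check of Corollary~\ref{cor:symmreflectcond} (both products have $k$ negative factors, and their squares agree by symmetrisability) also applies verbatim to the closed walks formed by two paths, which is what well-definedness of the $\lambda_i$ needs. It makes explicit the sign point that the paper leaves implicit when it squares both sides in that proof.
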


\begin{theorem}Let $A$ be a symmetrisable indecomposable generalised Cartan matrix with associated Weyl group $W$. Then the following are equivalent: 

\begin{itemize}
\item $\det A \neq 0$.
\item The $W$-invariant bilinear form on the Cartan representation over a field of characteristic zero is non-degenerate. 
\item The Cartan representation is isomorphic to its dual. 
\end{itemize}
\end{theorem}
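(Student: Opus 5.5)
This is the last summary theorem: for $A$ symmetrisable and indecomposable, $\det A \neq 0$ iff the $W$-invariant form on $H$ is non-degenerate iff $H \cong H^*$. The plan is to assemble it from Chapter \ref{chap:6}, routing everything through the invariant form. We work over a field of characteristic zero, where the Kac-Moody machinery of Chapter \ref{chap:6} is available.

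\textbf{Existence and uniqueness of the form.} Since $A$ is symmetrisable, the form $(\cdot,\cdot)$ induced from Carter's non-degenerate form on $\mathfrak{h}$ restricts to a $W$-invariant bilinear form on $H$. By Proposition \ref{prop:symmform}, any $W$-invariant form on $H$ is symmetric. By Theorem \ref{thm:invariantformcond}, it is then determined by the relations $A_{ij}B_{ii} = 2B_{ij}$. Because $A$ is indecomposable, propagating these relations along paths in the Dynkin diagram fixes the form up to a single scalar. So ``the'' $W$-invariant form is well-defined up to scaling, and its non-degeneracy does not depend on which scalar is chosen.

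\textbf{First equivalence: $\det A \neq 0$ iff the form is non-degenerate.} By the lemma $\varphi(h_i) = d_i\alpha_i$ with $d_i \neq 0$, the vectors $\varphi(h_1),\ldots,\varphi(h_n)$ form a basis of $H$. In this basis the Gram matrix of $(\cdot,\cdot)$ is $\langle h_i,h_j\rangle = d_i d_j B_{ij}$, that is $DBD$, where $A = DB$. Hence
\begin{equation*}
\det(DBD) = \det(D)^2 \det B = \det(D)\det(A).
\end{equation*}
This vanishes exactly when $\det A = 0$, since $\det D \neq 0$. The conclusion is the non-degeneracy criterion, with the sign slip in the text (``non-degenerate iff $\det B = 0$'') corrected.

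\textbf{Second equivalence: the form is non-degenerate iff $H \cong H^*$.} By Proposition \ref{prop:isomorphiffform}, $H \cong H^*$ exactly when $H$ carries some non-degenerate $W$-invariant bilinear form. Since the invariant form is unique up to scalar, such a form exists precisely when our form is non-degenerate.

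As a cross-check, independent of Carter's form: Theorem \ref{thm:det0notisomorph} gives $H \not\cong H^*$ when $\det A = 0$. When $\det A \neq 0$, Theorem \ref{thm:dualtransposeisomorph} gives $H^* \cong H_{A^T}$, and Corollary \ref{cor:dualisomorphcond} with symmetrisability gives $H_{A^T} \cong H_A$. This recovers the equivalence of the first and third conditions directly, which is the alternate proof of Theorem \ref{thm:dualisomorphcond}.

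The main obstacle is the determinant bookkeeping in the first equivalence. One must get the Gram matrix in the $\varphi(h_i)$ basis right and track the sign, and must also confirm that $\varphi(h_i)$ lie in $H$ and span it even when $\mathfrak{h}^*$ is larger than $H$. This follows from $\varphi(h_i) = d_i\alpha_i$, and it is where I would check the details most carefully.
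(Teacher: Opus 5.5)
Your proposal is correct and takes essentially the same route as the paper. Both arguments use the symmetry and uniqueness (up to scalar) of the invariant form from Proposition \ref{prop:symmform} and Theorem \ref{thm:invariantformcond}, the Gram matrix in the basis $\varphi(h_i) = d_i\alpha_i$, and Proposition \ref{prop:isomorphiffform}; your cross-check via Theorems \ref{thm:det0notisomorph}, \ref{thm:dualtransposeisomorph} and Corollary \ref{cor:dualisomorphcond} is exactly the paper's alternate proof. Your bookkeeping is also more accurate than the text: the Gram matrix in that basis is $DBD$ (it is $B$ only in the basis $\alpha_i$), and non-degeneracy corresponds to $\det B \neq 0$, not $\det B = 0$, though neither slip affects the conclusion.
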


\begin{theorem}Let $A$ be a generalised Cartan matrix which is symmetrisable and indecomposable modulo $p$ for some odd prime $p$, and $W$ its Weyl group. Then the following are equivalent: 
\begin{itemize}
    \item $p$ does not divide $\det A$.
    \item There exists a non-degenerate $W$-invariant bilinear form on the Cartan representation over a field of characteristic $p$.
    \item The Cartan representation is isomorphic to its dual.
\end{itemize}
\end{theorem}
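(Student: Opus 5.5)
The plan is to assemble this from the characteristic $p$ results of Chapter \ref{chap:7}, following the route of Chapter \ref{chap:6}. The two key inputs are Proposition \ref{prop:isomorphiffform}, which holds in any characteristic, and the theorem above for $p \neq 2$, which says that $H_A^*$ is isomorphic to $H_{A^T}$ when $p \nmid \det A$ and $A$ is indecomposable modulo $p$, and that $H_A^*$ is isomorphic to no Cartan representation when $p \mid \det A$.

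First, Proposition \ref{prop:isomorphiffform} gives the equivalence of the second and third conditions directly.

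Next we show that $p \nmid \det A$ implies $H_A \cong H_A^*$.
\begin{enumerate}
\item The theorem for $p \neq 2$ gives $H_A^* \cong H_{A^T}$. We use here that $p$ is odd and that $A$ is indecomposable modulo $p$.
\item By the proposition preceding that theorem, symmetrisability modulo $p$ gives $H_{A^T} \cong H_A$.
\item Composing the two isomorphisms gives $H_A \cong H_A^*$.
\end{enumerate}

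Conversely, we show that $H_A \cong H_A^*$ implies $p \nmid \det A$, arguing by contrapositive. If $p \mid \det A$, the same theorem says $H_A^*$ is isomorphic to no Cartan representation of $W$. In particular it is not isomorphic to $H_A$ itself. So isomorphism with the dual forces $p \nmid \det A$.

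This closes the cycle of equivalences. It is the same argument as the corollary stated just before, specialised to the symmetrisable case.

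The main point needing care is the direction $p \mid \det A \Rightarrow H_A \not\cong H_A^*$. We must confirm that the argument of Theorem \ref{thm:det0notisomorph} really carries over modulo $p$. In the dual basis, the $-1$-eigenvector of $S_i$ is the $i$-th column of $\overline{A}$, with entries read in $k$. For $p \neq 2$ this eigenvector is unique up to scalar, because $S_i$ has determinant $-1$ and a $1$-eigenspace of dimension $n-1$.

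Now suppose $\psi: H_B \to H_A^*$ is an isomorphism from some Cartan representation. The argument of Proposition \ref{prop:isomorphscalarmodp} with $p$ odd shows that $\psi$ sends each $\beta_i$ to a $-1$-eigenvector of $S_i$. So $\psi(\beta_i)$ is a nonzero multiple of the $i$-th column of $\overline{A}$. When $p \mid \det A$ these columns are linearly dependent over $k$, so the $\psi(\beta_i)$ cannot form a basis, which is a contradiction.

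The step I would double-check most carefully is the use of oddness of $p$. It is needed so that the $-1$- and $1$-eigenspaces are distinct, which in turn makes the eigenvector identification legitimate.
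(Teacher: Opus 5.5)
Your proposal is correct and is essentially the paper's own argument: this summary theorem is the pair of corollaries at the end of Chapter \ref{chap:7}, obtained exactly as you do from Proposition \ref{prop:isomorphiffform}, the $p \neq 2$ theorem on $H_A^*$, and the proposition giving $H_A \cong H_{A^T}$ under symmetrisability modulo $p$. One cosmetic slip in your re-check: the $-1$-eigenvector of $S_i$ has coordinates $A_{i1}, \ldots, A_{in}$ in the dual basis, so it is the $i$-th row of $\overline{A}$ rather than its $i$-th column, which changes nothing since the rows of a square matrix are dependent exactly when its columns are.
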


There are several ways in which this dissertation could have been extended, but time did not allow. Firstly to try and extend the results for finite characteristic to those GCMs which cannot be broken down into indecomposable components. It is likely that a condition on cycles which generalises the cyclic product condition would be necessary.

Secondly, to explore the reflection representation in other fields. This is a problem which is outside of the scope of the text, but we'll note here that it is possible to generalise the definition. The difficulty is that it requires a notion of $2 \cos \theta$ for certain angles $\theta = \frac{\pi}{n}$. But we can write down $2 \cos \theta = e^{i \theta} + e^{-i \theta}$ over a field $k$ of characteristic $0$.

If we take a Coxeter group $W$ generated by $s_1, \ldots, s_k$ such that $n$ is the lowest common multiple of the orders of the products $s_is_j$, then the reflection representation will be well-defined over any extension of $\mathbb{Q}$ which contains $e^{ \frac{2 \pi i}{n}}$, and in fact over the ring of integers $\mathcal{O}(\mathbb{Q}(e^{\frac{2 \pi i}{n}}))$. 

And we can further reduce modulo a prime $p \in \mathbb{Z}$ to get a notion of the reflection representation in characteristic $p$ by considering the reduction by a prime ideal $\mathfrak{p}$ in this ring of integers which lies over $p$.

Thirdly, it would be possible to further integrate this text with the paper of Vinberg (\cite{Vinberg}). This studies more general groups and representations than those explored here, and takes a largely geometric approach, but it briefly states a result that generalises Theorem \ref{thm:isomorphcond} and gives several equivalent conditions relating to symmetrisability.

\bibliographystyle{plainnat}

% Start your bibliography here

% with a bib file:

% or by listing the items
\bibliography{Bibliography}

%\begin{thesisauthorvita}             %% Write your vita here; it can be
%                                     %% anything in LaTeX2e par-mode.
%\end{thesisauthorvita}               %%

\end{document}